\theoremstyle{plain}
\newtheorem{theoremint}{Theorem}
\newtheorem{theorem}{Theorem}[section]
\newtheorem{proposition}[theorem]{Proposition}
\newtheorem{lemma}[theorem]{Lemma}
\theoremstyle{definition}
\newtheorem{definition}[theorem]{Definition}
\newtheorem{remark}[theorem]{Remark}
\newtheorem{example}[theorem]{Example}
\newtheorem{conj}[theorem]{Conjecture}
\DeclareMathOperator{\Ext}{Ext}
\DeclareMathOperator{\Hom}{Hom}
\DeclareMathOperator{\coker}{coker}
\DeclareMathOperator{\ext}{ext}
\DeclareMathOperator{\hh}{h}
\DeclareMathOperator{\HH}{H}
\newcommand{\arr}{\longrightarrow}
\def\Z{\mathbb Z}
\newcommand{\zz}{{\mathbb Z}}
\newcommand{\cc}{{\mathbb C}}
\newcommand{\pp}{{\mathbb P}}
\newcommand\sC{{\mathcal C}}
\newcommand\sE{{\mathcal E}}
\newcommand\sF{{\mathcal F}}
\newcommand\sG{{\mathcal G}}
\newcommand\sI{{\mathcal I}}
\newcommand\sK{{\mathcal K}}
\newcommand\sL{{\mathcal L}}
\newcommand\sN{{\mathcal N}}
\newcommand\sO{{\mathcal O}}
\newcommand\sT{{\mathcal T}}
\newcommand\fD{{\mathfrak D}}
\def\pee#1{\hbox{$ {\mathbb P}^{#1}$}}
  \def \tab#1{\kern #1 truein}
\title[Instanton bundles on the flag variety F(0,1,2)]{Instanton bundles on the flag variety F(0,1,2)}
\author{F. Malaspina, S. Marchesi and J. Pons-Llopis}
\address{Politecnico di Torino, Corso Duca degli Abruzzi 24, 10129 Torino, Italy}
\email{francesco.malaspina@polito.it}
\address{Instituto de Matem\'{a}tica, Estat\'{i}stica e Computa\c{c}ao Cient\'{i}fica – UNICAMP, Rua S\'{e}rgio Buarque de Holanda 651, Distr. Bar\~{a}o Geraldo, 13083-859 Campinas, SP, Brazil}
\email{marchesi@ime.unicamp.br}
\address{DISIM
University of L'Aquila
Via Vetoio, Loc. Coppito
I-67100 L'Aquila, Italy}
\email{juanfrancisco.ponsllopis@univaq.it}
\keywords{Flag variety, instanton bundles, moduli spaces, jumping conics}
\subjclass[2010]{14F05;14J60}
\begin{document}

\maketitle
\begin{abstract}
Instanton bundles on $\mathbb{P}^3$ have been at the core of the research in Algebraic Geometry during the last thirty years. Motivated by the recent extension of their definition to other Fano threefolds of Picard number one, we develop the theory of instanton bundles on the complete flag variety $F:=F(0,1,2)$ of point-lines on $\mathbb{P}^2$. After giving for them two different monadic presentations, we use it to show that the moduli space $MI_F(k)$ of instanton bundles of charge $k$ is a geometric GIT quotient and the open subspace $MI^s_F(k)\subset MI_F(k)$ of stable instanton bundles has a generically smooth component of dim $8k-3$. Finally we study their locus of jumping conics.
\end{abstract}
\section*{Introduction}
One of the classes of vector bundles which has raised most of the attention among algebraic geometers is the class of \emph{instanton bundles}. They were defined as the algebraic counterpart that permitted to solve a central problem in Yang-Mills theory, a gauge theory for non-abelian groups whose aim is to provide an explanation of weak and strong interactions. It generalizes Maxwell's theory of electromagnetism, which can be rephrased as a gauge theory for the abelian group $U(1)$. In the original Yang-Mills theory, the term instanton or pseudo-particle denoted the minimum action solutions of the $SU(2)$ Yang-Mills equations in the $4$-sphere $S^4$. This problem could be rephrased in terms of differential geometry as follows: find all possible connections with self-dual curvature on a smooth $SU(2)$-bundle $\sE$ over the $4$-sphere $S^4$. Instantons come with a ``topological quantum number'' that can be interpreted as the second Chern class $c_2(\sE)$ of the $SU(2)$-bundle on $S^4$ and it is known as the ``charge'' of $\sE$.
A major input to obtain the classification of the set of solutions of the Yang-Mills field equations was given by twistor theory, as it was developed by R. Penrose. The essence of the twistor programme is to encode the differential geometry of a manifold by holomorphic data on some auxiliary complex space (a twistor space). In this setting, it was realized that the original Yang-Mills problem on (anti)self-dual $SU(2)$-connections on a bundle $\sE$ on $S^4$ could be restated in terms of the possible holomorphic structures on the pull-back bundle $\pi^{*}\sE$ on $\pp^3(\mathbb{C})$, where $\pi:\pp^3(\mathbb{C})\rightarrow S^4$ is the projection constructed through twistor theory. This, jointly with the characterization of holomorphic vector bundles on $\pp^3$ as a certain kind of monads due to Horrocks, led to a complete classification of instantons on $S^4$ (cf. \cite{AW}, \cite{AHDM}).

Motivated by the previous theorem, the notion of a \emph{mathematical instanton bundle} $\sE$ with charge $c_2(\sE)=k$ was defined on $\pp^3$  and more recently on an arbitrary \emph{Fano threefold} with Picard number one (cf. \cite{Fa2} and \cite{Kuz}). The existence of instanton bundles have been proved for almost all Fano threefolds of Picard number one. It became a central problem to understand the geometrical properties of their moduli spaces (smoothness, irreducibility, rationality...). These objects are important on their own as well as for the fact that physicists have interpreted various moduli spaces as solution spaces to physically interesting differential equations.

In this paper we propose to extend the study of instanton bundles on other Fano threefolds of higher Picard rank. In particular, we focus our attention to the case of the \emph{Flag variety} $F:=F(0,1,2)\subset\pp^7$ of pairs point--line in $\mathbb P^2$. The main reason for this choice is the following: in \cite{Hit}, Hitchin showed that the only twistor spaces of four dimensional (real) differential varieties which are K\"{a}hler (and \emph{a fortiori}, projective) are $\pp^3$ and the flag variety $F(0,1,2)$, which is the twistor space of $\pp^2$. Indeed, the study of instanton bundles on $F(0,1,2)$ was initiated in \cite{Bu} and \cite{Don}.

An \emph{instanton bundle} $\sE$ on $F$ with charge $k$  (or $k$-instanton) will be defined as a rank $2$ $\mu$-semistable vector bundle with  Chern classes $c_1(\sE)=0$, $c_2(\sE)=kh_1h_2$, $H^0(\sE)=0$ and satisfying the ``instantonic condition" $H^1(\sE(-1,-1))=0$ (see Definition (\ref{matheminst})). We are going to denote by $MI_F(k)$ the set parameterizing isomorphic classes of $k$-instantons. In this paper we show that $MI_F(k)$ has a pleasant geometric structure. For this, we are going to rely on their presentation as the cohomology of a certain monad: recall that, from \cite{Fa2} and \cite{Kuz},  when the derived category of coherent sheaves $D^b(X)$ on the Fano threefold $X$ has a full exceptional collection, instanton bundles on $X$ have a particular nice presentation as the cohomology of a certain monad. In general, this is not true for an arbitrary Fano threefold. Luckily, this is the case for the flag variety $F$ we are dealing with. Therefore we can give two such presentations (linked by a commutative diagram), each one involving members of well-chosen full exceptional collections  of $D^b(F)$. Concerning the first one, we obtain (Theorem \ref{secondmonad}):

\begin{theoremint}
  Let $\sE$ be an instanton bundle with charge $k$ on $F$. Then, up to permutation, $\sE$ is the cohomology of the monad
\begin{equation}\label{mon2} 0 \to \sO_F(-1,0)^{\oplus k}\oplus \sO_F(0,-1)^{\oplus k} \xrightarrow{\alpha}  \sG_1(-1,0)^{\oplus k}\oplus \sG_2(0,-1)^{\oplus k}
\xrightarrow{\beta} \sO_F^{\oplus 2k-2} \to 0.
\end{equation}
\noindent where $\sG_i$ is the pull-back of the twisted cotangent bundle $\Omega_{\pp^2}(2)$ from the two natural projections $p_i:F\subset\pp^2\times\pp^2\arr\pp^2$. Conversely, the cohomology of any such monad gives a $k$-instanton.
\end{theoremint}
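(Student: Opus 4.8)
The plan is to obtain the monad \eqref{mon2} as the degeneration of a Beilinson-type spectral sequence attached to a full exceptional collection on $F$, and to characterize its three terms through cohomology vanishing forced by the instanton hypotheses. Since $F$ is a Fano threefold whose derived category $D^b(F)$ admits a full exceptional collection assembled from the line bundles $\sO_F(a,b)$ and the bundles $\sG_1,\sG_2$ pulled back along the two rulings $p_i\colon F\arr\pp^2$, the resolution of the diagonal $\Delta\subset F\times F$ produces a spectral sequence whose $E_1$-page is assembled from the cohomology groups $H^q(\sE\otimes E_j^{\vee})$ tensored with the members of the dual exceptional collection, and which converges to $\sE$ placed in cohomological degree $0$. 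The whole argument then reduces to computing these cohomology groups and proving that all but three of the graded pieces vanish.

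First I would collect the vanishing statements supplied by the definition. As $\sE$ has rank $2$ and $c_1(\sE)=0$ it is self-dual, $\sE\cong\sE^{\vee}$, so on $F$ (with $\omega_F=\sO_F(-2,-2)$ by adjunction for the incidence divisor of bidegree $(1,1)$ in $\pp^2\times\pp^2$) every vanishing comes in a Serre-dual pair; in particular the instantonic condition $H^1(\sE(-1,-1))=0$ is equivalent to $H^2(\sE(-1,-1))=0$. The $\mu$-semistability kills $H^0$ of all negative twists and, dually, $H^3$ of the positive ones. To propagate these into the full range $H^i(\sE(a,b))$ needed on the $E_1$-page, I would use the defining sequence $0\arr\sO_{\pp^2\times\pp^2}(a-1,b-1)\arr\sO_{\pp^2\times\pp^2}(a,b)\arr\sO_F(a,b)\arr 0$ together with the Künneth formula to reduce the line-bundle cohomology, and the projection formula for $\sG_i=p_i^{*}\Omega_{\pp^2}(2)$ to reduce the remaining computations, all to the classical cohomology of twists of $\Omega_{\pp^2}$ on $\pp^2$.

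With these vanishings in place the spectral sequence collapses onto three antidiagonals, yielding a three-term complex whose cohomology is $\sE$ in the middle. I would then read off the terms: the outer positions are forced to be sums of $\sO_F(-1,0)$, $\sO_F(0,-1)$ and of $\sO_F$, while the middle is built from $\sG_1(-1,0)$ and $\sG_2(0,-1)$. Each multiplicity is pinned down by a single Euler characteristic, computed by Riemann--Roch on $F$, because the surviving group is concentrated in one degree; this yields the exponents $k$, $k$ and $2k-2$ appearing in \eqref{mon2}. The qualifier ``up to permutation'' records the symmetry exchanging the two rulings: swapping $p_1\leftrightarrow p_2$ interchanges $(a,b)\leftrightarrow(b,a)$ and $\sG_1\leftrightarrow\sG_2$, so the monad is determined only once an ordering of the projections is fixed.

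For the converse, beginning from an arbitrary complex of the shape \eqref{mon2} I would check that $\alpha$ is a subbundle injection and $\beta$ is surjective, so that the cohomology $\sE$ is locally free; a rank count gives $\rk\sE=4k-2k-(2k-2)=2$, and the Whitney formula applied to the monad returns $c_1(\sE)=0$ and $c_2(\sE)=kh_1h_2$. The instantonic vanishing is then obtained by twisting the display of the monad by $\sO_F(-1,-1)$ and chasing the resulting long exact sequences against the known cohomology of $\sO_F$, $\sG_i$ and the line bundles, and $\mu$-semistability follows similarly, a destabilizing sub-line-bundle being excluded because it would force a nonzero section of a negative twist of $\sE$ that the display forbids. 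I expect the main difficulty to lie in the second and third steps: assembling the complete list of twisted vanishings and verifying that the two-directional Beilinson sequence degenerates precisely onto the three prescribed terms with no stray contributions, since the exceptional collection is not symmetric in the two rulings and the bookkeeping of the $\sG_1$- and $\sG_2$-strands must be done with care.
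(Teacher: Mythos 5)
Your proposal follows essentially the same route as the paper: the authors also run the Beilinson spectral sequence of Theorem \ref{use} for the exceptional collection (\ref{col1}) with dual collection $\{\sO_F,\sG_2(0,-1),\sG_1(-1,0),\sO_F(0,-1),\sO_F(-1,0),\sO_F(-1,-1)\}$, derive the vanishings from semistability, the instantonic condition and Serre duality, and pin the multiplicities $k$, $k$, $2k-2$ by Euler characteristics, while the converse is verified exactly as you describe by chasing the display of the monad (done in a remark following Theorem \ref{firstmonad}). The only cosmetic difference is that the paper propagates the vanishings through the short exact sequences defining $\sG_1$, $\sG_2$ and Proposition \ref{pLineBundle} rather than via K\"unneth on $\pp^2\times\pp^2$.
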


\noindent The second description is not new; indeed it was already obtained in \cite[Section 3]{Bu}. In order to present it, let us introduce some notation. Let us denote by $U=H^0(\sO_F(1,0))$ the $3$-dimensional vector space of global sections and observe $U^*\cong H^0(\sO_F(0,1))$. Let $W$, $I_1$ and $I_2$ be vector spaces of dimension $4k+2$, $k$ and $k$ respectively. Let $J:W\rightarrow W^*$ be a nondegenerate skew-symmetric form and denote by $Sp(W,J)$ the symplectic group. Associated to any map of vector bundles $\tilde{A}:W\otimes\sO_F\rightarrow (I_1\otimes\sO_F(1,0))\oplus (I_2\otimes\sO_F(0,1))$ we have a map at the level of global sections:
$$
A:W\rightarrow (I_1\otimes U)\oplus (I_2\otimes U^*)
$$

and therefore a matrix (that we denote the same) $A\in W^*\otimes((I_1\otimes U)\oplus(I_2\otimes U^*))$. Let us define:
$$
\fD_k:=\{A\in W^*\otimes((I_1\otimes U)\oplus(I_2\otimes U^*))\mid AJA^t=0\},
$$
\noindent and
$$
\fD_k^0:=\{A\in \fD_k\mid  A \text{ is injective and } \tilde{A}:W\otimes\sO_F\rightarrow (I_1\otimes\sO_F(1,0))\oplus (I_2\otimes\sO_F(0,1))\text{  is a surjective bundle map}\}.
$$

\noindent The group $G_k:=Sp(W,J)\times GL(I_1)\times GL(I_2)$ acts on $\fD_k$ as follows: $(\eta,g_1,g_2)A=\binom{g_1\quad 0}{0\quad g_2}A\eta^t$ and this action clearly descends to $\fD_k^0$. Then, by using again derived category techniques we are able to recover the monad proposed by Buchdahl (see \cite{Bu}, display $(3.9)$, setting $n=2$ and $l=0$) and Donaldson (for charge $k=1$; see \cite{Don}, display $(14)$) associated to a $k$-instanton bundle $\sE$ on the flag $F$:

\begin{equation}\label{mon}
0 \to (I_1^*\otimes\sO_F(-1,0))\oplus(I_2^*\otimes\sO_F(0,-1))  \xrightarrow{J\tilde{A}^t}  W\otimes\sO_F
\xrightarrow{\tilde{A}} (I_1\otimes\sO_F(1,0))\oplus (I_2\otimes\sO_F(0,1))\to 0.
\end{equation}

\noindent  Conversely, the cohomology $\sE$ of such monad is a $k$-instanton bundle  whenever it has no global sections (notice that this is equivalent to $A$ being injective at the level of global sections). Therefore, the moduli space $MI_F(k)$ is the geometric quotient $\fD_k^0/G_k$.

Instanton bundles $\sE$ on $F$ will turn out to be Gieseker semistable.  In particular, we can talk about the open subspace  $MI_F^s(k)\subset MI_F(k)$ of stable $k$-instantons. $MI^s(k)$ can be identified with the open subspace of the Maruyama moduli space $M_F^s(2,0,kh_1h_2)$ of stable rank two bundles with those fixed Chern classes and $H^1(\sE(-1,-1))=0$. Concerning its non-emptiness and main properties we managed to prove by induction the following result (see Theorem \ref{existenceinst}):

\begin{theoremint}
  Let $F\subset\pp^7$ be the flag variety. The moduli space of stable $k$-instanton bundles $MI_F^s(k)$ has a generically smooth irreducible component of dimension $8k-3$.
\end{theoremint}
It is worth to observe that for a large family of Fano threefolds with Picard number one, Faenzi obtained an analogous result in \cite[Theorem A]{Fa2}. Moreover, in the case of
$\pp^3$, much more is known (see \cite{CO}, \cite{T1}, \cite{T2}, \cite{JV}): the moduli space of instantons of arbitrary charge is an affine irreducible smooth variety of dimension $8k-3$. In this paper, we prove the following (see Theorem \ref{affine-two}):
\begin{theoremint}\label{charge1}
The moduli space of $1$-instanton bundles $MI_F(1)$ is an affine irreducible smooth variety of dimension $5$.
\end{theoremint}

\bigskip

For an instanton bundle $\sE$ on the projective space $\pp^3$, the study of the behavior of the restriction of $\sE$ to the lines has played a crucial role. On the flag threefold $F$, we have realized that an analogous attention should be devoted to the study of the restriction of an instanton bundle to the conics. Therefore, we give a proof of the fact that the Hilbert scheme $H:=Hilb^{2t+1}(F)$ parameterizing rational curves of genus $0$ and degree $2$ is again isomorphic to $\pp^2\times\pp^2$. Under this setting, if we denote by $\mathcal D_\sE$ the set of curves on $H$ for which the restriction of $\sE$ is not trivial (the "jumping conics"), we obtain the following result (see Prop. \ref{jumpingdivisor}):

\begin{theoremint}
  Let $\sE$ be a $k$-instanton on $F$. Then $\mathcal D_{\sE}$ is a divisor of type $(k,k)$  equipped with a torsion-free sheaf
$G$
fitting into
\begin{equation}\label{jump}0 \to\sO_H(-1,-1)^{\oplus k}\oplus \sO_H(-1,0)^{\oplus k}\to  \sO_H^{\oplus k}\oplus \sO_H(-1,0)^{\oplus k}\to i_*G\to  0.
\end{equation}
\end{theoremint}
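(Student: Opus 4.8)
The plan is to exhibit $\mathcal D_{\sE}$ as the degeneracy locus of a morphism of vector bundles on $\mathcal H$, obtained by pushing the self--dual monad (\ref{mon}) forward along the universal family of conics. The starting point is the cohomological criterion for jumping: on a smooth conic $C\cong\pp^{1}$ one has $\deg(\sE|_{C})=0$, hence $\sE|_{C}\cong\sO_{\pp^{1}}(a)\oplus\sO_{\pp^{1}}(-a)$, and since $\sO_F(-1,0)$ restricts to $\sO_{\pp^{1}}(-1)$ the restriction fails to be trivial exactly when $h^{0}(C,\sE|_{C}(-1,0))\neq0$; as $\chi(C,\sE|_{C}(-1,0))=0$ this is equivalent to $h^{1}\neq0$. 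Thus the jumping divisor is governed by the relative cohomology of $\rho^{*}(\sE(-1,0))$.

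First I would realize the universal conic as the complete intersection
\begin{equation*}
\tilde\Sigma=\{\,((x,m),(p,\ell))\in\mathcal H\times F\ :\ x\in\ell,\ p\in m\,\},
\end{equation*}
with the two projections $q\colon\tilde\Sigma\arr\mathcal H$ and $\rho\colon\tilde\Sigma\arr F$. Its ideal is cut out by the regular sequence $s_{1}\in\Gamma(\sO_{\mathcal H}(1,0)\boxtimes\sO_{F}(0,1))$ and $s_{2}\in\Gamma(\sO_{\mathcal H}(0,1)\boxtimes\sO_{F}(1,0))$, so $\sO_{\tilde\Sigma}$ is resolved by the associated Koszul complex. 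One checks that the general fibre of $q$ is a smooth conic, while over the divisor $\{x\in m\}$ it degenerates into a reducible conic (one line from each ruling), so that $q$ is flat with fibres of Hilbert polynomial $2t+1$ and $\tilde\Sigma$ is indeed the universal curve.

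The heart of the argument is to apply $Rq_{*}$ to the restriction of (\ref{mon}) twisted by $\sO_F(-1,0)$. Tensoring the Koszul resolution of $\sO_{\tilde\Sigma}$ with the pullback of each monad term and invoking the projection formula reduces every contribution to the line--bundle cohomology $H^{\bullet}(F,\sO_F(a,b))$, which I would compute through the ruling $F\arr\pp^{2}$. The outcome is that the middle term $W\otimes\sO_F(-1,0)$ has vanishing direct images, while the outer terms survive only in a single degree, producing (after relabelling the two factors of $\mathcal H$)
\begin{equation*}
\mathcal A=\sO_{\mathcal H}(-1,-1)^{\oplus k}\oplus\sO_{\mathcal H}(-1,0)^{\oplus k},\qquad \mathcal C=\sO_{\mathcal H}^{\oplus k}\oplus\sO_{\mathcal H}(-1,0)^{\oplus k}.
\end{equation*}
The hypercohomology spectral sequence then collapses to a single differential $d\colon\mathcal A\arr\mathcal C$, so that $Rq_{*}\rho^{*}(\sE(-1,0))$ is computed by the two--term complex $[\mathcal A\xrightarrow{d}\mathcal C]$; this is exactly (\ref{jump}), with $i_{*}G=R^{1}q_{*}\rho^{*}(\sE(-1,0))=\coker d$. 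Taking determinants, $\det d$ is a section of $\sO_{\mathcal H}(k,k)$ whose zero scheme is $\mathcal D_{\sE}$, which is therefore a divisor of type $(k,k)$, and $G$ is torsion--free on it because the degeneration has generic corank one.

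The main obstacle is the behaviour over the degeneration divisor $\{x\in m\}$. The naive product family $F\times\pp^{2}$ has the wrong, non--flat fibres there and yields incorrect twists in $\mathcal A$ (precisely in the summand coming from $\sO_F(-2,0)$); working with the complete intersection $\tilde\Sigma$, i.e.\ with the genuine Koszul resolution, is what corrects this term and forces the collapse to the stated two--term complex. A second point requiring care is to confirm that $d$ is generically an isomorphism, equivalently that $\sE$ admits at least one conic of trivial splitting type, so that $\mathcal D_{\sE}$ is a proper divisor and $G$ a torsion--free sheaf rather than being supported on all of $\mathcal H$; this I would deduce from the $\mu$--semistability of $\sE$, which prevents jumping along every conic.
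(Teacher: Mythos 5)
Your proposal is correct and follows essentially the same route as the paper: both push a monad for $\sE$ forward along the universal conic using the Koszul resolution (\ref{uni}), reduce everything to line-bundle cohomology on $F$, and arrive at the two-term complex $\sO_{\mathcal H}(-1,-1)^{\oplus k}\oplus\sO_{\mathcal H}(-1,0)^{\oplus k}\to\sO_{\mathcal H}^{\oplus k}\oplus\sO_{\mathcal H}(-1,0)^{\oplus k}$ whose cokernel is $i_*G$ and whose determinant cuts out $\mathcal D_{\sE}$ as a divisor of type $(k,k)$. The only deviation is that you feed in the linear self-dual monad (\ref{mon}) where the paper uses (\ref{mon2}) with the bundles $\sG_1,\sG_2$; the resulting complexes agree, so this is a cosmetic variation rather than a different method.
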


This paper is organized in the following way: in the next section, we recall the definition and basic facts of the flag variety $F:=F(0,1,2)$. In Section 2, we introduce the notion of instanton bundle on $F$. In Sections 3 and 4, we study the derived category $D^b(F)$ of coherent sheaves on $F$ and use it to give a monadic presentation of the instanton bundles.     The central part of this paper is Section 5, where we prove the existence of a suitable family of instantons as stated in Theorem 1. Finally, in Section 6, we deal with the notion of jumping conic on the flag variety.

\noindent\textbf{Acknowledgements.}
The authors wish to thank D. Faenzi and G. Sanna for fruitful discussions. The second and third authors would like to thank the Politecnico di Torino and Research Institute for Mathematical Sciences in Kyoto, where parts of this project were developed, for the hospitality and for providing the best working conditions. The authors also want to thank the referees for a careful reading that leaded to an improvement of the paper.

The second author was partially supported by Funda\c{c}\~{a}o de Amparo \`{a} Pesquisa do Estado de S\~{a}o Paulo (FAPESP) grant 2017/03487-9 and by CNPq grant 303075/2017-1. The third author was supported by a JSPS Postdoctoral Fellowship.

\section{Preliminaries}
 We will work over an algebraically closed field $K$ of characteristic zero.

 Let $F\subseteq\mathbb P^7$ be the del Pezzo threefold  of degree $6$, we can also construct $F$ as the general hyperplane section of $\mathbb P^2\times\mathbb P^2$. The projections $\pi_i$ induce maps $p_i\colon F\to\mathbb P^2$ by restriction, $i=1,2$ and such maps are isomorphic to the canonical map $\mathbb P(\Omega_{\mathbb P^2}^1(2))\to\mathbb P^2$. Thinking of the second copy of $\mathbb P^2$ as the dual of the first one, then $F$ can also be viewed naturally as the flag variety of pairs point--line in $\mathbb P^2$. We denote by $A(F)$ the Chow ring of $F$. Let  $h_i$, $i=1,2$, be the respective classes of $p_i^*\sO_{\mathbb P^2}(1)$ in $A^1(F)$. The class of the hyperplane divisor on $F$ is $h=h_1+h_2$.
 For coherent sheaves $\sE,\sF$ on $F$ we are going to denote the twisted sheaf $\mathcal{E}\otimes\mathcal{O}_F(\alpha h_1 + \beta h_2)$ by $\mathcal{E}(\alpha,\beta)$. As usual, $H^i(X,\mathcal{E})$ stands for the cohomology groups, $h^i(X,\mathcal{E})$ for their dimension and analogously $\ext(\sE,\sF)$ will denote the dimension of the $\Ext(\sE,\sF)$ group under consideration.

 The above discussion proves the isomorphisms
$$
A(F)\cong A(\mathbb P^2)[h_1]/(h_1^2-h_1h_2+h_2^2)\cong \mathbb Z[h_1,h_2]/(h_1^2-h_1h_2+h_2^2, h_1^3,h_2^3).
$$
In particular, $Pic(F)\cong\mathbb Z^{\oplus2}$ with generators $h_1$ and $h_2$. We will denote the Chern polynomial of a given coherent sheaf $\sE$ by
$$
c_{\sE}(t):=1 + (\alpha_1 h_1 + \alpha_2 h_2)t + (\beta_1 h_1^2 + \beta_1 h_2^2)t^2 + \gamma h_1^2 h_2t^3,
$$
\noindent where the coefficient of degree $i$ is $c_i(\sE)$.
\\

Recall that $F$ contains two families of lines $\Lambda_1$,$\Lambda_2$, each isomorphic to $\pp^2$. Their representatives in the Chow ring $A(F)$ are $h_1^2$, $h_2^2$. Notice that if we look at $F$ as the projective bundle $\mathbb P(\Omega_{\mathbb P^2}^1(2))\to\mathbb P^2$, these families correspond to the fibers over points of $\pp^2$. We have a geometrical description (using the notion of flag variety): given $p\in\pp^2$, $\lambda_p:=\{L\in\pp^{2\vee}\mid p\in L\}\in\Lambda_1$. Analogously, given a line $L\subset\pp^2$, $\lambda_L:=\{x\in\pp^2\mid x\in L \}\in\Lambda_2$. Notice $\lambda_x\cap\lambda_y=\emptyset$ if $x\neq y$ (clear from cohomological product $h_1^2h_2^2=0$) and $\lambda_x\cap\lambda_L=\emptyset$ (resp. $\{x,L\}$) if $x\in l$ (resp. $x\notin L$). If $L_1$ (resp. $L_2$) is a line from the family $\Lambda_1$ (resp. $\Lambda_2$) it holds that
$$
\sO_F(\alpha,\beta)\otimes\sO_{L_1}\cong\sO_{\pp^1}(\beta) \quad\text{(resp.}\sO_F(\alpha,\beta)\otimes\sO_{L_2}\cong\sO_{\pp^1}(\alpha))
$$

\noindent since $h_1^2(\alpha h_1+\beta h_2)=\beta h_1^2h_2$. The $\sO_F$-resolutions of a line $L_1$ is:

\begin{equation}\label{res-line}
  0 \longrightarrow \sO_F(-2, 0) \longrightarrow  \sO_F(-1,0)^2 \longrightarrow \sO_F \longrightarrow \sO_{L_1} \longrightarrow 0;
\end{equation}
\noindent (or the analogous one for the second family of lines $L_2$).

In order to compute the $\sO_F$-resolution of a point $p\in F$, we can consider its $\sO_L$-resolution

$$
 0 \longrightarrow \sO_L(-1) \longrightarrow  \sO_L \longrightarrow \sO_p\longrightarrow 0
$$
and use the mapping cone construction to conclude that

\begin{equation}\label{res-point}
   \begin{array}{rccccl}
& \sO_F(-2,0) & &\sO_F(-1,0)^2  \\
0 \longrightarrow \sO_F(-3,-1) \longrightarrow  & \oplus & \longrightarrow  & \oplus & \longrightarrow & \sO_F \longrightarrow \sO_p \longrightarrow 0.\\
& \sO_F(-2,-1)^2 & & \sO_F(-1,-1)
\end{array}\end{equation}



The flag variety $F$ also contains a family of conics $C$ (see the next subsection) whose $O_F$-resolution is:

\begin{equation}\label{res-conic}
  \begin{array}{rcl}
& \sO_F(-1,0) \\
0 \longrightarrow \sO_F(-1,-1) \longrightarrow  & \oplus & \longrightarrow \sO_F \longrightarrow \sO_C \longrightarrow 0.\\
& \sO_F(0,-1)
\end{array}
\end{equation}

\noindent We have to distinguish two different cases (see Remark \ref{rmk-hilbert-conics}). In the case of a smooth conic $C\cong\pp^1$, it holds:
$$
\sO_F(\alpha,\beta)\otimes\sO_{C}\cong\sO_{\pp^1}(\alpha+\beta)
$$
which will be denoted either by $\sO_C(\alpha,\beta)$ or $\sO_C(\alpha+\beta)$, to remember we are restricting at a conic.\\
In the case of a reducible conic $C = L_1 \cup L_2$, we will always use the notation $\sO_C(\alpha,\beta)$ to keep track of the degree on each one of the lines.

We now compute the Hirzebruch-Riemann-Roch formula for a coherent sheaf $\sE$ of rank $r$ on the flag variety $F$.
Recall, that denoting by $T_F$ the tangent bundle of the flag, we have
$$
c_1(T_F) = 2 (h_1 + h_2), \:\:\:\: c_2(T_F)=6h_1h_2
$$
and we compute the Todd polynomial
$$
\begin{array}{cl}
td(T_F)  & = 1 + \frac{1}{2}c_1 + \frac{1}{12}(c_1^2 + c_2) + \frac{1}{24} c_1 c_2 =\vspace{0.3cm}\\
& = 1 + (h_1 + h_2) + \frac{3}{2} h_1h_2 + \underbrace{1}_{=h_1^2h_2}
\end{array}
$$
Remembering that the Chern character is given by
$$
ch(\sE) = r + c_1 +\frac{1}{2}(c_1^2 - 2c_2) + \frac{1}{6}(c_1^3 - 3 c_1c_2 + 3c_3)
$$
we can finally obtain
\begin{equation}\label{eulerchar}
\chi(\sE) = r + \frac{3}{2} c_1 h_1 h_2 + \frac{1}{2}(c_1^2 - 2 c_2)(h_1+h_2) + \frac{1}{6}(c_1^3 - 3 c_1c_2 + 3c_3)
\end{equation}

In particular, for a rank $2$ vector bundle $\sE$ with Chern classes $c_1(\sE)=0$ and $c_2(\sE)=kh_1h_2$ it holds:

\begin{equation}\label{dim-chi-end}
  \chi(\sE\otimes\sE^{\vee})=4-8k.
\end{equation}
\noindent This formula will be useful later on to understand the dimension of moduli spaces of instanton bundles. Moreover, we will need to understant how the Chern classes
of a coherent sheaf vary under twists. This is provided by the following Lemma (see \cite[Prop. 5.17]{Eisenbud-Harris}, where it is stated for vector bundles, but it can be extended to arbitrary coherent sheaves because of the universal nature of such formulas, as in \cite[Lemma 2.1]{Har}).

\begin{lemma}\label{twist-chern-classes}
  Let $\sE$ be a coherent sheaf of rang $r$ on the flag variety $F$ and let $\sL$ be a line bundle. Then
  $$
  c_k(\sE\otimes\sL)=\sum_{i=0}^{k}\binom{r-k+i}{i}c_{1}(\sL)^ic_{k-i}(\sE).
  $$

\end{lemma}

Gathering the previous information, we can compute the Chern polynomial of the coherent sheaf $\sO_C(1)$, $C$ being a conic on $F$, which will be the right candidate to use the induction process explained in Section \ref{sec-ind}. First of all, using that the Chern polynomial is multiplicative on exact sequences, we obtain from the resolution (\ref{res-conic}) of $\sO_C$ that
$$
c_{\sO_C}(t)=1-(h_1^2+h_2^2)t^2+2h_1^2h_2t^3.
$$
\noindent Therefore, applying Lemma \ref{twist-chern-classes}, we obtain $c_{\sO_C(1)}(t)=1-(h_1h_2)t^2$.

Finally, we want to understand the Chern classes of a reflexive coherent sheaf $\sE$ (i.e., $\sE^{\vee\vee}\cong\sE$) of rank two on $F$. The case of $\pp^3$ has been studied in \cite{Har} but the results there can be easily adapted to our context (or, as a matter of fact, to an arbitrary smooth threefold $X$). So let $\sE$ be a rank two reflexive sheaf on $F$. By the Auslander-Buchsbaum formula, $\sE$ has homological dimension one, namely, there exists a resolution by locally free sheaves of the form:

\begin{equation}\label{locfreeres}
  0\rightarrow\sF_0\rightarrow\sF_1\rightarrow\sE\rightarrow 0.
\end{equation}
Taking duals, we have:

\begin{equation}\label{locfreeres2}
  0\rightarrow\sE^{\vee}\rightarrow\sF_1^{\vee}\rightarrow\sF_0^{\vee}\rightarrow \mathcal{E}xt^1(\sE,\sO_F)\rightarrow 0.
\end{equation}
\noindent $\mathcal{E}xt^1(\sE,\sO_F)$ is a coherent sheaf supported on the finite set of $n$ points where $\sE$ is not locally free. We need therefore the following

\begin{lemma}
  Let $\sG$ be a coherent sheaf on $F$ supported on a finite set of points of length $n$. Then $c_{\sG}(t)=1+2nt^3$.
\end{lemma}
\begin{proof}
  By induction on the length $n$ and multiplicity of the Chern polynomial, we can suppose that $\sG\cong\sO_p$ for a point $p\in F$. But then a straightforward computation, using the $\sO_F$-resolution (\ref{res-point}) of $\sO_p$, gives the statement. Alternatively, we could use the formulas from Lemma \ref{twist-chern-classes}, the Hirzebruch-Riemann-Roch formula recalled in (\ref{eulerchar}) and the fact that $\sG$ does not change under twist by a line bundle to see that $c_1(\sG)=c_2(\sG)=0$ and
  $$
  h^0(\sG)=\chi(\sG)=\frac{1}{2}c_3(\sG).
  $$
\end{proof}

\noindent Therefore, combining the previous Lemma, the multiplicity of the Chern polynomials on the exact sequences (\ref{locfreeres}) and (\ref{locfreeres2}) and the fact that for a locally free sheaf $\sF$ it holds $c_{\sF^{\vee}}(t)=c_{\sF}(-t)$ we obtain:
$$
c_{\sE^{\vee}}(t)=c_{\sE}(-t)(1+2nt^3).
$$
\noindent To conclude, from Lemma \ref{twist-chern-classes} and the fact that $\sE^{\vee}\cong\sE(-c_1(\sE))$, we obtain the following lemma:

\begin{lemma}\label{Chern-reflexive-flag}
  Let $\sE$ be a reflexive coherent sheaf of rank two on $F$. Then $c_3(\sE)=h^0(\mathcal{E}xt^1(\sE,\sO_F))\geq 0$ counts the number of points for which $\sE$ is not locally free. In
  particular, $c_3(\sE)=0$ if and only if $\sE$ is a locally free sheaf.
\end{lemma}



We will now recall how to compute the cohomology of the line bundles on $F$ (see \cite{CFM3} Proposition 2.5):
 \begin{proposition}
\label{pLineBundle}
For each $\alpha_1,\alpha_2\in\mathbb Z$ with $\alpha_1\le \alpha_2$, we have $$
h^i\big(F,\sO_F(\alpha_1,\alpha_2)\big)\ne0
$$
if and only if
\begin{itemize}
\item $i=0$ and $\alpha_1\ge0$;
\item $i=1$ and $\alpha_1\le -2$, $\alpha_1+\alpha_2+1\ge0$;
\item $i=2$ and $\alpha_2\ge0$, $\alpha_1+\alpha_2+3\le0$;
\item $i=3$ and $\alpha_2\le -2$.
\end{itemize}
In all these cases
$$
h^i\big(F,\sO_F(\alpha_1,\alpha_2)\big)=(-1)^i\frac{(\alpha_1+1)(\alpha_2+1)(\alpha_1+\alpha_2+2)}{2}.
$$
\end{proposition}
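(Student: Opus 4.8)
The plan is to split the statement into a purely numerical part, the Euler characteristic, and a \emph{concentration} part, namely that for each $(\alpha_1,\alpha_2)$ at most one of the groups $H^i(F,\sO_F(\alpha_1,\alpha_2))$ is nonzero. Once both are established the dimension formula is forced: in each of the four regions the surviving group has dimension $(-1)^i\chi$, and inspecting the sign of the factored $\chi$ obtained below shows that the four regions are pairwise disjoint and coincide exactly with the locus where $\chi\neq0$ (on the complementary locus $\chi=0$ and, by concentration, all cohomology vanishes).

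First I would compute $\chi(\sO_F(\alpha_1,\alpha_2))$ from the Riemann--Roch formula \eqref{eulerchar}. This needs the intersection numbers on $F$: from the relations $h_1^2-h_1h_2+h_2^2=0$, $h_1^3=h_2^3=0$ together with $h^3=6$ one gets $h_1^2h_2=h_1h_2^2=1$. Feeding $c_1=\alpha_1h_1+\alpha_2h_2$, $c_2=c_3=0$ into \eqref{eulerchar} yields, after a short computation,
$$\chi\big(\sO_F(\alpha_1,\alpha_2)\big)=\frac{(\alpha_1+1)(\alpha_2+1)(\alpha_1+\alpha_2+2)}{2},$$
which is exactly the claimed value of $(-1)^ih^i$. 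Thus the entire statement reduces to concentration together with the identification of the four regions.

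For the latter I would proceed in three steps. The $H^0$ case is elementary: restricting to a fibre of $p_2$ (on which $\sO_F(\alpha_1,\alpha_2)$ restricts to $\sO_{\pp^1}(\alpha_1)$) gives $p_{2*}\sO_F(\alpha_1,\alpha_2)=0$, hence $H^0=0$, as soon as $\alpha_1<0$, while for $\alpha_1\ge0$ one has $H^0\neq0$ by restricting a section from $\pp^2\times\pp^2$. Since $c_1(T_F)=2(h_1+h_2)$ gives $K_F=\sO_F(-2,-2)$, Serre duality $H^i(\sO_F(\alpha_1,\alpha_2))\cong H^{3-i}(\sO_F(-\alpha_1-2,-\alpha_2-2))^*$ turns the $H^0$ statement into the $H^3$ statement ($H^3\neq0$ iff $\alpha_2\le-2$) and reduces $H^2$ to $H^1$. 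Finally, for $H^1$ I would use the hyperplane exact sequence on $P:=\pp^2\times\pp^2$,
$$0\to\sO_P(\alpha_1-1,\alpha_2-1)\to\sO_P(\alpha_1,\alpha_2)\to\sO_F(\alpha_1,\alpha_2)\to0,$$
whose long exact sequence, combined with the vanishing of the odd cohomology of line bundles on $P$ (K\"unneth, since $H^1(\pp^2,\sO(a))=0$ for all $a$), identifies $H^1(\sO_F(\alpha_1,\alpha_2))$ with the kernel of the multiplication map $H^2(\sO_P(\alpha_1-1,\alpha_2-1))\to H^2(\sO_P(\alpha_1,\alpha_2))$ induced by the defining $(1,1)$-section.

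The crux, and the step I expect to be the main obstacle, is to show that this multiplication map has maximal rank; equivalently, that the cohomology of $\sO_F(\alpha_1,\alpha_2)$ is concentrated in a single degree. Granting maximal rank, the dimensions of source and target, computed by K\"unneth from the cohomology of line bundles on $\pp^2$, show the kernel is nonzero exactly when $\alpha_1\le-2$ and $\alpha_1+\alpha_2+1\ge0$, which is the asserted $H^1$ region (the cokernel of the same map gives $H^2$, so concentration of $H^1,H^2$ follows at once, and the $H^2$ region then matches by Serre duality). The conceptual reason concentration must hold is that $F=F(0,1,2)$ is the complete flag variety $SL_3/B$, so Bott's theorem guarantees that the cohomology of any line bundle is concentrated in the single degree $\ell(w)$ determined by the Weyl-group element making $\lambda+\rho$ dominant, vanishing altogether when $\lambda+\rho$ is singular. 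One may therefore either invoke Borel--Weil--Bott directly, obtaining both concentration and the dimension in one stroke, or prove the maximal-rank statement by hand from the explicit nondegenerate section cutting out $F$.
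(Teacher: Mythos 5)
Your argument is correct, but note that the paper does not actually prove this proposition: it is quoted verbatim from \cite{CFM3}, Proposition 2.5, so there is no internal proof to compare against. Your proposal therefore supplies a genuine, self-contained proof, and its ingredients all check out. The intersection numbers $h_1^2h_2=h_1h_2^2=1$ follow as you say from the Chow ring relations together with $\deg F=6$, and substituting $c_1=\alpha_1h_1+\alpha_2h_2$, $c_2=c_3=0$ into the Riemann--Roch formula of Section 1 does give $\chi=\tfrac{1}{2}(\alpha_1+1)(\alpha_2+1)(\alpha_1+\alpha_2+2)$. The reduction to concentration plus identification of the four regions is clean: the $H^0$ region via $p_{2*}$, the $H^3$ region and the $H^2$-to-$H^1$ reduction via Serre duality with $\omega_F=\sO_F(-2,-2)$, and the $H^1$ region via the kernel of the multiplication map on $H^2$ of $\pp^2\times\pp^2$ (the K\"unneth dimension count does match the region $\alpha_1\le-2$, $\alpha_1+\alpha_2+1\ge0$). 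The one step you rightly flag as the crux --- maximal rank of that multiplication map, equivalently concentration --- is legitimately discharged by Borel--Weil--Bott, since $F(0,1,2)=SL_3/B$ and $\sO_F(\alpha_1,\alpha_2)$ are exactly the homogeneous line bundles; indeed, invoking Bott directly would give concentration, the degree, and the dimension (the Weyl dimension formula reproduces the displayed cubic) in one stroke, making the hyperplane-section analysis redundant. Either way the proof is complete; the only caveat is that if one insists on avoiding Bott, the maximal-rank claim still needs an argument (e.g.\ an explicit monomial-basis computation with the nondegenerate $(1,1)$-form $x_0y_0+x_1y_1+x_2y_2$), and your write-up leaves that alternative route as a sketch.
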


\subsection{Hilbert scheme of lines and conics on the Flag variety}

It could be thought that the study of the geometry of lines of the $F$ will be important to define and understand instanton bundles, as it turned out to be in the case of instanton bundles on $\pp^3$. Nevertheless, in the case of the flag variety, the main kind of rational curve we are interested in is the conic. In fact, through the Ward correspondence, instanton bundles on $F$ have trivial splitting on "real" conics (this is explained in \cite{Bu} and \cite{Don} without explicitly mentioning the degree) and therefore, by semicontinuity, on the general element of $H:=Hilb^{2t+1}(F)$. Therefore, we devote this subsection to study the main properties of the conics on $F$.

\begin{lemma}
The Hilbert scheme of rational curves of degree two $H:=Hilb^{2t+1}(F)$ is isomorphic to $\pee2\times\pee2$. The open set $\pee2\times\pee2\backslash F$ corresponds to smooth conics. Moreover, the canonical map $p:\sC\rightarrow F$ from the universal conic $\sC$ to $F$ endows $\sC$ with the structure of a quadric bundle of relative dimension $2$ over $F$.
\end{lemma}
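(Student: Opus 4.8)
The plan is to exhibit every degree-two genus-zero curve on $F$ as a complete intersection of one divisor from each ruling and to parametrize these by a product of two planes. First I would pin down the numerical class: with $h=h_1+h_2$ and both $h_i$ nef, a connected reduced curve $C$ with $C\cdot h=2$ and $p_a(C)=0$ satisfies $C\cdot h_1,\,C\cdot h_2\ge0$; the classes $2h_1^2$ and $2h_2^2$ are excluded because, e.g., a curve of class $2h_1^2$ has $p_1(C)$ a point and therefore lies in a single fibre $\cong\pp^1$ of $p_1$, contradicting its class, so necessarily $[C]=h_1h_2$. For such a $C$ the images $\ell:=p_1(C)$ and $m:=p_2(C)$ are lines in $\pp^2$ and $\pp^{2\vee}$, and $C\subseteq p_1^{-1}(\ell)\cap p_2^{-1}(m)$; since $p_1^{-1}(\ell)\in|\sO_F(1,0)|$ and $p_2^{-1}(m)\in|\sO_F(0,1)|$ meet in a curve of class $h_1h_2$, this containment is an equality, so $C=C_{\ell,m}:=p_1^{-1}(\ell)\cap p_2^{-1}(m)=F\cap(\ell\times m)$. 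This suggests building, over $P:=\pp^{2\vee}\times\pp^2$ parametrizing the pairs $(\ell,m)$, the incidence family $\sC\subset P\times F$ cut out by the two tautological conditions; a fibrewise computation gives $\chi(\sO_{C_{\ell,m}}(t))=2t+1$, so $\sC\to P$ is flat and yields a classifying morphism $\Phi:P\to M$.

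Next I would prove $\Phi$ is an isomorphism. It is bijective: injectivity holds because $(\ell,m)=(p_1(C),p_2(C))$ is recovered from $C$, and surjectivity is the previous paragraph. To promote this to an isomorphism I would construct the inverse directly, applying $p_1$ and $p_2$ to the fibres of the universal conic $\sC\to M$ to obtain two flat families of lines, hence morphisms $M\to\pp^{2\vee}$ and $M\to\pp^2$ whose product $\Psi:M\to P$ is inverse to $\Phi$. To record the smoothness and dimension of $M$ I would compute the normal bundle of a smooth conic: as the complete intersection in $F$ of $p_1^{-1}(\ell)\in|\sO_F(1,0)|$ and $p_2^{-1}(m)\in|\sO_F(0,1)|$, it has
\[
N_{C/F}\cong\sO_C(1,0)\oplus\sO_C(0,1)\cong\sO_{\pp^1}(1)^{\oplus2},
\]
so that $h^0(N_{C/F})=4=\dim P$ and $h^1(N_{C/F})=0$, and $M$ is smooth of dimension $4$.

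For the stratification I would write $m$ as the pencil of lines through a point $q\in\pp^2$ and study the projection $C_{\ell,m}\to\ell$: over $x\in\ell$ the fibre is the unique line $\overline{xq}$ when $x\neq q$, while over $x=q$ it is the whole pencil. Hence $C_{\ell,m}$ is a smooth conic when $q\notin\ell$ and degenerates to $\lambda_q\cup\lambda_\ell$ exactly when $q\in\ell$; under the identification $P\cong\pee2\times\pee2$ this locus $\{q\in\ell\}$ is precisely the incidence copy of $F$, so $\pee2\times\pee2\setminus F$ parametrizes the smooth conics. Finally, for the universal conic I would describe $\sC=\{((\ell,m),(x,L))\mid x\in\ell,\ q\in L\}$ and project to $F$: over $(x,L)$ the fibre is $\{\ell\mid x\in\ell\}\times\{q\mid q\in L\}\cong\pp^1\times\pp^1$. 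Globally $\sC$ is the fibre product over $F$ of the $\pp^1$-bundle of lines through the point and the $\pp^1$-bundle of points on the line, which exhibits $p:\sC\to F$ as a bundle whose fibres are the smooth quadric surface $\pp^1\times\pp^1$, i.e. a quadric bundle of relative dimension $2$.

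The routine parts are the class determination, the Koszul computation of $N_{C/F}$, and the fibrewise identification $\pp^1\times\pp^1$. The genuine obstacles I expect are, first, turning $\Psi$ into an honest morphism — i.e. checking that the $p_i$-images of the fibres of $\sC\to M$ form flat families of lines, and handling the scheme structure of $M$ uniformly at the degenerate conics, where $N_{C/F}$ is only a sheaf and one must still verify $h^0=4$, $h^1=0$ (or else simply invoke the inverse $\Psi$); and second, upgrading the fibrewise quadric statement to a genuine quadric-bundle structure by identifying $\sC$ with the fibre product of the two natural $\pp^1$-bundles, so that no fibre degenerates.
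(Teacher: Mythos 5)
Your proposal is correct and follows essentially the same route as the paper: both parametrize the conics as $F\cap(\ell\times m)$ for a pair of lines $(\ell,m)\in\pp^{2\vee}\times\pp^{2\vee}\cong\pp^2\times\pp^2$, and both identify the singular conics with the incidence locus $F\subset\pp^2\times\pp^2$. The paper's version is far terser — it asserts the bijection with the quadrics $L_1\times L_2$ and the tangency criterion without your flatness, normal-bundle, or fibre-product verifications, and it rules out non-reduced members of $Hilb^{2t+1}(F)$ only in the remark that follows (via $\sN_{L|F}\cong\sO_L^{\oplus 2}$ and a lemma from \cite{KPS}), a point your argument likewise leaves implicit.
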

\begin{proof}
  Quadric surfaces $Q\cong\pee1\times\pee1\subset\pee2\times\pee2\subset\pee8$ are parameterized again by a $\pee2\times\pee2$. Indeed, in order to define a quadric, we need to choose a pair of lines $(L_1,L_2)\in\mathbb{P}^{2 \vee}\times\mathbb{P}^{2 \vee}\cong\pee2\times\pee2$. $Q_{(L_1,L_2)}:=L_1\times L_2\subset\pee2\times\pee2$ is the associated quadric. Now, it is immediate to see that any quadric in $\pee2\times\pee2$ defines uniquely a conic $C_{(L_1,L_2)}:=F\cap Q_{(L_1,L_2)}$ in the hyperplane section $F$. Moreover, one can check that $Q_{(L_1,L_2)}$ is tangent to $F$ (and therefore $C_{(L_1,L_2)}$ is singular) if and only if $(L_1,L_2)\in F\subset\pee2\times\pee2$.
\end{proof}

\begin{remark}\label{rmk-hilbert-conics}
  Indeed, it is known (see for instance \cite[Lemma 2.1.1]{KPS}) that any subscheme of $F$ with Hilbert polynomial $2t+1$ will be a smooth conic, a pair of distinct lines intersecting on a point, or a line with a double structure. In order to see that there is no such non-reduced subscheme on $F$ we should observe that for any line $L$ on $F$, we have $\sN_{L|F}\cong\sO_L^2$. Therefore there is no surjective map $\sN_{L|F}^{\vee}\to\sO_{L}(-1)$ and we conclude again by \cite[Lemma 2.1.1]{KPS}.
\end{remark}

\begin{remark}
  An explicit parametrization of the conics is as follows, for $(p,L)\in\pee2\times\mathbb{P}^{2 \vee}\backslash F$, the subscheme of F:
  $$
  C_{(p,L)}:=\{(q,S)\mid L(q)=S(p)=0\}
  $$
  is a smooth conic in $F$. Its class in the Chow ring is $h_1h_2$. On the other hand, if we choose a point $(p,l)\in F$ the associated curve $C_{(p,l)}$ will be the union of the lines $L_{p}:=\{(p,S)\mid s(p)=0\}\in h_1$ and $L_l:=\{(q,l)\mid l(q)=0\}\in h_2$ intersecting at the point $(p,l)$.
\end{remark}

One more argument to explain why the role of the line is taken over by conic in the flag is the following
\begin{proposition}
  Given two generic points of $F$, there exists exactly one smooth conic passing through them.
\end{proposition}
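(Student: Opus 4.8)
The plan is to convert the statement into a pair of incidence conditions in $\pp^2$ by means of the explicit parametrization of conics recorded just above. Write the two given points of $F$ as flags $(p_1,\ell_1)$ and $(p_2,\ell_2)$, so $p_i\in\pp^2$, $\ell_i\in\pp^{2\vee}$ and $p_i\in\ell_i$. A smooth conic is precisely some $C_{(p_0,L_0)}$ with $(p_0,L_0)\in\pp^2\times\pp^{2\vee}\setminus F$, and by the parametrization $C_{(p_0,L_0)}$ passes through $(p_i,\ell_i)$ if and only if $p_i\in L_0$ and $p_0\in\ell_i$. Hence a smooth conic through both given points corresponds to a pair $(p_0,L_0)\notin F$ for which $L_0$ contains $p_1,p_2$ and $p_0$ lies on $\ell_1,\ell_2$. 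As the two points are distinct and in general position, $p_1\ne p_2$ and $\ell_1\ne\ell_2$, so $L_0$ is forced to be the joining line $\overline{p_1p_2}$ and $p_0$ the meeting point $\ell_1\cap\ell_2$. This already produces a unique candidate pair, so both existence and uniqueness reduce to the single assertion that this $(p_0,L_0)$ lies off $F$, i.e. that the resulting conic is smooth.

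The decisive point is this smoothness check $p_0\notin L_0$, which I would settle by a one-line computation in homogeneous coordinates. Representing points and lines by vectors in $K^3$, the joining line and the meeting point are the cross products $L_0=p_1\times p_2$ and $p_0=\ell_1\times\ell_2$, and the incidence pairing of $p_0$ against $L_0$ is governed by the Cauchy--Binet identity
$$
\langle p_0,L_0\rangle=(\ell_1\times\ell_2)\cdot(p_1\times p_2)=(\ell_1\cdot p_1)(\ell_2\cdot p_2)-(\ell_1\cdot p_2)(\ell_2\cdot p_1).
$$
The flag incidences $p_1\in\ell_1$ and $p_2\in\ell_2$ read $\ell_1\cdot p_1=\ell_2\cdot p_2=0$, so the first term drops and $\langle p_0,L_0\rangle=-(\ell_1\cdot p_2)(\ell_2\cdot p_1)$. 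Therefore $p_0\in L_0$ exactly when $p_2\in\ell_1$ or $p_1\in\ell_2$: the unique conic through the two flags is singular precisely when the point of one flag lies on the line of the other. For two general points neither cross-incidence holds, so $(p_0,L_0)\notin F$ and $C_{(p_0,L_0)}$ is smooth, giving existence; uniqueness is immediate, since any smooth conic through both points must carry the same forced parameter $(p_0,L_0)=(\ell_1\cap\ell_2,\ \overline{p_1p_2})$, and the parameter determines the conic.

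I expect the main obstacle to be exactly this smoothness/degeneracy analysis, since it is where the non-generic configurations intrude and the argument must locate them precisely. The computation above pins the singular locus down as $\{p_2\in\ell_1\}\cup\{p_1\in\ell_2\}$, to which one must add the loci $p_1=p_2$ and $\ell_1=\ell_2$, where $(p_0,L_0)$ is no longer determined and a whole pencil of (necessarily reducible) conics appears; all of these are proper closed conditions on the pair of points. Away from them the assignment $(p_1,\ell_1),(p_2,\ell_2)\mapsto(\ell_1\times\ell_2,\ p_1\times p_2)$ is a well-defined morphism landing in the smooth-conic locus $\pp^2\times\pp^{2\vee}\setminus F$, and it exhibits one and only one smooth conic through the two points, as asserted.
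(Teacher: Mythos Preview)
Your approach is essentially the same as the paper's: both identify the unique candidate conic as $C_{(p_0,L_0)}$ with $p_0=\ell_1\cap\ell_2$ and $L_0=\overline{p_1p_2}$ via the explicit parametrization. The paper's proof consists of a single sentence giving this construction and nothing more; your argument is considerably more thorough, and in particular your smoothness check via the Cauchy--Binet identity is a genuine addition, since the paper never verifies that $(p_0,L_0)\notin F$.

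Your analysis also surfaces a point the paper glosses over: the statement as written (``any two points'') is not literally true. Your computation shows $p_0\in L_0$ exactly when $p_2\in\ell_1$ or $p_1\in\ell_2$, and in these cross-incidence configurations (as well as when $p_1=p_2$ or $\ell_1=\ell_2$) the unique candidate degenerates and no smooth conic through both points exists. You correctly restrict to general pairs; the paper's subsequent remark (``if we allow singular conics, two more cases appear'') hints that the authors are aware of the issue but did not adjust the proposition's hypothesis. So your proof is correct and follows the paper's route, while being more careful about exactly when the conclusion holds.
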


\begin{proof}
  Given two points $(p_i,L_i)\in F$, $i=1,2$, if we define $q:=L_1\cap L_2$, $S:=\overline{p_1,p_2}$, the conic $C_{(q,S)}$ meets the requirements.
\end{proof}
\begin{remark}
 Notice that if we allow singular conics, two more cases appear.
\end{remark}

\section{Instantons. definition and first properties}
In this section we will define the notion of instanton bundle on the flag variety $F$. Such definition is given through requirements on cohomology vanishing, trivial splitting and fixed Chern classes. As we will see in Section \ref{sec:monad}, such characterization does not lead to a \emph{linear monad}, i.e. a monad involving only the structure and hyperplane bundles of the flag variety.

\begin{definition}\label{matheminst}
For any integer $k\geq 1$ we will call a \emph{mathematical instanton bundle} with charge $k$  (or, for short, a $k$-instanton) a rank two $\mu$-semistable bundle $\sE$ on $F$ with $H^0(\sE)=0$, $c_1(\sE)=(0,0)$, $c_2(\sE)=kh_1h_2$ and $H^1(\sE(-1,-1))=0$. Its Hilbert polynomial is

\begin{equation}\label{hilbertpolyn}
\chi(\sE(t,t))= (t+1)(2t^2+4t+2(1-k)).
\end{equation}
\end{definition}

The interest in (mathematical) instanton bundles is rooted in a crucial problem in Yang-Mills theory (see  \cite{Bu}, \cite{Don} for details). It turns out that the flag variety $F$ is the twistor space of the projective plane $\pp^2$. The fibers of the map defining the twistor structure $F\rightarrow \pp^2$ are conics (namely smooth rational curves of degree $2$ with respect to $\sO_F(1,1)$) that cover the whole of $F$. We are going to call them \emph{real conics}.  As in the case of $\pp^3$, there is a one-to-one correspondence between the anti-self-dual solutions of the Yang-Mills equations over $\pp^2(\mathbb{C})$ and vector bundles $\sE$ on $F$ such that (see \cite[Theorem 2]{Bu}):
\begin{itemize}
  \item they split trivially on real conics: $\sE_{\mid C}\cong \sO_C\oplus\sO_C$ for any real conic $C\in Hilb^{2t+1}(F)$;
  \item they admit an isomorphism $\phi:\sE\rightarrow\sigma^*(\overline{\sE}^{\vee})$, where $\sigma:F\rightarrow F$ is an anti-holomorphic involution (the \emph{reality structure} on $\sE$).
\end{itemize}
\noindent The vector bundles on $F$ satisfying the two previous conditions are called \emph{real instanton bundles}.

\begin{remark}\label{remarkglobalsections}
\begin{enumerate}
  \item[i)] For Fano threefolds of Picard number one an analogous definition, extending the case of the projective space $\pp^3$, was given in \cite{Fa2} and \cite{Kuz}. In the case of the flag variety $F$, however, we have to allow properly $\mu$-semistable instanton bundles to deal with higher Picard number. We are going to characterize in a moment the properly $\mu$-semistable instantons on $F$.
  \item[ii)] The presentation we gave of the Hilbert polynomial of $\sE$ shows that $\chi(\sE(-1,-1))=0$. Indeed, using Serre's duality, it holds $H^i(F,\sE(-1,-1))=0$ for $i=0,\dots,3$.
  \item[iii)] It can be easily proven, following the computations in \cite[page 176]{OSS}, that Gieseker and $\mu$-stability are equivalent for our bundles.
  \item[iv)] It is worthwhile to point out that the condition $H^0(\sE)=0$ does not follow from the rest of the conditions defining an instanton bundle. Indeed, if we take the smooth curve $C:=\bigcup_{i=1}^k L_i\cup \bigcup_{i=1}^k L_i'$ for $L_i$ (resp. $L_i'$) $k$ lines from the first (resp. the second) family of lines on $F$ such that $L_i\cap L_j'=\emptyset$ for $i\neq j$, Serre construction gives a vector bundle $\sE$ sitting on the exact sequence
      $$
      0\rightarrow\sO_F\rightarrow \sE\rightarrow\sI_{C\mid F}\rightarrow 0,
      $$
      with $H^0(\sE)\neq 0$ but satisfying the rest of conditions for defining a $k$-instanton bundle.
\end{enumerate}
\end{remark}

Mathematical instanton bundles have trivial splitting type on generic conics.

\begin{proposition}\label{trivial-conic}
  Let $\sE$ be a $k$-instanton on $F$. Then for a generic conic $C\in M:=Hilb^{2t+1}(F)$, $\sE_C\cong\sO_C^2$.
\end{proposition}

\begin{proof}
We are going to use the main result from \cite{Hir}. Since the three hypothesis of the main Theorem from that paper are satisfied, we can conclude that for a general (smooth) conic $C\in Hilb^{2t+1}(F)$, if $\sE_C\cong\sO_C(-i)\oplus\sO_C(i)$ then $2i\leq -\mu_{min}(\sT_{\sC \mid F}\mid C)$ where $\sT_{\sC\mid F}$ is the relative tangent bundle of the canonical projection $p:\sC\rightarrow F$ and $\mu_{min}$ is the minimal slope of the Harder-Narasimhan filtration of its restriction to $C$. Again, by \cite[(6.3)]{Hir}, $\sT_{\sC \mid F}\mid C$ can be identified with the kernel of the evaluation map $\phi:H^0(C,\sN_{C\mid F})\otimes\sO_C\rightarrow \sN_{C\mid F}$ where $\sN_{C\mid F}$ is the normal bundle of the generic conic $C$ in $F$. Now, as we see from the resolution (\ref{res-conic}), since $C$ is the complete intersection of divisors of type $\sO_F(1,0)$ and $\sO_F(0,1)$, it turns out that $\sN_{C\mid F}\cong \sO_C(1)^2$ and therefore the kernel of the evaluation map $\phi$ is $\sO_C(-1)^2$. Therefore, $i=0$ and $C$ has trivial splitting type.

\end{proof}


In the next proposition, we show that the two traditional notions of (semi-)stability coincide for an instanton bundle and we get the following characterization of properly semistable instanton bundles; but first, in order to check the (semi)-stability of rank $2$ real instanton bundles, let us recall the following version of the Hoppe's criterion (see \cite{JMPS}):
\begin{lemma}[Hoppe's criterion]
A rank $2$ vector bundle $\sE$ on $F$ with first Chern class $c_1(\sE)=0$ is $\mu$-(semi)stable if and only if $H^0(F,\sE(p,q))=0$ for any $p,q\in \zz$ such that $p+q\leq (<) 0$.
\end{lemma}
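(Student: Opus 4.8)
The plan is to reduce the stability condition, which a priori quantifies over all subsheaves, to a condition on line subbundles, and then to reinterpret an inclusion of a line bundle as a nonzero global section of a twist of $\sE$. Since $\rk\sE=2$ and $\mu(\sE)=\frac12 c_1(\sE)\cdot h^2=0$, any destabilizing subsheaf has rank $1$. Given a rank $1$ subsheaf $\sF\subset\sE$, which is torsion-free as a subsheaf of the locally free $\sE$, I would pass to its reflexive hull $\sF^{\vee\vee}$. Because $F$ is smooth, hence locally factorial, $\sF^{\vee\vee}$ is a line bundle, and as $Pic(F)=\zz h_1\oplus\zz h_2$ it must equal $\sO_F(a,b)$ for some $a,b\in\zz$. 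The inclusion $\sF\hookrightarrow\sE$ induces, after applying $(-)^{\vee\vee}$ and using $\sE^{\vee\vee}=\sE$, a morphism $\sO_F(a,b)\to\sE$ that agrees with the original inclusion generically; its kernel is therefore a torsion subsheaf of the torsion-free $\sO_F(a,b)$, hence zero, so the morphism is injective. Moreover $\sF\hookrightarrow\sF^{\vee\vee}$ has cokernel supported in codimension $\ge2$, so $c_1(\sF)=c_1(\sF^{\vee\vee})$ and the slope is unchanged. Thus $\sE$ fails to be stable (resp. semistable) exactly when it admits an inclusion $\sO_F(a,b)\hookrightarrow\sE$ with $\mu(\sO_F(a,b))\ge0$ (resp. $>0$).

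Next I would carry out the slope computation in $A(F)$. Using $h_1^2-h_1h_2+h_2^2=0$ one obtains $(h_1+h_2)^2=3h_1h_2$ and $h_1^2h_2=h_1h_2^2$ equal to the point class, so
$$
\mu(\sO_F(a,b))=(a h_1+b h_2)\cdot(h_1+h_2)^2=3(a+b).
$$
Consequently $\mu(\sO_F(a,b))\ge0\iff a+b\ge0$ and $\mu(\sO_F(a,b))>0\iff a+b>0$, so the destabilizing data is governed entirely by the sign of $a+b$.

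Finally I would translate line subbundles into sections. An injection $\sO_F(a,b)\hookrightarrow\sE$ is the same as a nonzero element of $\Hom(\sO_F(a,b),\sE)\cong H^0(F,\sE(-a,-b))$, every nonzero morphism out of a line bundle into the torsion-free sheaf $\sE$ being automatically injective by the generic-injectivity argument above. Setting $(p,q)=(-a,-b)$, the conditions become $a+b\ge0\iff p+q\le0$ and $a+b>0\iff p+q<0$. Hence $\sE$ is not stable iff $H^0(F,\sE(p,q))\ne0$ for some $p+q\le0$, and not semistable iff $H^0(F,\sE(p,q))\ne0$ for some $p+q<0$; taking contrapositives gives the stated criterion. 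The main obstacle is the reduction to line subbundles: one must justify that the reflexive hull is a genuine line bundle (local factoriality of the smooth $F$) and that replacing $\sF$ by $\sF^{\vee\vee}$ leaves the slope intact, while scrupulously tracking the boundary case that separates the stable ($\le$) version from the semistable ($<$) version.
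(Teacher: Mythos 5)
Your argument is correct. The paper does not actually prove this lemma---it is quoted directly from \cite{JMPS}---so there is no internal proof to compare against; what you have written is the standard derivation of Hoppe's criterion in rank $2$: reduce to rank-one subsheaves, pass to the reflexive hull to obtain a line bundle (valid because $F$ is smooth, hence locally factorial, and $Pic(F)\cong\zz^{\oplus 2}$), compute $\mu(\sO_F(a,b))=3(a+b)$ from $h^2=3h_1h_2$, and translate injections of line bundles into global sections of twists. All the delicate points---generic injectivity of the induced map $\sO_F(a,b)\to\sE$, invariance of $c_1$ (hence of the slope) under double dualization, and the bookkeeping of the boundary case distinguishing stability from semistability---are handled correctly; the one caveat worth recording is that the equivalence genuinely uses $\rk\sE=2$, since for higher rank Hoppe's criterion gives only a sufficient condition.
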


\begin{proposition}\label{gieseker}
Let $\sE$ be a $k$-instanton on $F$. Then, it is also Gieseker semistable. Moreover, if $\sE$ is not $\mu$-stable, then $k=l^2$ for some $l\in \zz,l\neq 0$ and it can be constructed as an extension $\Lambda_l$ of the form

\begin{equation}\label{stabquad}
0\to\sO_F(l,-l)\to \sE\to \sO_F(-l,l)\to 0.
\end{equation}

\noindent The only common element of the two families of extensions $\Lambda_l$ and $\Lambda_{-l}$ is the decomposable bundle $\sO_F(l,-l)\oplus\sO_F(-l,l)$.
\end{proposition}

\begin{proof}

Suppose that the $k$-instanton bundle $\sE$ is not $\mu$-stable.  We deduce that there exists a positive integer $l$ such that $H^0(\sE(l,-l))\not=0$ and that $\sE$ sits on an exact sequence of the form:
\begin{equation}\label{aaaaa}
  0\to\sO_F\to \sE(l,-l)\to \sI_Y(2l,-2l)\to 0,
\end{equation}

where $Y\subseteq F$. Given that $H^0(F,\sE(l-1,-l))=H^0(F,\sE(l,-l-1))=0$, $Y$ is purely $2$-codimensional or empty. In order to see that the we are actually dealing with the latter case, notice that, for $C\subset F$ a conic, $C\cap Y=Z$ with $z:=length(Z)$, the restriction of the previous exact sequence to $C$ would be of the form
$$
0\to\sO_C\to \sE(l,-l)_{\mid C}\to \sO_Z\oplus\sO_C(-z)\to 0,
$$
or, in other words, the splitting of $\sE$ on $C$ is of the form $\sE_C\cong\sO_C(-z)\oplus\sO_C(z)$. Namely, $\sE$ has non-trivial splitting type on $C$ exactly when the intersection $C\cap Y$ is non-empty. On the other hand, notice that $Y$ is represented in the Chow ring $A(F)$ by $(k-l^2)h_1h_2$ and therefore the divisor on the Hilbert space $H=\pp^2\times\pp^2$ of conics intersecting $Y$ is of type $\sO_H(k-l^2,k-l^2)$. Since we will see (cf. Proposition \ref{jumpingdivisor}) that the set of jumping conics of a $k$-instanton bundle is of type
$\sO_H(k,k)$ we conclude that $l=0$ and get a contradiction with the fact that $H^0(\sE)=0$. Therefore $Y=\emptyset$ and $\sE$ sits on a short exact sequence of the announced form
\begin{equation}\label{stabquad}
0\to\sO_F(l,-l)\to \sE\to \sO_F(-l,l)\to 0,
\end{equation}
\noindent with $l^2=k\neq 0$. So $\sE$ is also Gieseker semistable. To prove the last assertion, let us suppose that there exists a $l^2$-instanton $\sE$ that fits at the same time in the extensions $\Lambda_l$ and $\Lambda_{-l}$. Then easily we could construct maps:
$$
\sO(-l,l)\oplus\sO(l,-l)\stackrel{(\alpha  \beta)}{\longrightarrow}\sE\stackrel{\binom{\delta}{\gamma}}{\longrightarrow}\sO(-l,l)\oplus\sO(l,-l)
$$
\noindent such that $\binom{\delta}{\gamma}(\alpha \beta)=\binom{Id\quad 0}{0 \quad Id}$ and therefore $\sE\cong\sO_F(l,-l)\oplus\sO_F(-l,l)$.

\end{proof}

\begin{lemma}\label{simple}
  Any $k$-instanton $\sE$ vector bundle on $F$ distinct from $\sO_F(l,-l)\oplus\sO_F(-l,l)$ is simple, namely, $End(\sE)=\mathbb{C}$. In particular, they carry an unique symplectic structure $\phi:\sE\rightarrow\sE^{\vee}$, $\phi^{\vee}=-\phi$.
\end{lemma}

\begin{proof}
  Since it is well-known that stable vector bundles are simple, let $\sE$ be a properly semistable instanton, $\sE\ncong\sO_F(l,-l)\oplus\sO_F(-l,l)$. Then it fits in an exact sequence of the form $\Lambda_l$ for a unique $0 \neq l\in\zz$. Tensoring it with $\sE$ and using that we have just seen that $h^0(F,\sE(-l,l))=1$ and $h^0(F,\sE(l,-l))=0$ we get, since $\sE\cong \sE^{\vee}$, $End(\sE)=H^0(F,\sE\otimes\sE)=\cc$. For the last statement, any nonzero two-form $\omega\in\bigwedge^2\sE\cong\sO$ defines equivalent symplectic structures on $\sE$.
\end{proof}

We are going to end this section showing that real instanton bundles are indeed mathematical instantons.

\begin{lemma}
  A real instanton bundle on $F$ is a mathematical instanton.
\end{lemma}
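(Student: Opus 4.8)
The goal is to show that a \emph{real} instanton bundle $\sE$ on $F$ (rank two, $H^1(\sE(-1,-1))=0$, $c_2(\sE)=kh_1h_2$, and $\sE_{\mid C}\cong\sO_C\oplus\sO_C$ on every real conic) is a \emph{mathematical} instanton. Comparing the two definitions, the only missing ingredients are that $c_1(\sE)=(0,0)$ and that $\sE$ is $\mu$-semistable. The plan is to extract both of these from the triviality of the restriction to real conics, using Hoppe's criterium for the stability part.

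First I would establish $c_1(\sE)=(0,0)$. Write $c_1(\sE)=ah_1+bh_2$. Restricting to a real conic $C$ (whose class is $h_1h_2$, so $h_i\cdot C$ computes the degree of $\sO_F(h_i)_{\mid C}$ on the rational curve $C\cong\pp^1$), the triviality $\sE_{\mid C}\cong\sO_C\oplus\sO_C$ forces $\deg(c_1(\sE)_{\mid C})=0$. Using the intersection numbers $h_1\cdot h_1h_2 = h_1^2h_2$ and $h_2\cdot h_1h_2=h_1h_2^2=h_1^2h_2$ in $A(F)$ (both equal to the point class via $h_1^2-h_1h_2+h_2^2=0$), this gives $a+b=0$. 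To pin down $a$ and $b$ individually, I would restrict instead to the lines of the two families $\Lambda_1,\Lambda_2$ (classes $h_1^2,h_2^2$): a real conic degenerates into a pair of such lines, and triviality on conics specializes to triviality on each component line, forcing both $a=0$ and $b=0$ separately. Hence $c_1(\sE)=(0,0)$.

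Next I would verify $\mu$-semistability via Hoppe's criterium, i.e.\ show $H^0(F,\sE(p,q))=0$ whenever $p+q<0$ (semistability requires $\le 0$ is not needed for a \emph{strict} destabilizer, but one checks $<0$ for semistability). Suppose some nonzero section $s\in H^0(\sE(p,q))$ with $p+q\le 0$ existed. Restricting to a general real conic $C$, the restriction $s_{\mid C}$ lives in $H^0(C,\sE(p,q)_{\mid C})=H^0(\sO_C(p+q)^{\oplus 2})$, since $\sE_{\mid C}$ is trivial and $\sO_F(p,q)_{\mid C}\cong\sO_C(p+q)=\sO_{\pp^1}(p+q)$. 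When $p+q<0$ this group vanishes, so $s$ vanishes on every general conic; as these conics sweep out $F$ (any two points lie on a smooth conic, by the Proposition above), $s\equiv 0$, a contradiction. This yields semistability directly. For the borderline $\mu$-slope-zero case the argument shows any section would have to be constant along every conic, which is the phenomenon producing the strictly semistable instantons the authors announce treating separately; I would remark that this is consistent with allowing $\mu$-semistable (not necessarily stable) bundles in Definition~\ref{matheminst}.

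Finally, the remaining conditions transfer verbatim: $H^1(\sE(-1,-1))=0$ and $c_2(\sE)=kh_1h_2$ are shared hypotheses, and once $c_1=(0,0)$ and $\mu$-semistability are in hand, the Hilbert polynomial formula follows from the Hirzebruch--Riemann--Roch computation \eqref{eulerchar} specialized to $r=2$, $c_1=0$, $c_2=kh_1h_2$, $c_3=0$. The main obstacle I anticipate is the stability step, specifically controlling the borderline slope-zero sections: triviality on conics gives vanishing cleanly for $p+q<0$, but the semistable (not stable) nature of these bundles means I must be careful not to over-claim stability, and the sweeping-conics argument must genuinely cover a general point of $F$ so that a section vanishing on the general conic vanishes identically. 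The rest is routine intersection theory on $F$ using the relation $h_1^2-h_1h_2+h_2^2=0$.
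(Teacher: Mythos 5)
Your proof follows essentially the same route as the paper's: the paper also verifies $\mu$-semistability via Hoppe's criterium by restricting a putative section of $\sE(p,q)$ with $p+q<0$ to a real conic, where it becomes a section of $\sO_C(p+q)^{\oplus 2}=0$, and then concludes from the fact that $F$ is covered by real conics. The only difference is that you spell out why $c_1(\sE)=(0,0)$ (via intersection with the conic class and degeneration to line pairs), a point the paper dismisses as ``clearly satisfied.''
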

\begin{proof}
Let $\sE$ be a real instanton bundle on $F$. The condition $c_1(\sE)=0$ is clearly satisfied. On the other hand, it is proved in \cite[Lemma 4]{Bu} that $H^1(\sE(-1,-1))=0$ and also $c_2(\sE)=kh_1h_2$ for $k\geq 1$.  To conclude, let us show that $\sE$ is $\mu$-semistable, using the Hoppe's criterion: if $s\in H^0(F,\sE(p,q))$ with $p+q<0$, for $C\subset F$ an arbitrary real conic, $s_{\mid C}$ is a global section of $\sE_{\mid C}(p,q)\cong\sO_C(p+q)^2$, i.e., $s_{\mid C}=0$. Since $F$ is covered by real conics we see that $s$ should be the zero section and therefore we are done. Finally, the proof of $H^0(\sE)=0$ follows the lines of the proof of Proposition \ref{gieseker}. Indeed, a non-zero section of $\sE$ would give a short exact sequence of the form (\ref{aaaaa}) with $l=0$ and $Y$ a codimension two subscheme. But then, tensoring this short exact sequence with the structure sheaf $\sO_C$ for $C$ a real conic such that $Y\cap C\neq \emptyset$ would contradict that $\sE$ has trivial splitting type on $C$. Indeed, notice that in this argument the crucial point is that $F$ is \emph{covered} by conics with trivial splitting type.

\end{proof}

\section{The derived category of the flag variety $F=F(0,1,2)$}\label{sec:dercat}
In this section we will recall how to mutate exceptional collections and the Beilinson results, both needed to see the instanton bundles as cohomology of a  certain monad.

Given a smooth projective variety $X$, let $D^b(X)$ be the the bounded derived category of coherent sheaves on $X$.
An object $E \in D^b(X)$ is called exceptional if $\Ext^\bullet(E,E) = \mathbb C$.
A set of exceptional objects $E_1, \ldots, E_n$ is called an exceptional collection if $\Ext^\bullet(E_i,E_j) = 0$ for $i > j$.
An exceptional collection is full when $\Ext^\bullet(E_i,A) = 0$ for all $i$ implies $A = 0$, or equivalently when $\Ext^\bullet(A, E_i) = 0$ for all $i$ also implies $A = 0$.

\begin{definition}\label{def:mutation}
Let $E$ be an exceptional object in $D^b(X)$.
Then there are functors $\mathbb L_{E}$ and $\mathbb R_{E}$ fitting in distinguished triangles
$$
\mathbb L_{E}(T) 		\to	 \Ext^\bullet(E,T) \otimes E 	\to	 T 		 \to	 \mathbb L_{E}(T)[1]
$$
$$
\mathbb R_{E}(T)[-1]	 \to 	 T 		 \to	 \Ext^\bullet(T,E)^* \otimes E	 \to	 \mathbb R_{E}(T)	
$$
The functors $\mathbb L_{E}$ and $\mathbb R_{E}$ are called respectively the \emph{left} and \emph{right mutation functor}.
\end{definition}

The collections given by
\begin{align*}
E_i^{\vee} &= \mathbb L_{E_0} \mathbb L_{E_1} \dots \mathbb L_{E_{n-i-1}} E_{n-i};\\
^\vee E_i &= \mathbb R_{E_n} \mathbb R_{E_{n-1}} \dots \mathbb R_{E_{n-i+1}} E_{n-i},
\end{align*}
are again full and exceptional and are called the \emph{right} and \emph{left dual} collections. The dual collections are characterized by the following property; see \cite[Section 2.6]{GO}.
\begin{equation}\label{eq:dual characterization}
\Ext^k(^\vee E_i, E_j) = \Ext^k(E_i, E_j^\vee) = \left\{
\begin{array}{cc}
\mathbb C & \textrm{\quad if $i+j = n$ and $i = k $} \\
0 & \textrm{\quad otherwise}
\end{array}
\right.
\end{equation}

\begin{theorem}[Beilinson spectral sequence]\label{thm:Beilinson}
Let $X$ be a smooth projective variety with a full exceptional collection $\langle E_0, \ldots, E_n\rangle$ of objects for $D^b(X)$. Then for any $A$ in $D^b(X)$ there is a spectral sequence
with the $E_1$-term
\[
E_1^{p,q} =\bigoplus_{r+s=q} \Ext^{n+r}(E_{n-p}, A) \otimes \mathcal H^s(E_p^\vee )
\]
which is functorial in $A$ and converges to $\mathcal H^{p+q}(A)$.
\end{theorem}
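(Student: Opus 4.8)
The plan is to realize the spectral sequence as the hypercohomology spectral sequence attached to a Postnikov decomposition of $A$ induced by the collection, with the right dual collection $\{E_i^\vee\}$ supplying the ``coordinates''. The only substantive input is the characterization \eqref{eq:dual characterization} of the dual collection, together with the semiorthogonal decomposition $D^b(X)=\langle\langle E_0\rangle,\dots,\langle E_n\rangle\rangle$ furnished by a full exceptional collection.

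Concretely, I would first pass to the diagonal. Since $\langle E_0,\dots,E_n\rangle$ is full and exceptional on $X$, the object $\sO_{\Delta}\in D^b(X\times X)$ admits a resolution (a Postnikov tower) whose graded pieces are the external products $E_{n-p}\boxtimes E_p^\vee$ for $p=0,\dots,n$; this is the geometric incarnation of \eqref{eq:dual characterization}, the key point being that the pairing $\Ext^k(E_{n-p},E_p^\vee)=\mathbb C$ is concentrated in the single degree $k=n-p$, which identifies $\{E_p^\vee\}$ as the dual ``basis'' to $\{E_{n-p}\}$. I would build this tower by induction on the length of the collection, peeling off one object at a time by means of the triangles defining $\mathbb L_E$ and $\mathbb R_E$ in Definition \ref{def:mutation}.

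Next I would transport this resolution to $X$ through the identity functor $\mathrm{id}\simeq\mathrm{pr}_{2*}(\mathrm{pr}_1^*(-)\otimes\sO_{\Delta})$. Applying the Fourier--Mukai functor with kernel $E_{n-p}\boxtimes E_p^\vee$ to $A$ returns $\Ext^\bullet(E_{n-p},A)\otimes E_p^\vee$, so the tower for $\sO_{\Delta}$ yields a filtration of $A$ whose $p$-th graded piece is $\Ext^\bullet(E_{n-p},A)\otimes E_p^\vee$, placed with the cohomological shift dictated by the degree $n-p$ in which the pairing lives. Passing to cohomology sheaves and using that tensoring by a complex of $K$-vector spaces is exact, so that Künneth holds termwise, the $E_1$-page of the associated spectral sequence is $\mathcal H^{p+q}$ of the $p$-th graded piece, that is $\bigoplus_{r+s=q}\Ext^{n+r}(E_{n-p},A)\otimes\mathcal H^s(E_p^\vee)$. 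Convergence to $\mathcal H^{p+q}(A)$ is precisely the statement that the tower reconstructs $A$, and functoriality in $A$ is inherited from the Fourier--Mukai formalism.

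The hard part will be the bookkeeping of cohomological shifts: one must place the graded piece indexed by $p$ in exactly the right spot of the Postnikov tower so that the shift $\Ext^{n+r}$ and the convergence degree $p+q$ come out as stated, and check that the signs and conventions in \eqref{eq:dual characterization} remain compatible throughout. The existence of the diagonal resolution with graded pieces $E_{n-p}\boxtimes E_p^\vee$ --- equivalently, the statement that $\{E_i\}$ and $\{E_i^\vee\}$ are a dual pair of generating collections of $D^b(X)$ --- is the only genuinely geometric ingredient; once it is in place, the remainder is formal homological algebra.
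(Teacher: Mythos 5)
The paper does not actually prove Theorem \ref{thm:Beilinson}: it is quoted from the literature, with the proof delegated to Rudakov, Gorodentsev--Kuleshov and B\"ohning, so your sketch has to be measured against those sources rather than against anything in the text. It is a correct outline of one of the two standard arguments, but not the one those references use. The cited proofs work entirely inside $D^b(X)$: the full exceptional collection gives a semiorthogonal decomposition, hence a canonical Postnikov tower of $A$ itself whose graded pieces are $\Ext^\bullet(E_{n-p},A)\otimes E_p^\vee$ with the appropriate shifts, and the spectral sequence is the one attached to this filtered object via the standard t-structure; only the characterization (\ref{eq:dual characterization}) is needed. You instead decompose $\sO_{\Delta}$ on $X\times X$ and transport the filtration to $A$ by Fourier--Mukai, which is the classical Beilinson argument for $\pp^n$ and is valid for any smooth projective $X$, but it costs an extra input the abstract route avoids: you must know that the external products of the two collections form a full exceptional collection on $X\times X$, so that $\sO_{\Delta}$ really admits the claimed tower. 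You should also be a little more careful with the kernel: with kernel $B\boxtimes C$ the transform of $A$ is $R\Gamma(A\otimes B)\otimes C$, so to produce $\Ext^\bullet(E_{n-p},A)\otimes E_p^\vee$ the first factor must be the derived dual $E_{n-p}^{*}$, not $E_{n-p}$ itself. Neither point is a gap in substance, and the shift bookkeeping you flag is where the work lies in either version; but if you want the proof in the form the paper relies on, the purely triangulated-category Postnikov argument is shorter and is what you will find in the cited references.
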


The statement and proof of Theorem \ref{thm:Beilinson} can be found both in  \cite[Corollary 3.3.2]{RU}, in \cite[Section 2.7.3]{GO} and in \cite[Theorem 2.1.14]{BO}.


Let us assume next that the full exceptional collection  $\langle E_0, \ldots, E_n\rangle$ contains only pure objects of type $E_i=\mathcal E_i^*[-k_i]$ with $\mathcal E_i$ a vector bundle for each $i$, and moreover the right dual collection $\langle E_0^\vee, \ldots, E_n^\vee\rangle$ consists of coherent sheaves. Then the Beilinson spectral sequence is much simpler since
\[
E_1^{p,q}=\Ext^{n+q}(E_{n-p}, A) \otimes E_p^\vee=H^{n+q+k_{n-p}}(\mathcal E_{n-p}\otimes A)\otimes E_p^\vee.
\]

Note however that the grading in this spectral sequence applied for the projective space is slightly different from the grading of the usual Beilison spectral sequence, due to the existence of shifts by $n$ in the index $p,q$. Indeed, the $E_1$-terms of the usual spectral sequence are $H^q(A(p))\otimes \Omega^{-p}(-p)$ which are zero for positive $p$. To restore the order, one needs to change slightly the gradings of the spectral sequence from Theorem \ref{thm:Beilinson}. If we replace, in the expression
\[
E_1^{u,v} = \mathrm{Ext}^{v}(E_{-u},A) \otimes E_{n+u}^\vee=
H^{v+k_{-u}}(\mathcal E_{-u}\otimes A) \otimes \mathcal F_{-u}
\]
$u=-n+p$ and $v=n+q$ so that the fourth quadrant is mapped to the second quadrant, we obtain the following version of the Beilinson spectral sequence:

\begin{theorem}\label{use}
Let $X$ be a smooth projective variety with a full exceptional collection
$\langle E_0, \ldots, E_n\rangle$
where $E_i=\mathcal E_i^*[-k_i]$ with each $\mathcal E_i$ a vector bundle and $(k_0, \ldots, k_n)\in \zz^{\oplus n+1}$ such that there exists a sequence $\langle F_n=\mathcal F_n, \ldots, F_0=\mathcal F_0\rangle$ of vector bundles satisfying
\begin{equation}\label{order}
\mathrm{Ext}^k(E_i,F_j)=H^{k+k_i}( \mathcal E_i\otimes \mathcal F_j) =  \left\{
\begin{array}{cc}
\mathbb C & \textrm{\quad if $i=j=k$} \\
0 & \textrm{\quad otherwise}
\end{array}
\right.
\end{equation}
i.e. the collection $\langle F_n, \ldots, F_0\rangle$ labelled in the reverse order is the right dual collection of $\langle E_0, \ldots, E_n\rangle$.
Then for any coherent sheaf $A$ on $X$ there is a spectral sequence in the square $-n\leq p\leq 0$, $0\leq q\leq n$  with the $E_1$-term
\[
E_1^{p,q} = \mathrm{Ext}^{q}(E_{-p},A) \otimes F_{-p}=
H^{q+k_{-p}}(\mathcal E_{-p}\otimes A) \otimes \mathcal F_{-p}
\]
which is functorial in $A$ and converges to
\begin{equation}
E_{\infty}^{p,q}= \left\{
\begin{array}{cc}
A & \textrm{\quad if $p+q=0$} \\
0 & \textrm{\quad otherwise.}
\end{array}
\right.
\end{equation}
\end{theorem}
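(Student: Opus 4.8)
The plan is to derive Theorem \ref{use} from Theorem \ref{thm:Beilinson} purely by a reindexing of the spectral sequence, so the real content is already contained in the Beilinson statement and in the dual-collection characterization \eqref{eq:dual characterization}. First I would start from the $E_1$-term of Theorem \ref{thm:Beilinson}, namely $E_1^{p,q}=\bigoplus_{r+s=q}\Ext^{n+r}(E_{n-p},A)\otimes\mathcal H^s(E_p^\vee)$, and simplify it using the two standing hypotheses. Since each $E_i=\mathcal E_i^*[-k_i]$ is a pure shifted bundle and the right dual collection $\langle E_0^\vee,\dots,E_n^\vee\rangle$ is assumed to consist of honest coherent sheaves, the cohomology sheaf $\mathcal H^s(E_p^\vee)$ is concentrated in a single degree; writing $E_p^\vee$ in terms of the $\mathcal F$'s (the dual collection relabelled in reverse order, $F_{-p}=\mathcal F_{-p}$) collapses the internal sum over $r+s=q$ to a single summand. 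This produces the collapsed form $E_1^{p,q}=\Ext^{n+q}(E_{n-p},A)\otimes E_p^\vee=H^{n+q+k_{n-p}}(\mathcal E_{n-p}\otimes A)\otimes E_p^\vee$ recorded just before the statement, where the second equality is the definition of $\Ext$ out of a shifted bundle.

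Next I would carry out the change of variables exhibited in the paragraph preceding the theorem: set $u=-n+p$ and $v=n+q$, which translates the fourth-quadrant indexing of the Beilinson sequence into the second-quadrant square $-n\le p\le 0$, $0\le q\le n$ used in the statement. Under this substitution the term becomes $E_1^{u,v}=\Ext^{v}(E_{-u},A)\otimes E_{n+u}^\vee$, and relabelling the running indices back to $(p,q)$ gives exactly $E_1^{p,q}=\Ext^{q}(E_{-p},A)\otimes F_{-p}=H^{q+k_{-p}}(\mathcal E_{-p}\otimes A)\otimes\mathcal F_{-p}$. Here I would invoke the identification of $E_{n+u}^\vee$ with $F_{-u}=\mathcal F_{-u}$ coming from the reverse-order convention for the dual collection, so that the dual sheaves appearing are precisely the $\mathcal F_j$ satisfying the orthogonality relation \eqref{order}.

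The remaining point is convergence. Theorem \ref{thm:Beilinson} already guarantees that the sequence converges to $\mathcal H^{p+q}(A)$, and under the reindexing $u+v=p+q$, so the abutment in the new coordinates is $\mathcal H^{u+v}(A)$. Since $A$ is taken to be a genuine coherent sheaf (a complex concentrated in degree $0$), its only nonzero cohomology object is $A$ itself in degree $0$; hence $E_\infty^{p,q}=A$ when $p+q=0$ and vanishes otherwise, which is the stated abutment. Functoriality in $A$ is inherited directly from the functoriality asserted in Theorem \ref{thm:Beilinson}, as the reindexing is independent of $A$.

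The main obstacle, and the only step that is more than bookkeeping, is verifying that the internal sum over $r+s=q$ really does collapse to a single term—equivalently, that each $E_p^\vee$ is a sheaf placed in the single cohomological degree dictated by the shifts $k_i$, so that $\mathcal H^s(E_p^\vee)$ is nonzero for exactly one $s$. This is where the hypothesis that the right dual collection consists of coherent sheaves, together with the bookkeeping of the shifts $k_i$ encoded in condition \eqref{order}, must be used carefully; once that concentration is established, the rest is the change of variables and transport of convergence and functoriality from Theorem \ref{thm:Beilinson}.
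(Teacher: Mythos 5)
Your proposal is correct and coincides with the paper's own derivation, which is carried out in the paragraphs immediately preceding the statement: the sum in the $E_1$-term of Theorem \ref{thm:Beilinson} collapses because the right dual collection consists of sheaves concentrated in degree $0$, the substitution $u=-n+p$, $v=n+q$ moves the sequence into the square $-n\le p\le 0$, $0\le q\le n$, and convergence and functoriality are transported directly, with the abutment reducing to $A$ in total degree $0$ since $A$ is a sheaf. No further comment is needed.
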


Let us focus on our case. Let us consider $F$ as an hyperplane section of $\pp^2\times\pp^2$ with the two natural projections $p_i:F\subset\pp^2\times\pp^2\arr\pp^2$ and the following rank two vector bundles:

\begin{equation}\label{pullbackcotangent}
\sG_1=p_1^*\Omega_{\mathbb P^2}^1(2h_1)\qquad \sG_2=p_2^*\Omega_{\mathbb P^2}^1(2h_2).
\end{equation}

\noindent Next, since it is easily computed that

$$
\dim(\Ext^1(\sO_F(-1,0),\sG_2(0,-2))=3,
$$

\noindent we denote by $\sG_3$ the rank $5$ vector bundle arising from the extension

\begin{equation}
\label{g3} 0\to\sG_2(0,-2)\to \sG_3\to\sO_F(-1,0)^{\oplus 3}\to 0,
\end{equation}
\noindent associated to a basis of the aforementioned group $\Ext^1(\sO_F(-1,0),\sG_2(0,-2))$.

Using the presentation of the flag variety as the projective bundle $F:=\pp(\Omega_{\pp^2}(1))\rightarrow \pp^2$, its derived category can be generated by the exceptional collection (see \cite{Orlov}):
$$
\{\sO_F(-1,-2), \sO_F(-1,-1), \sO_F(-1,0), \sO_F(0,-2), \sO_F(0,-1), \sO_F\}
$$
After two  left  mutations, we can obtain the exceptional collection
 \begin{equation}\label{col1} \{\sO_F(-1,-1)[-2], \sG_2(-1,-1)[-2],  \sG_1(-1,-1)[-1],
\sO_F(-1,0)[-1] ,  \sO_F(0,-1),  \sO_F\}
\end{equation}
and by another left mutation we obtain
$$ \{\sO_F(-1,-1)[-2], \sG_2(-1,-1)[-2],  \sG_1(-1,-1)[-1],
\sO_F(-1,0)[-1] , \sG_2(0,-2),  \sO_F(0,-1)\}.$$
Now we consider the pair $\sO_F(-1,0)[-1] , \sG_2(0,-2)$, from
$$
\mathbb L_{\sO_F(-1,0)[-1]} (\sG_2(0,-2)) 		\to	 \Ext^\bullet(\sO_F(-1,0)[-1] , \sG_2(0,-2)) \otimes \sO_F(-1,0)[-1]\to$$ $$\to \sG_2(0,-2) 	 		 \to	 \mathbb L_{\sO_F(-1,0)[-1]} (\sG_2(0,-2))[1]
$$ we get
$$0 \to \sG_2(0,-2) 	 		 \to	 \mathbb L_{\sO_F(-1,0)[-1]} (\sG_2(0,-2))[1]\to	 \Ext^1(\sO_F(-1,0) , \sG_2(0,-2)) \otimes \sO_F(-1,0)\to 0$$
and from (\ref{g3}) we obtain $$ \mathbb L_{\sO_F(-1,0)[-1]} (\sG_2(0,-2))=\sG_3[-1].$$
Hence we have the following exceptional collection
 \begin{equation}\label{col2} \{\sO_F(-1,-1)[-2], \sG_2(-1,-1)[-2],  \sG_1(-1,-1)[-1], \sG_3[-1],
\sO_F(-1,0)[-1] ,  \sO_F(0,-1)\}
\end{equation}
which will be the one considered in Theorem \ref{firstmonad}.

 \section{Monads on the flag variety $F=F(0,1,2)$}\label{sec:monad}
In this section we use the Beilinson Theorem in order to characterize the instanton bundles as the cohomology of two different monads, describe the relation between them and use it to give a presentation of the moduli space $MI_F(k)$ as a GIT-quotient.

\begin{theorem}\label{secondmonad}
Let $\sE$ be an instanton bundle with charge $k$ on $F$. Then, up to permutation, $\sE$ is the cohomology of a monad
\begin{equation}\label{mon2} 0 \to \sO_F(-1,0)^{\oplus k}\oplus \sO_F(0,-1)^{\oplus k} \xrightarrow{\alpha}  \sG_1(-1,0)^{\oplus k}\oplus \sG_2(0,-1)^{\oplus k}
\xrightarrow{\beta} \sO_F^{\oplus 2k-2} \to 0,\end{equation}
\noindent where $\sG_i$ is the pull-back of the twisted cotangent bundle $\Omega_{\pp^2}(2)$ from the two natural projections $p_i:F\subset\pp^2\times\pp^2\arr\pp^2$.

Reciprocally, the cohomology of such a monad defines a $k$-instanton.
\end{theorem}

\begin{proof}

We construct a Beilinson complex quasi-isomorphic to $\sE$ as in Theorem \ref{use} from (\ref{col1}) by calculating $H^i(\sE\otimes E_j)\otimes F_j$, with  $i,j \in \{0,1,2,3,4,5\}$ with

$$
E_5=\sO_F(-1,-1)[-2], E_4 = \sG_2(-1,-1)[-2], E_3 = \sG_1(-1,-1)[-1],
$$
$$
E_2 = \sO_F(-1,0)[-1] , E_1 = \sO_F(0,-1), E_0 = \sO_F
$$
and
$$
F_0 = \sO_F, F_1 = \sG_2(0,-1), F_2 = \sG_1(-1,0), F_3 =  \sO_F(0,-1), F_4 = \sO_F(-1,0), F_5 = \sO_F(-1,-1)
$$
Let us notice that the cohomological properties in (\ref{order}) are satisfied, therefore, our goal is to find all possible information on the following table

  \begin{center}\begin{tabular}{|c|c|c|c|c|c|c|c|c|c|c|}
\hline

$H^3$	&	$H^3$	&	$0$		&	$0$		&	$0$		&	$0$	\\
$H^2$	&	$H^2$	&	$H^3$	&	$H^3$	&	$0$		&	$0$	\\
$H^1$	& 	$H^1$	&	$H^2$	&	$H^2$	&	$H^3$	&	$H^3$	\\
$H^0$	& 	$H^0$	&	$H^1$	&	$H^1$	&	$H^2$	&	$H^2$	\\
$0$		&	$0$ 	 	&	$H^0$	&	$H^0$	& 	$H^1$	& 	$H^1$	\\
$0$		&	$0$		&	$0$		&	$0$		&	$H^0$	& 	$H^0$ \\
\hline
$\sE(-1,-1)$		& $\sE\otimes\sG_2(-1,-1)$	 & $\sE \otimes \sG_1(-1,-1)$	 & $\sE(-1,0)$		& $\sE(0,-1)$		&$\sE$\\
\hline
\end{tabular}
\end{center}

From the cohomological hypothesis it is straightforward to compute that $H^i(\sE(-1,-1))=0$, for $i=0,1,2,3$.\\
Considering the short exact sequences
$$
0 \longrightarrow \sE \otimes \sG_1(-1,-1) \longrightarrow \sE(0,-1)^3 \longrightarrow \sE(1,-1) \longrightarrow 0
$$
and
$$
0 \longrightarrow \sE(-2,-1) \longrightarrow \sE(-1,-1)^3 \longrightarrow \sE \otimes \sG_1(-1,-1) \longrightarrow 0
$$
we obtain directly
$$
H^0(\sE \otimes \sG_1(-1,-1)) = H^3(\sE \otimes \sG_1(-1,-1)) = 0,
$$
moreover, $H^2(\sE \otimes \sG_1(-1,-1)) \simeq H^3(\sE(-2,-1)) \simeq H^0(\sE(0,-1)) = 0$, hence we only need to compute
$$
H^1(\sE \otimes \sG_1(-1,-1)) \simeq H^2(\sE(-2,-1)) \simeq H^1(\sE(0,-1)).
$$
Using (\ref{eulerchar}) to compute the Euler characteristic, we get
$$
-h^1(\sE(0,-1)) = \chi(\sE(0,-1)) = 2 + \frac{1}{6}(6k) + \frac{1}{2}(2-4k)+\frac{1}{12}(-36) = -k.
$$
In the same way, we obtain that
$$
h^i(\sE \otimes \sG_2(-1,-1)) = \left\{
\begin{array}{cl}
k & \mbox{if}\:\:i=1,\\
0 & \mbox{if}\:\:i=0,2,3.
\end{array}
\right.
$$
Using the previous computation, we also get that
$$
h^i(\sE(0,-1)) = h^i(\sE(-1,0)) = \left\{
\begin{array}{cl}
k & \mbox{if}\:\:i=1,\\
0 & \mbox{if}\:\:i=0,2,3.
\end{array}
\right.
$$
Considering the short exact sequence
$$
0 \longrightarrow \sE(-2,-2) \longrightarrow \sE \otimes \sG_1(-3,-1) \longrightarrow \sE(-3,0) \longrightarrow 0
$$
and
$$
0 \longrightarrow \sE \otimes \sG_1(-3,-1) \longrightarrow \sE(-2,-1)^3 \longrightarrow \sE(-1,-1) \longrightarrow 0
$$
we can calculate $H^1(\sE\otimes \sG_1(-3,-1)) \simeq H^1(\sE(-2,-2)) \simeq H^2(\sE) = 0$. Moreover
$$
-h^1(\sE) = \chi(\sE) = 2-2k.
$$
Finally, the vanishing $H^0(\sE)=0$ implies $H^3(\sE)=0$. \\
Due to all previous computations, the cohomology table becomes
\begin{center}\begin{tabular}{|c|c|c|c|c|c|c|c|c|c|c|}
\hline

$0$	&	$0$	&	$0$	&	$0$	&	$0$	&	$0$	\\
$0$	&	$0$	&	$0$	&	$0$	&	$0$	&	$0$	\\
$0$	& 	$k$	&	$0$	&	$0$	&	$0$	&	$0$	\\
$0$	& 	$0$	&	$k$	&	$k$	&	$0$	&	$0$	\\
$0$	&	$0$ 	&	$0$	&	$0$	& 	$k$	& 	$2k-2$	\\
$0$	&	$0$	&	$0$	&	$0$	&	$0$	& 	$0$ \\
\hline
$\sE(-1,-1)$	&	 $\sE\otimes\sG_2(-1,-1)$	 & $\sE \otimes \sG_1(-1,-1)$	& $\sE(-1,0)$		& $\sE(0,-1)$		&$\sE$\\
\hline
\end{tabular}
\end{center}

From this table, since $Ext^k(F_i,F_j)=0$ for $k>0$ and any $i,j$, we get the claimed monad.

Reciprocally, the cohomology of a monad of type (\ref{mon2})  is an instanton bundle. Indeed  $H^0(\sG_1(-1,0)) = H^0(\sG_2(0,-1)) =0$,  combined with $H^1(\sO_F(-1,0))=H^1(\sO_F(0,-1))=0$, gives us that $H^0(\sE)=0$ and  $H^1(\sG_1(-2,-1)) = H^1(\sG_2(-1,-2)) = H^0(\sO_F(-1,-1))=0$,  combined with $H^2(\sO_F(-2,-1))=H^2(\sO_F(-1,-2))=0$, gives us that $H^1(\sE(-1,-1))=0$. Moreover, for each pair $(p,q)$ such that $p+q<0$, we get $H^0(\sG_1(p-1,q))=H^0(\sG_2(p,q-1))=H^1(\sO_F(p-1,q))=H^1(\sO_F(p,q-1))=0$, which implies $H^0(\sE(p,q))=0$, hence the $\mu$-semistability of the bundle $\sE$.
\end{proof}

Let us remark that the monad above is the analog of the monad for instanton bundles on $\mathbb P^3$ (see for instance \cite{AO} display $(1.1)$)
$$
0 \to \sO_{\pp^3}(-1)^{\oplus k} \xrightarrow{\alpha}  \Omega_{\pp^3}(1)^{\oplus k}
\xrightarrow{\beta} \sO_{\pp^3}^{\oplus 2k-2} \to 0.
$$

We are going to present now the second characterization of instanton bundles through a monad. This presentation was already proposed by Buchdahl (see \cite{Bu}, display $(3.9)$ with $n=2$ and $l=0$) and Donaldson (for charge $k=1$; see \cite{Don}, display $(14)$). Nevertheless, for the sake of completeness, we will go through the full computation in the next Theorem. After that, we will explicit the relation between the two monads. Later on, we will use the second characterization to describe the moduli space $MI_F(k)$ of $k$-instantons and its open subset $MI_F^s(k)$ of stable $k$-instantons.

\begin{theorem}\label{firstmonad}
Let $\sE$ be an instanton bundle with charge $k$ on $F$. Then, up to permutation, $\sE$ is the cohomology of a monad \begin{equation}\label{mon}0 \to \sO_F(0,-1)^{\oplus k}\oplus \sO_F(-1,0)^{\oplus k} \xrightarrow{\alpha}  \sO_F^{\oplus 4k+2}
\xrightarrow{\beta} \sO_F(1,0)^{\oplus k}\oplus \sO_F(0,1)^{\oplus k}\to 0.\end{equation}
Moreover, the monad obtained is self-dual, i.e. it is possible to find a non degenerate symplectic form $q: W \rightarrow W^*$, with $W$ a $(4k+2)$-dimensional vector space describing the copies of the trivial bundle in the monad, such that $\beta = \alpha^\lor \circ (q \otimes id_{\sO_F})$.

Reciprocally, any vector bundle with no global sections defined as the cohomology of such a monad is a $k$-instanton bundle.
\end{theorem}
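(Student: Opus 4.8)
The plan is to follow the pattern of the proof of Theorem~\ref{secondmonad}, replacing the collection~(\ref{col1}) by the collection~(\ref{col2}), which was engineered precisely for this purpose. First I would feed $\sE$ into the Beilinson spectral sequence of Theorem~\ref{use} relative to~(\ref{col2}) and its right dual collection, and write out the six columns of the $E_1$-page. Their entries are the groups $H^{q+k_{-p}}(\sE_{-p}\otimes\sE)$ attached to the (twisted) bundles occurring in~(\ref{col2}), namely $\sO_F$, $\sG_1$, $\sG_2$ and the rank five bundle $\sG_4$. Exactly as before, every such group is pinned down by the instanton vanishing $H^1(\sE(-1,-1))=0$ together with its consequence $H^i(\sE(-1,-1))=0$ for all $i$, by $\mu$-semistability in the form of Hoppe's criterion (so that $H^0(\sE(p,q))=0$ whenever $p+q\le 0$), by Serre duality, and by the Euler characteristic formula~(\ref{eulerchar}). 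The only computation not already present in Theorem~\ref{secondmonad} is the cohomology of $\sE\otimes\sG_4$, which I would extract from the defining extension~(\ref{g4}) by splitting it into the pieces $\sG_2(0,-2)$ and $\sO_F(-1,0)^{\oplus 3}$, whose cohomology is known.

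Once the table is filled, the surviving anti-diagonal assembles into a complex whose terms are the dual-collection bundles carrying the computed multiplicities. Since~(\ref{col2}) was obtained from~(\ref{col1}) by the mutation introducing the rank five bundle $\sG_4$, some contributions are not yet expressed through line bundles; I would use the extensions~(\ref{g3}) and~(\ref{g4}) once more to rewrite them in terms of $\sO_F$ and the line bundles $\sO_F(\pm1,0)$, $\sO_F(0,\pm1)$, thereby collapsing the complex to the linear three-term monad~(\ref{mon}) with middle term $\sO_F^{\oplus 4k+2}$ and outer terms $\sO_F(0,-1)^{\oplus k}\oplus\sO_F(-1,0)^{\oplus k}$ and $\sO_F(1,0)^{\oplus k}\oplus\sO_F(0,1)^{\oplus k}$. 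By the convergence statement of Theorem~\ref{use} the total complex has cohomology $\sE$, and the rewriting via~(\ref{g3}) and~(\ref{g4}) is a quasi-isomorphism, so the resulting linear monad again has cohomology $\sE$ up to the permutation $h_1\leftrightarrow h_2$.

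For the self-duality I would use that any rank two bundle with $c_1(\sE)=0$ has $\bigwedge^2\sE\cong\sO_F$, so the induced pairing $\sE\otimes\sE\to\sO_F$ is alternating and yields a canonical isomorphism $\phi\colon\sE\xrightarrow{\sim}\sE^{\vee}$ with $\phi^{\vee}=-\phi$. Dualizing~(\ref{mon}) and using $(\sO_F(0,-1)^{\oplus k}\oplus\sO_F(-1,0)^{\oplus k})^{\vee}=\sO_F(0,1)^{\oplus k}\oplus\sO_F(1,0)^{\oplus k}$ identifies the dual complex with a monad of the very same shape~(\ref{mon}), whose cohomology is $\sE^{\vee}$. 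Because the Beilinson construction of Theorem~\ref{use} is functorial in $A$, the isomorphism $\phi$ induces an isomorphism from the monad of $\sE$ to that of $\sE^{\vee}$, that is, from~(\ref{mon}) to its dual. Reading this isomorphism off on the $(4k+2)$-dimensional middle space $W$ produces a nondegenerate form $q\colon W\to W^{*}$ realizing $\beta=\alpha^{\vee}\circ(q\otimes\mathrm{id}_{\sO_F})$, while transporting the relation $\phi^{\vee}=-\phi$ through the construction forces $q^{*}=-q$, so that $q$ is symplectic.

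The hard part will be this last step, namely guaranteeing that the morphism produced by $\phi$ is a genuine isomorphism of monads whose middle component is exactly the desired form on $W$, rather than merely a quasi-isomorphism of complexes. This reduces to the uniqueness of the monad representing $\sE$ up to the automorphisms of its terms, which rests on the vanishing of $\Hom$ and $\Ext^{1}$ between the distinct line bundles appearing in~(\ref{mon}); these vanishings are read off from Proposition~\ref{pLineBundle} and ensure that every automorphism of the cohomology lifts to the monad and that no nontrivial homotopies interfere. Granting this, the skew-symmetry of $q$ follows directly from the alternating character of the wedge pairing and its nondegeneracy from $q$ being an isomorphism. By contrast, the cohomological bookkeeping of the first two paragraphs is routine, the one delicate point being the multiplicity of the $\sG_4$-contribution, since an error there would already spoil the count $4k+2$ in the middle term.
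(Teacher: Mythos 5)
Your proposal follows essentially the same route as the paper: run the Beilinson spectral sequence of Theorem \ref{use} on the collection (\ref{col2}), reuse the cohomology table already established in Theorem \ref{secondmonad}, compute the one new column $H^\bullet(\sE\otimes\sG_4)$ from the extension (\ref{g4}) to get the multiplicity $4k+2$, and obtain self-duality by lifting the symplectic isomorphism $\sE\cong\sE^\vee$ to an isomorphism of monads via the uniqueness/lifting statement (the paper cites \cite[Lemma 4.1.3]{OSS} for precisely this). One small correction: no rewriting via (\ref{g3}) and (\ref{g4}) is needed at the end, because $\sG_4$ occurs only among the objects $E_i$ (it merely determines the multiplicity of the middle term through $h^1(\sE\otimes\sG_4)$), while the terms of the resulting complex are the right-dual bundles $F_j$, which are already the line bundles $\sO_F(\pm1,0)$, $\sO_F(0,\pm1)$, $\sO_F$, so the monad is linear from the outset.
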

\begin{proof}
We  construct a Beilinson complex quasi-isomorphic to $\sE$,
  by calculating $H^i(\sE\otimes E_j)\otimes F_j$ as in Theorem \ref{use} from (\ref{col2}), with  $i,j \in \{0,1,2,3,4,5\}$, where $$E_5=\sO_F(-1,-1)[-2]	, E_4=\sG_2(-1,-1)[-2], E_3= \sG_1(-1,-1) [-1],$$ $$E_2=\sG_3 [-1], E_1=\sO_F(0,-1)[-1], E_0=\sO_F(-1,0),$$ and $$F_0=\sO_F(1,0), F_1=\sO_F(0,1), F_2=\sO_F, F_3=\sO_F(0,-1), F_4=\sO_F(-1,0), F_5=\sO_F(-1,-1).$$
We have computed, from the previous result, all the entries of the cohomology table except one column, hence we already have the following:

  \begin{center}\begin{tabular}{|c|c|c|c|c|c|c|c|c|c|c|}
\hline
$0$			& $0$	 	& $0$		 	& $0$		 	& $0$			& $0$	\\
$0$		&$ 0$	 	& $0$	 	& $H^3$		& $0$			& $0$	\\
		$0$		& $k$		& $0$		& $H^2$		& $0$		& $0$	\\
	$0$ & $0$	& $k$	&	 $H^1$		& $k$		& $0$	\\
$0$			& $0$  			& $0$		& $H^0$		& $0$		& $k$	\\
$0$			& $0$			& $0$			& $0$		& $0$		& $0$ \\
\hline
$\sE(-1,-1)$		& $\sE\otimes\sG_2(-1,-1)$	 & $\sE \otimes \sG_1(-1,-1)$	 & $\sE\otimes\sG_3$		& $\sE(0,-1)$		&$\sE(-1,0)$\\
\hline
\end{tabular}
\end{center}

 Since  $$\chi(\sE\otimes\sG_2(0,-2))=\chi(\sE(0,-1)^{\oplus 3})-\chi(\sE)=-3k+2k-2=-2-k$$  and by the sequence ($\ref{g3}$) tensored by $\sE$ we have $$\chi(\sE\otimes\sG_3)=\chi(\sE(-1,0)^{\oplus 3})+\chi(\sE\otimes\sG_2(0,-2))=-3k-2-k=-4k-2.$$ This means that $H^i(\sE\otimes\sG_3)=0$ for $i\not= 1$ and we obtain

 \begin{center}\begin{tabular}{|c|c|c|c|c|c|c|c|c|c|c|}
\hline
$0$			& $0$	 	& $0$		 	& $0$		 	& $0$			& $0$	\\
$0$	&	$ 0$	 	& $0$	 	& $0$		& $0$			& $0$	\\
		$0$		& $k$		& $0$		& $0$		& $0$		& $0$	\\
	$0$&  $0$&	 $k$	&	 $4k+2$		& $k$		& $0$	\\
$0$			& $0$  			& $0$		& $0$		& $0$		& $k$	\\
$0$			& $0$			& $0$			& $0$		& $0$	& $0$ \\
\hline
$\sE(-1,-1)$	&	 $\sE\otimes\sG_2(-1,-1)$	 & $\sE \otimes \sG_1(-1,-1)$	 & $\sE\otimes\sG_3$		& $\sE(0,-1)$		&$\sE(-1,0)$\\
\hline
\end{tabular}
\end{center}

From this table, since $Ext^k(F_i,F_j)=0$ for $k>0$ and any $i,j$, we get the claimed monad.

From previous computations in cohomology, we know that $H^1(\sE \otimes \sG_1(-1,-1)) \simeq H^1(\sE(0,-1))^*$ and analogously $H^1(\sE \otimes \sG_2(-1,-1)) \simeq H^1(\sE(-1,0))^*$. Denote by $I_1$ the vector space of the first isomorphism and by $I_2$ the vector space of the second one, and denote by $W$ the cohomology group $H^1(\sE\otimes\sG_3)$.\\
The vector bundles in the obtained monad satisfy the required cohomological conditions in order to have a bijection between homomorphism of monads and the induced homomorphism in the cohomology bundles (see for example Lemma 4.1.3 in \cite{OSS}). Recalling that $\sE$ carries a unique symplectic structure, a consequence of the cited result gives us two isomorphisms
$$
\bar{q} : W \otimes \sO_F \rightarrow W^* \otimes \sO_F
$$
and
$$
h : (I_1 \otimes \sO_F(1,0)) \oplus (I_2 \otimes \sO_F(0,1)) \rightarrow (I_1 \otimes \sO_F(1,0)) \oplus (I_2 \otimes \sO_F(0,1))
$$
such that
$$
q^\lor = - q \mbox{ and } h \circ \beta = \alpha^\lor \circ q.
$$
This implies that the monad (\ref{mon}) is self-dual, therefore we have that $\beta = \alpha^\lor \circ (q \otimes id_{\sO_F})$.

Reciprocally, the cohomology of a monad of type (\ref{mon}) with $H^0(\sE)=0$  is an instanton bundle. Indeed  $H^0(\sO_F(0,-1)) = H^0(\sO_F(0,-1)) = H^1(\sO_F(-1,-1))=0$, combined with $H^2(\sO_F(-2,-1))=H^2(\sO_F(-1,-2))=0$, gives us that $H^1(\sE(-1,-1))=0$. Moreover, for each pair $(p,q)$ such that $p+q<0$, we get $H^0(\sO_F(p,q))=H^1(\sO_F(p-1,q))=H^1(\sO_F(p,q-1))=0$, which implies $H^0(\sE(p,q))=0$, hence the $\mu$-semistability of the bundle $\sE$.
\end{proof}

\begin{remark}
\begin{itemize}
\item[i)] In the case of instanton bundles on $\pp^{2n+1}$, the cohomology of the monad analogous to the one given at Theorem \ref{firstmonad} had automatically no non-zero sections (cf. \cite[Theo. 2.8]{AO2}). However, in the case of the flag variety we are dealing with, we need to add this extra condition to recover an instanton bundle, as it had been pointed out at Remark \ref{remarkglobalsections}.
\item[ii)] If $k=1$ the instanton bundle $\sE$  twisted by $\sO_F(1,1)$ is an  Ulrich bundle with $c_1(\sE(1,1))=(2,2)$ and $c_2(\sE(1,1))=4h_1h_2$ which is $\mu$-stable unless $\sE$ arises (up to permutation) from an extension

$$0\to \sO_F(1,-1)\to \sE\to\sO_F(-1,1)\to 0,$$
see \cite{CFM3}. Moreover since $\sE(1,1)$ is Ulrich on a Del Pezzo Threefold, hence, by \cite{CFM2}, we get $$\Ext^2(\sE,\sE)=0.$$
\end{itemize}
\end{remark}

\begin{remark}
As in the case of instanton bundles on the projective space, the two monads defining instanton bundles on the flag variety are closely related. Indeed, let us describe the monad defined in Theorem \ref{firstmonad} by the short exact sequences (the first and second display of the monad)
\begin{equation}\label{displaymonad}
\begin{array}{c}
0 \longrightarrow \sK \longrightarrow \sO^{4k+2} \longrightarrow \sO(1,0)^{\oplus k}\oplus \sO(0,1)^{\oplus k} \longrightarrow 0 \vspace{0.5 cm}\\
0 \longrightarrow \sO(-1,0)^k \oplus \sO(0,-1)^k \longrightarrow \sK \longrightarrow \sE \longrightarrow 0
\end{array}
\end{equation}
The first sequence fits in the following commutative diagram
$$
\xymatrix{
& 0 \ar[d] & 0  \ar[d] & 0 \ar[d] \\
0 \ar[r] & \sK \ar[d] \ar[r] & \sO^{\oplus 4k+2} \ar[d] \ar[r]^(.3){\beta} &  \sO(1,0)^{\oplus k}\oplus \sO(0,1)^{\oplus k} \ar[r] \ar[d] & 0\\
0 \ar[r] & \sG_1(-1,0)^{\oplus k} \oplus \sG_2(0,-1)^{\oplus k} \ar[d] \ar[r] & \sO^{\oplus 6k} \ar[d] \ar[r]^(.3){\beta'} &  \sO(1,0)^{\oplus k}\oplus \sO(0,1)^{\oplus k} \ar[r] \ar[d] & 0\\
0 \ar[r] & \sO^{\oplus 2k-2} \ar[d] \ar[r] & \sO^{\oplus 2k-2} \ar[d] \ar[r] & 0 \\
& 0 & 0
}
$$
indeed, we can complete the matrix representing $\beta$ with columns of the required forms of degree 1 in either the $x_i$'s or $y_i$'s, in order to keep $\ker \beta'$ with no global sections. Starting from this diagram, it is possible to induce the following one
$$
\xymatrix{
& 0 \ar[d] & 0 \ar[d] & 0 \ar[d] \\
0 \ar[r] & \sO(-1,0)^{\oplus k} \oplus \sO(0,-1)^{\oplus k} \ar[r] \ar[d] & \sK \ar[r] \ar[d] & \sE \ar[r] \ar[d] & 0 \\
0 \ar[r] & \sO(-1,0)^{\oplus k} \oplus \sO(0,-1)^{\oplus k} \ar[r] \ar[d] & \sG_1(-1,0)^{\oplus k} \oplus \sG_2(0,-1)^{\oplus k} \ar[r] \ar[d] & \sC \ar[r] \ar[d] & 0 \\
 & 0\ar[r] & \sO^{\oplus 2k-2}\ar[r]\ar[d] & \sO^{\oplus 2k-2} \ar[r] \ar[d]& 0\\
 & & 0 & 0
}
$$
which can be seen as the display of the monad defined in Theorem \ref{secondmonad}.
\end{remark}

The previous presentation allows us to prove the following result (compare with \cite[Section 2]{CO}).

\begin{theorem}
  The moduli space $MI_F(k)$ is the geometric GIT quotient $\fD_k^0/G_k$.
\end{theorem}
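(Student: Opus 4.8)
The plan is to realize $\fD_k^0$ as the parameter space of the self-dual monads (\ref{mon}), to check that $G_k$-orbits correspond bijectively to isomorphism classes of $k$-instantons, and then to exhibit $\fD_k^0$ as the stable locus for a suitable linearization of the reductive group $G_k$. First I would set up the set-theoretic correspondence. By Theorem \ref{firstmonad}, sending $A\in\fD_k^0$ to the cohomology $\sE_A$ of the self-dual monad (\ref{mon}) surjects $\fD_k^0$ onto the set of isomorphism classes of $k$-instanton bundles: every instanton is such a cohomology, the defining equation $AJA^t=0$ is exactly the condition $\beta\circ\alpha=0$ making (\ref{mon}) a complex, the surjectivity built into $\fD_k^0$ guarantees that $\beta=A$ is surjective, and self-duality then forces $\alpha=JA^t$ to be a subbundle inclusion. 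I would then show $\sE_A\cong\sE_{A'}$ iff $A,A'$ lie in one $G_k$-orbit: the rigidity statement (Lemma 4.1.3 of \cite{OSS}, already invoked in the proof of Theorem \ref{firstmonad}) gives a bijection between monad isomorphisms and isomorphisms of the cohomology bundles, and requiring compatibility with the symplectic self-dual structure forces the middle component into $Sp(W,J)$ while leaving the outer components $g_i\in GL(I_i)$ free; that is, such an isomorphism is precisely the action of an element of $G_k$.

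Next I would pin down the stabilizers. The instanton $\sE_A$ is simple, $\End(\sE_A)=\cc$, so $\Aut(\sE_A)=\cc^*$, and by rigidity each scalar $c\in\cc^*$ lifts to the monad automorphism $(c\,\mathrm{id}_W,\,c\,\mathrm{id}_{I_1},\,c\,\mathrm{id}_{I_2})$. This lift lands in $G_k$, i.e. $c\,\mathrm{id}_W\in Sp(W,J)$, exactly when $c^2=1$. Hence $\mathrm{Stab}_{G_k}(A)=\{\pm(\mathrm{id}_W,\mathrm{id}_{I_1},\mathrm{id}_{I_2})\}\cong\mathbb Z/2\mathbb Z$ for every $A\in\fD_k^0$; using the convention $(\eta,g_1,g_2)A=\binom{g_1\ 0}{0\ g_2}A\eta^t$ one sees this central $\mathbb Z/2\mathbb Z$ acts trivially on all of $\fD_k$ (the two sign factors cancel). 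Consequently $\bar G_k:=G_k/(\mathbb Z/2\mathbb Z)$ acts on $\fD_k^0$ with trivial stabilizers, i.e. freely.

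For the quotient itself I would invoke GIT. The set $\fD_k$ is an affine $G_k$-variety, being the closed subscheme of the affine space $W^*\otimes((I_1\otimes U)\oplus(I_2\otimes U^*))$ cut out by the quadratic equations $AJA^t=0$, and $G_k$ is reductive, so a GIT quotient exists for any linearization. Since $Sp(W,J)$ is semisimple, the characters of $G_k$ are the products $\chi=(\det I_1)^{a}(\det I_2)^{b}$; choosing $a,b$ of the appropriate sign, I would verify that the $\chi$-semistable locus coincides with the $\chi$-stable locus and equals the surjectivity locus $\fD_k^0$. On a stable locus the GIT quotient is a geometric quotient with closed orbits and finite stabilizers; here the stabilizers are even trivial modulo the central $\mathbb Z/2\mathbb Z$, so the action of $\bar G_k$ is free. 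Combined with the orbit--isomorphism bijection of the first step, this identifies $\fD_k^0/G_k$ with $MI_F(k)$ and presents it as a geometric GIT quotient.

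The technical heart, and the step I expect to be the main obstacle, is the Hilbert--Mumford computation matching $\fD_k^{s}(\chi)$ with $\fD_k^0$. For a one-parameter subgroup $\lambda$ of $Sp(W,J)\times GL(I_1)\times GL(I_2)$ one decomposes $W$, $I_1$ and $I_2$ into weight spaces, with the symplectic form pairing the weight $r$ and weight $-r$ summands of $W$, and shows that whenever $\lim_{t\to 0}\lambda(t)\cdot A$ exists with nonpositive $\chi$-weight the map $A$ must factor through a proper subbundle, i.e. $A$ is non-surjective; conversely surjectivity of $A$ rules out every destabilizing $\lambda$. This numerical, closed-orbit analysis (parallel to the treatment in \cite[Section 2]{CO}) is where the real work lies, the reductivity of $G_k$, the affineness of $\fD_k$, and the freeness of the action being comparatively formal.
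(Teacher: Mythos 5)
Your first two steps reproduce the paper's argument exactly: the bijection between $G_k$-orbits in $\fD_k^0$ and isomorphism classes of $k$-instantons via the rigidity of monads, and the computation that the stabilizer of every point is the central $\Lambda=\pm(\mathrm{id}_W,\mathrm{id}_{I_1},\mathrm{id}_{I_2})$, so that $G_k/\Lambda$ acts freely (your observation that $c\,\mathrm{id}_W\in Sp(W,J)$ forces $c^2=1$ is precisely the point). Where you diverge is in how you get from the free action to a geometric quotient. The paper takes the short route: since $G_k$ is reductive, $\fD_k$ is affine, and the action on the open set $\fD_k^0$ is free, all orbits in $\fD_k^0$ have the same dimension; as an orbit closure can only add orbits of strictly smaller dimension, every orbit is already closed in $\fD_k^0$, and the quotient is geometric. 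You instead propose a character linearization $\chi=(\det I_1)^a(\det I_2)^b$ and a Hilbert--Mumford analysis identifying $\fD_k^0$ with the $\chi$-stable locus, in the style of \cite[Section 2]{CO}.

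The concrete gap is that this Hilbert--Mumford step --- which you yourself flag as ``where the real work lies'' --- is not carried out: you do not exhibit the signs $a,b$, nor verify that every destabilizing one-parameter subgroup forces $A$ to be non-surjective and conversely. As it stands, the claimed equality $\fD_k^{s}(\chi)=\fD_k^0$ is an announcement, not a proof, and without it your route does not close. The comparison is instructive precisely because the paper's argument shows this computation is unnecessary for the statement at hand: closedness of orbits follows formally from freeness and equidimensionality, with no linearization needed. If you do want the stronger structural information a stability analysis would provide (e.g.\ toward the affineness conjecture for $MI_F(k)$ discussed after the theorem), your plan is reasonable, but it should then be executed in full; for the theorem as stated, the equidimensionality argument is both sufficient and considerably cheaper.
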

\begin{proof}
  We just saw that isomorphic classes of instanton bundles are in one-to-one correspondence with $G_k$-orbits in $\fD_k^0$. Moreover, their isotropy group is $\Lambda:=\pm(id_{Sp(W,J)},id_{GL(I_1)},id_{GL(I_2)}))$. Therefore, $G_k/\Lambda$ acts freely on $\fD_k^0$ and in particular all orbits are closed:$\fD_k^0\rightarrow\fD_k^0/(G_k/\Lambda)$ is a geometric quotient.
\end{proof}

In general, it is a challenging question to know whether a given moduli space is an affine variety. For the case of instanton bundles on $\pp^{2n+1}$ a positive answer was given in \cite{CO}.

Following \cite{CO}, we are going to call a matrix $A^t\in Hom((U\otimes I_1^*)\oplus (U^*\otimes I_2^*),W)$ \emph{degenerate} if there exists $u_1\in U$, $u_2\in U^*$, such that $u_2(u_1)=0$ and such that the induced linear map $A^t((-\otimes u_1)\oplus(-\otimes u_2)):I^*_1\oplus I^*_2\rightarrow W$ is non-injective. It is straighforward to show that $A\in W^*\otimes((I_1\otimes U)\oplus(I_2\otimes U^*))$ defines a surjective map if and only if $A^t$ is nondegenerate. Therefore we can conclude that $\fD_k^0=\{A\in \fD_k\mid A \text{ is injective and $A^t$ is nondegenerate}\}$. With these ingredients we are able to prove:

\begin{proposition}\label{affine-first}
$MI_F(1)$ is an affine variety.
\end{proposition}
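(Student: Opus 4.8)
The plan is to reduce the statement to the classical fact that the complement of a hypersurface in an affine variety (a \emph{principal} open set) is again affine. This is the exact analogue of the charge one case on $\pp^3$, where $MI_{\pp^3}(1)$ is the complement of the Pl\"ucker quadric inside $\pp^5$ and is affine for precisely this reason. Concretely, I would show that $\fD_1^{0}$ is a principal open subset of the affine variety $\fD_1$, and then pass to the quotient.

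First I would specialise the data of Theorem \ref{firstmonad} to $k=1$. Here $I_1\cong I_2\cong\cc$ and $\dim W=6$, so a point of $\fD_1$ is a pair $A=(a,b)$ with $a\colon W\to U$ and $b\colon W\to U^{*}$ subject to the quadratic self-duality constraint $AJA^{t}=0$, and the acting group is $G_1=Sp(W,J)\times\cc^{*}\times\cc^{*}$. Since $\fD_1$ is cut out by $AJA^{t}=0$ inside a vector space it is affine. Recall from the discussion preceding the statement that $\fD_1^{0}=\{A\in\fD_1\mid A^{t}\text{ nondegenerate}\}$, that $MI_F(1)=\fD_1^{0}/G_1$ is a geometric quotient, and that the $G_1$-orbits in $\fD_1^{0}$ are closed.

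The core of the argument is to produce a single polynomial $R$ on $\fD_1$ with $\fD_1^{0}=\{R\neq0\}$. The non-surjectivity of $A$ over $F$ means that at some $(u_1,u_2)\in F$ (so $u_2(u_1)=0$) the fibre map $A$ drops rank; surjectivity over all of $F$ is therefore governed by a resultant $R(A)$, a $G_1$-semi-invariant whose non-vanishing cuts out $\fD_1^{0}$. Granting this, $\fD_1^{0}=D(R)$ is a principal open subset of $\fD_1$, hence affine. Now $G_1$ is reductive and, by the cited result, acts on the affine variety $\fD_1^{0}$ with all orbits closed; consequently the categorical quotient $\fD_1^{0}/\!\!/G_1=\Spec\mathcal O(\fD_1^{0})^{G_1}$ is affine and, the orbits being closed, coincides with the geometric quotient $\fD_1^{0}/G_1=MI_F(1)$. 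This proves the claim.

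The hard part will be establishing that the boundary $\Delta:=\fD_1\setminus\fD_1^{0}$ is a hypersurface, equivalently that the resultant $R$ is a genuine defining equation rather than vanishing in higher codimension. A naive count is discouraging: the rank-drop locus of the $2\times6$ fibre map has codimension $(2-1)(6-1)=5$ over the threefold $F$, which would force $\Delta$ to have codimension two and would make $\fD_1^{0}$ non-affine. The essential input must therefore be the symplectic self-duality $\beta=\alpha^{\vee}\circ(q\otimes\mathrm{id})$: it pairs the rank-drop conditions so that the effective codimension of the degeneracy drops to one, exactly as the self-dual structure forces the boundary of $MI_{\pp^3}(1)$ to be the ample Pl\"ucker quadric. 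Carrying out this codimension-one verification for the two families of points of $F$, and checking $R\not\equiv0$ (guaranteed by the non-emptiness in Theorem \ref{existenceinst}), is the step I expect to require the most work.
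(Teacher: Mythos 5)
Your overall architecture coincides with the paper's: exhibit a single $G_1$-semi-invariant polynomial whose non-vanishing locus inside the affine variety $\fD_1$ is exactly $\fD_1^0$, conclude that $\fD_1^0$ is a principal affine open, and then pass to the quotient by the reductive group (the paper invokes Hilbert--Nagata at this last step, just as you do). The difficulty is that you never actually produce the polynomial: you posit a ``resultant'' $R$ detecting fibrewise rank drop of the $2\times 6$ map $A$ over the threefold $F$, you correctly observe that a naive codimension count for that degeneracy locus gives codimension two rather than one, and you defer the resolution to an unverified hope that self-duality collapses the codimension. As written, the key step is a conjecture, not a proof, and the object you propose to build it from is the wrong one.

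The paper's resolution is much simpler and is the observation you are missing: for $k=1$ one has $\dim W=4k+2=6=\dim(U\oplus U^*)$, so the global-sections map $H^0(A)\colon W\to U\oplus U^*$ is a \emph{square} $6\times 6$ matrix, and one takes $D(A):=\det H^0(A)$. This is automatically a single polynomial, so the codimension issue you worry about never arises. The equivalence $\fD_1^0=\{A\in\fD_1\mid D(A)\neq 0\}$ is then proved in two lines from the monad: if $A$ is degenerate the dual of $H^0(A)$ has a kernel, so $D(A)=0$; conversely if $A\in\fD_1^0$ it defines an instanton $\sE$, the display of the monad gives $H^0(\sK)=H^0(\sE)=0$, hence $H^0(A)$ is injective between six-dimensional spaces and $D(A)\neq 0$. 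If you want to salvage your write-up, replace the resultant by this determinant; the rest of your argument (principal open, reductivity, closed orbits, geometric quotient) then goes through as you describe.
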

\begin{proof}
Notice that in this case, the map of vector bundles $\tilde{A}:W\otimes\sO_F\rightarrow (I_1\otimes\sO_F(1,0))\oplus(I_2\otimes\sO_F(0,1))$ becomes, at the level of global sections, a square $(6\times 6)$-matrix associated to the map $A:W\rightarrow U\oplus U^*$. Let us denote by $D(A)$ the usual determinant of this map. Notice that $D$ is $G_k\times GL(U)$-invariant. Let us show that, for $A\in\fD_1$, $A\in\fD_1^0$ if and only if $D(A)\neq 0$.

\noindent First, obviously if $A$ is degenerate, then $D(A)=0$. Reciprocally, if $A\in\fD_1^0$ is defining an instanton bundle $\sE$ with associated first display
  $$
  0\rightarrow \sO_F(-1,0)\oplus\sO_F(0,-1)\rightarrow \sE\rightarrow \sK\rightarrow 0,
  $$
we obtain that $H^0(\sK)=H^0(\sE)=0$ and therefore, taking global sections at the second display
$$
  0\rightarrow H^0(\sK)=0 \rightarrow H^0(\sO_F^{6})\cong\mathbb{C}^6 \stackrel{A}{\rightarrow} H^0(\sO_F(1,0)\oplus\sO_F(0,1))\cong\mathbb{C}^6\rightarrow 0,
  $$
\noindent we obtain $A$ injective and $D(A)\neq 0$.

Therefore, we obtain that $\fD_1^0=\{A\in\fD_1\mid D(A)\neq 0\}$ is an affine variety $Spec(S)$. Therefore, by the famous Theorem of Hilbert and Nagata, $MI_F(1)=\fD_1^0/G_k=Spec(S^{G_k})$ is also affine.
\end{proof}

\begin{remark}
  \begin{itemize}
    \item[i)] In Theorem \ref{affine-two} we will obtain an stronger result concerning $MI_F(1)$. However, it seems that the approach used there can not be extended to deal with instanton bundles of higher charge.
    \item[ii)] On the other hand, the approach from Proposition \ref{affine-first}, namely, the correct identification of an invariant form $D$ on the vector space under consideration, is the one used in \cite[Theorem 1.2]{CO} (resp. \cite[Theorem C]{Fa2}) to prove that the moduli space of instantons of arbitrary charge on $\pp^3$ (resp. the three-dimensional quadric) is an affine variety. We state therefore the following conjecture.
  \end{itemize}
\end{remark}

\begin{conj}
$MI_F(k)$ is affine for any $k$.
\end{conj}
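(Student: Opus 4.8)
The plan is to mimic the argument given above for $k=1$ and, behind it, the treatment of instanton bundles on $\pp^{2n+1}$ in \cite{CO}: one wants to realize $\fD_k^0$ as a basic (principal) open subset of the affine variety $\fD_k$ and then invoke the theorem of Hilbert--Nagata. Now $\fD_k$ is affine, being the zero locus of the quadratic equations $AJA^t=0$ inside the vector space $W^*\otimes((I_1\otimes U)\oplus(I_2\otimes U^*))$, and the group $G_k=Sp(W,J)\times GL(I_1)\times GL(I_2)$ is reductive. Hence, exactly as in the case $k=1$, once we know that $\fD_k^0$ is affine the geometric quotient $MI_F(k)=\fD_k^0/G_k=\Spec\big(\sO(\fD_k^0)^{G_k}\big)$ will be affine as well. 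Since $\fD_k$ is affine, the affineness of $\fD_k^0$ reduces in turn to showing that the degeneracy locus $\fD_k\setminus\fD_k^0=\{A\in\fD_k\mid A^t\text{ is degenerate}\}$ is the zero scheme of a single regular function $R$ on $\fD_k$, for then $\fD_k^0=\{R\ne0\}$ is a principal affine open.

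To produce such an $R$ I would use elimination theory. Recall that $A\in\fD_k^0$ exactly when the sheaf map $A\colon W\otimes\sO_F\to(I_1\otimes\sO_F(1,0))\oplus(I_2\otimes\sO_F(0,1))$ is surjective, that is, when the fibre map is onto at every point $(u_1,u_2)\in F\subset\pp(U)\times\pp(U^*)$. Splitting $A=(A_1,A_2)$ with $A_1\in\Hom(W,I_1\otimes U)$ and $A_2\in\Hom(W,I_2\otimes U^*)$, non-surjectivity at $(u_1,u_2)$ is the vanishing of the maximal minors of the $(4k+2)\times 2k$ matrix obtained by contracting $A_1$ against $u_1$ and $A_2$ against $u_2$. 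The degeneracy locus is then the image under the first projection of the incidence variety $\Sigma=\{(A,(u_1,u_2))\in\fD_k\times F\mid A\text{ is not surjective at }(u_1,u_2)\}$, and $R$ should be the corresponding multidimensional resultant of this determinantal condition over $F$ --- the natural generalization of the determinant $D(A)$ that worked for $k=1$, where $A_1,A_2$ assemble into the square map $H^0(A)\colon W\to U\oplus U^*$.

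Two points then remain. First, $R$ must not vanish identically on $\fD_k$; this is guaranteed by the existence of surjective $A$, i.e.\ by the existence of $k$-instantons established in Theorem \ref{existenceinst}. Second --- and this is where the real difficulty lies --- one must show that $\{R=0\}$ is exactly the degeneracy locus, equivalently that this locus is pure of codimension one in $\fD_k$ and globally principal. Here a naive dimension count is discouraging: in the ambient tensor space the rank drop of a $(4k+2)\times 2k$ matrix has codimension $2k+3$, so that, even after absorbing the three parameters of $F$, the incidence $\Sigma$ would project to a subvariety of codimension $2k$, which for $k>1$ is far from a hypersurface. The crux of the problem is therefore to prove that restricting to $\fD_k$ changes this count drastically: the quadratic relation $AJA^t=0$ together with the symplectic self-duality of the monad --- which ties the failure of surjectivity of $A$ at a point to the failure of injectivity of $JA^t$ at the same point --- should force $\Sigma$ to have dimension $\dim\fD_k-1$ and to map generically finitely onto a divisor. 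Controlling this codimension uniformly in $k$, equivalently showing that $R$ restricts to a nonzero reduced equation cutting out the degeneracy divisor on $\fD_k$, is the main obstacle; granting it, $\fD_k^0=\{R\ne0\}$ is affine and Hilbert--Nagata yields that $MI_F(k)$ is affine.
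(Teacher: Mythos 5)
The statement you are asked to prove is stated in the paper only as a conjecture: the authors prove affineness for $k=1$ (where the relevant map on global sections $H^0(A)\colon W\to U\oplus U^*$ is a square $6\times 6$ matrix and the degeneracy locus is literally $\{D(A)=0\}$) and leave the general case open. Your proposal does not close that gap either, and you say so yourself: the entire argument is conditional on the claim that the degeneracy locus $\fD_k\setminus\fD_k^0$ is cut out on $\fD_k$ by a single nonvanishing regular function $R$, and you explicitly write ``granting it'' at the decisive moment. A strategy whose key step is granted rather than proved is not a proof, so this must be recorded as a genuine gap rather than as an alternative route.

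The gap is not merely one of exposition. For $k>1$ your candidate $R$ is not even well defined by the elimination-theoretic recipe you describe: a resultant exists as a single polynomial on the ambient space $W^*\otimes((I_1\otimes U)\oplus(I_2\otimes U^*))$ only when the projection of the incidence variety $\Sigma$ is a hypersurface there, and by your own codimension count that projection has expected codimension $2k>1$. So $R$ would have to be manufactured directly on $\fD_k$, using the relations $AJA^t=0$ and the self-duality of the monad in an essential way --- and that construction is precisely the missing content. Moreover, even if one proved that the degeneracy locus has pure codimension one in $\fD_k$, one would still need it to be \emph{principal}: $\fD_k$ is a priori neither irreducible nor factorial, so ``divisor'' does not automatically yield ``zero set of one function,'' which is what the Hilbert--Nagata reduction to $\Spec(\sO(\fD_k^0)^{G_k})$ requires. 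Your framing of the problem (reduce to principality of the degeneracy locus, use reductivity of $G_k$ and the existence results of Theorem \ref{existenceinst} to see $R\not\equiv 0$) is a sensible and correct reduction, consistent with the $k=1$ argument in the paper and with the treatment of $\pp^{2n+1}$ in \cite{CO}; but the conjecture remains unproved by it.
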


\section{Construction of instantons}
In this section we will construct, through an induction process, stable $k$-instanton bundles on the flag variety for each charge $k$. More concretely, we are going to prove the following

\begin{theorem}\label{existenceinst}
  Let $F\subset\pp^7$ be the flag variety. The moduli space of stable $k$-instanton bundles $MI_F^s(k)$ has a generically smooth irreducible component of dimension $8k-3$.
\end{theorem}

We will start describing completely the case of charge $1$  that is the base case for the induction. We give two approaches to this case: first we give a complete explicit characterization in terms of the monad defining them. Next, we give a more abstract construction using Serre correspondence. Finally, in the subsection \ref{sec-ind}, we take care of the induction step.

\subsection{Base case of induction.}
Let us consider the case of charge 1, i.e. a vector bundle which is cohomology of the monad
$$
\sO_F(-1,0) \oplus \sO_F(0,-1) \stackrel{A}{\longrightarrow} \sO_F^6  \stackrel{B}{\longrightarrow} \sO_F(1,0) \oplus \sO_F(0,1)
$$
The matrix $B$, up to a change of coordinates and action of the linear groups involved in the monad, is of the form
$$
\left[
\begin{array}{cccccccccccc}
x_0 & x_1 & x_2 & 0 & 0 & 0 \\
0 & 0 & 0 & y_0 & y_1 & y_2
\end{array}
\right]
$$
where we can use the coordinates $(x_0:x_1:x_2)\times(y_0:y_1:y_2)$ of the product $\mathbb{P}^2\times \mathbb{P}^2$. Notice that we can think the entries of the matrix in such coordinates, we just need to consider that they satisfy the equation defining the flag variety, which we can suppose to be of the form $x_0y_0 + x_1y_1 + x_2y_2=0$.

This means that $\ker B \simeq \sG_1(-1,0) \oplus \sG_2(0,-1)$ and the $1$-instanton bundle $\sE$ is given as the cokernel

$$
0 \longrightarrow\sO_F(-1,0) \oplus \sO_F(0,-1) \stackrel{A}{\longrightarrow} \sG_1(-1,0) \oplus \sG_2(0,-1)  \longrightarrow \sE\longrightarrow 0
$$
\noindent where the matrix $A$ is of the form
$$
A =
\left[
\begin{array}{cccccc}
f_1 & \gamma y_0\\
f_2 & \gamma y_1\\
f_3 & \gamma y_2\\
\delta x_0 & g_1\\
\delta x_1 & g_2\\
\delta x_2 & g_3
\end{array}
\right]
$$
where $(f_1\:\:\:f_2\:\:\:f_3)^t$ and $(g_1\:\:\:g_2\:\:\:g_3)^t$ are syzygies of, respectively, $(x_0\:\:\:x_1\:\:\:x_2)$ and $(y_0\:\:\:y_1\:\:\:y_2)$, and $\gamma,\delta \in \mathbb{C}$. Notice that it completely agrees with the fact that the family of charge 1 instanton is 5-dimensional; indeed, each of the two syzygies is 3-dimensional as vector space, and we still have to apply the action of the 2-dimensional linear group of the automorphisms of $\sO_F(-1,0) \oplus \sO_F(0,-1)$. In particular, $MI_F(1)$ is the open subset of

$$
\pp\Hom(\sO_F(-1,0) \oplus \sO_F(0,-1), \sG_1(-1,0) \oplus \sG_2(0,-1))\cong\pp(H^0(\sG_1)\oplus H^0(\sG_2))
$$
\noindent defined as the complement of the hypersurface given by the vanishing of the determinant of the following matrix,  where$f_i^{x_j}$ (resp. $g_i^{y_j}$) stands for the coefficient of the polynomial $f_i$ (resp. $g_i$) with respect the variable $x_j$ (resp $y_j$),

$$
A =
\left[
\begin{array}{ccccccccc}
f_1^{x_0} & f_1^{x_1} & f_1^{x_2} & \gamma   &0 & 0\\
f_2^{x_0} & f_2^{x_1} & f_2^{x_2} &   0      &\gamma   &0 \\
f_3^{x_0} & f_3^{x_1} & f_3^{x_2}   &0 & 0 & \gamma \\
\delta & 0 &0 & g_1^{y_0} & g_1^{y_1} & g_1^{y_2} \\
 0 &\delta & 0 & g_1^{y_0} & g_1^{y_1} & g_1^{y_2} \\
 0 &0 & \delta & g_3^{y_0} & g_3^{y_1} & g_3^{y_2}
\end{array}
\right]
$$
\noindent So we can state an improvement of Proposition \ref{affine-first}:
\begin{theorem}\label{affine-two}
The moduli space of $1$-instanton bundles $MI_F(1)$ is an affine irreducible smooth variety of dimension $5$.
\end{theorem}

The charge 1 instanton bundles $\sE$ restrict trivially on the generic conic and for the generic line for each of the two families. Moreover, we have $\Ext^2(\sE,\sE) = 0$, because they are Ulrich if stable. Therefore, we obtained the candidate to start the induction process described in the next section. Furthermore, the following result characterizes the semistable case.
\begin{proposition}
Let $\sE$ be a $1$-instanton on the flag variety $F$. Then the vector bundle $\sE$ is properly semistable if and only if at least one of two syzygies is zero. Moreover, it splits as $\sE\simeq \sO_F(-1,1) \oplus \sO_F(1,-1)$ if and only if both $f_i$'s and $g_i$'s are zero.
\end{proposition}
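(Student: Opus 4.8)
Since by Theorem \ref{firstmonad} the cohomology of any such monad is automatically $\mu$-semistable, the first assertion is really a criterion for \emph{stability}: I must show that $\sE$ is $\mu$-stable exactly when both syzygies are nonzero, equivalently that it is properly semistable exactly when $(f_1,f_2,f_3)=0$ or $(g_1,g_2,g_3)=0$. By the earlier Proposition characterizing the non-$\mu$-stable instantons (which for $k=1$ forces $l=\pm1$), together with Hoppe's criterium and the vanishing $H^0(\sE)=0$, stability can only fail through the nonvanishing of $H^0(\sE(-1,1))$ or of $H^0(\sE(1,-1))$. So the plan is to express these two groups in terms of the matrices $(f_i)$ and $(g_i)$.

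To compute $H^0(\sE(-1,1))$ I would twist the display of the monad by $(-1,1)$ and run the resulting long exact sequences. Using Proposition \ref{pLineBundle} and the Euler presentations $0\to\sG_1(-1,0)\to\sO_F^{\oplus3}\to\sO_F(1,0)\to0$ and $0\to\sG_2(0,-1)\to\sO_F^{\oplus3}\to\sO_F(0,1)\to0$, a direct check gives $H^\bullet(\sG_2(-1,0))=0$, $H^0(\sG_1(-2,1))=0$ and $H^1(\sG_1(-2,1))\cong H^0(\sO_F(0,1))$, so that the sequence collapses to
\[
0\to H^0(\sE(-1,1))\to H^1(\sO_F(-2,1))\xrightarrow{A^1}H^1(\sG_1(-2,1)),
\]
where the one-dimensional source is the contribution of the first column of $A$ and $A^1$ is induced by its component $\sO_F(-1,0)\xrightarrow{(f_i)}\sG_1(-1,0)$ (the $\sG_2$-component is killed because $H^1(\sG_2(-1,0))=0$). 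Thus $H^0(\sE(-1,1))=\ker A^1$, and I would identify $A^1$ via the tautological sequence $0\to\sO_F(0,-1)\xrightarrow{(y_i)}\sG_1(-1,0)\to\sO_F(-1,1)\to0$ coming from the Euler sequence: twisting it by $(-1,1)$ shows the quotient induces an isomorphism $H^1(\sG_1(-2,1))\xrightarrow{\sim}H^1(\sO_F(-2,2))$ under which $A^1$ becomes multiplication by the image $\bar f\in H^0(\sO_F(0,1))$ of $(f_i)$ in $\sO_F(-1,1)$. Now $\bar f=0$ iff $(f_i)=0$, because a factorization of $(f_i)$ through $\sO_F(0,-1)$ would be a section of $\Hom(\sO_F(-1,0),\sO_F(0,-1))=H^0(\sO_F(1,-1))=0$; hence, granting that multiplication by a nonzero $\bar f$ is injective on the one-dimensional $H^1(\sO_F(-2,1))$, I conclude $H^0(\sE(-1,1))\ne0$ iff $(f_i)=0$. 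The symmetric computation gives $H^0(\sE(1,-1))\ne0$ iff $(g_i)=0$, and combined with the reduction above this proves the first claim.

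For the splitting I would argue directly on the monad. When both syzygies vanish one has $\gamma,\delta\ne0$ (otherwise $A$ is not a subbundle map and $\sE$ is not locally free), and the image of $A$ becomes the direct sum $\gamma(y_i)\,\sO_F(0,-1)\oplus\delta(x_i)\,\sO_F(-1,0)$ placed in the two summands of $\ker B=\sG_1(-1,0)\oplus\sG_2(0,-1)$; the two tautological sequences then identify the cokernels as $\sO_F(-1,1)$ and $\sO_F(1,-1)$, so $\sE\cong\sO_F(-1,1)\oplus\sO_F(1,-1)$. Conversely, if $\sE$ splits in this way then both $H^0(\sE(-1,1))$ and $H^0(\sE(1,-1))$ are nonzero, whence by the previous paragraph both syzygies vanish.

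The step I expect to be the genuine obstacle is the injectivity of multiplication $H^1(\sO_F(-2,1))\xrightarrow{\cdot\bar f}H^1(\sO_F(-2,2))$ for $\bar f\ne0$ (equivalently $A^1\ne0$ when $(f_i)\ne0$). I would settle it by pushing forward along $p_2\colon F\to\pp^2$, whose fibres are lines on which $\sO_F(-2,b)$ restricts to $\sO_{\pp^1}(-2)$: then $R^0p_{2*}=0$ and $R^1p_{2*}\sO_F(-2,b)\cong\sO_{\pp^2}(b-1)$, so that $H^1(\sO_F(-2,b))\cong H^0(\pp^2,\sO(b-1))$ and the map above becomes multiplication by the linear form $\bar f$ on $\pp^2$, which is injective as soon as $\bar f\ne0$. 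Everything else is bookkeeping with Proposition \ref{pLineBundle} and the two Euler sequences.
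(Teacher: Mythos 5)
Your proof is correct, but it takes a genuinely different route from the paper's. Both arguments start from the same reduction: by the earlier classification of non-$\mu$-stable instantons (which for $k=1$ forces $l=\pm 1$) together with Hoppe's criterium, $\sE$ fails to be stable exactly when it is one of the two extensions of $\sO_F(\mp 1,\pm 1)$ by $\sO_F(\pm 1,\mp 1)$. From there the paper works directly at the level of sheaves: it builds a $3\times 3$ commutative diagram out of the two tautological sequences $0\to\sO_F(-1,0)\to\sG_2(0,-1)\to\sO_F(1,-1)\to 0$ and $0\to\sO_F(0,-1)\to\sG_1(-1,0)\to\sO_F(-1,1)\to 0$, whose right-hand column is the desired extension, and observes that the diagram exists precisely when $A$ is block-triangular for the decompositions of source and kernel bundle, i.e.\ when the relevant off-diagonal syzygy block vanishes. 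You instead detect the destabilizing sub-line-bundle cohomologically, computing $H^0(\sE(\mp 1,\pm 1))$ from the twisted display, identifying the obstruction as multiplication by the induced linear form $\bar f\in H^0(\sO_F(0,1))$ (resp.\ $\bar g$) on a one-dimensional $H^1$, and settling the injectivity by pushing forward along $p_2$. Your version costs more bookkeeping but makes the \emph{only if} direction fully explicit, whereas the paper's claim that the extension forces the commutative diagram (and hence the vanishing of the syzygy) is asserted rather tersely; the splitting statement is handled essentially identically in both. One small point worth recording: your computation attaches the extension $0\to\sO_F(1,-1)\to\sE\to\sO_F(-1,1)\to 0$ to the vanishing of $(f_i)$ (the block $\sO_F(-1,0)\to\sG_1(-1,0)$), while the paper's proof text attaches it to $(g_i)$; since the proposition is symmetric in the two syzygies this is immaterial for the statement, but given the displayed shapes of $A$ and $B$ your assignment is the consistent one.
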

\begin{remark}
Vanishing either the $f_i$'s or the $g_i$'s corresponds with the choice of one of two possible families given by the extension, i.e.
\begin{equation}\label{semiext}
\begin{array}{c}
0 \longrightarrow \sO_F(1,-1) \longrightarrow \sE \longrightarrow \sO_F(-1,1) \longrightarrow 0\vspace{0.5cm}\\
0 \longrightarrow \sO_F(-1,1) \longrightarrow \sE \longrightarrow \sO_F(1,-1) \longrightarrow 0
\end{array}
\end{equation}
\end{remark}
\begin{proof}
Let us recall the short exact sequences
\begin{equation}\label{canonicaltang}
\begin{array}{cccc}
0 \longrightarrow \sO_F(-1,0) \longrightarrow \sG_2(0,-1)  \longrightarrow \sO_F(1,-1) \longrightarrow 0\vspace{0.5cm}\\
0 \longrightarrow \sO_F(0,-1) \longrightarrow  \sG_1(-1,0) \longrightarrow \sO_F(-1,1) \longrightarrow 0\\
\end{array}
\end{equation}
Also recall that we have already proven that $\sE$ is semistable if only if it is one of the extensions described in  (\ref{semiext}). Consider the first one; it occurs if and only $\sE$ fits in the following diagram
$$
\xymatrix{
& 0 \ar[d] & 0 \ar[d] & 0 \ar[d]\\
0 \ar[r] & \sO_F(-1,0) \ar[r] \ar[d] & \sG_2(0,-1) \ar[d] \ar[r] & \sO_F(1,-1) \ar[r] \ar[d] & 0 \\
0 \ar[r] & \sO_F(-1,0)\oplus \sO_F(0,-1) \ar[r] \ar[d] & \sG_1(-1,0) \oplus \sG_2(0,-1) \ar[r] \ar[d] & \sE \ar[r] \ar[d] & 0\\
0 \ar[r] & \sO_F(0,-1) \ar[r] \ar[d] & \sG_1(-1,0) \ar[r] \ar[d] & \sO_F(-1,1) \ar[r] \ar[d] & 0\\
& 0 & 0 & 0
}
$$
We have the previous commutative diagram if and only if the syzygy given by $(g_1 \:\:\: g_2 \:\:\: g_3)$ is equal to zero.

Analogously, the other extension appears if and only if the syzygy given by $(f_1 \:\:\: f_2 \:\:\: f_3)$ is equal to zero.

Finally, it comes directly from the previous diagram that $\sE$ splits as $\sO(1,-1) \oplus \sO(-1,1)$ if and only if both syzygies vanish.
\end{proof}

\begin{example}
We show a specific example of instanton of charge 2, which we will know to be stable because its second Chern class is not a perfect square (see Proposition \ref{gieseker}). Indeed, consider the monad
$$
\sO_F(-1,0)^{\oplus 2} \oplus \sO_F(0,-1)^{\oplus 2} \stackrel{A}{\longrightarrow} \sO_F^{\oplus 10} \stackrel{B}{\longrightarrow} \sO_F(1,0)^{\oplus 2} \oplus \sO_F(0,1)^{\oplus 2}
$$
with the matrices defining the maps are given by
$$
A =
\left[
\begin{array}{ccccccccccccccccccccccccccc}
-y_0 & 0 & -x_0-x_2 & 0\\
-y_1 & 0 & -x_2 & -x_0\\
0 & 0 & 0 & -x_2\\
y_0 & y_2 & 0 & -x_1\\
y_2 & y_0 & 0 & 0 \\
y_0+y_1 & y_2 & 0 & -x_1\\
0 & y_1 & 0 & 0 \\
y_1 & 0 & 0 & x_0\\
0 & 0 & x_0 & x_1\\
-y_2 & 0 & x_0+x_1 & 0
\end{array}
\right]
\:\:\:\:\:
B =
\left[
\begin{array}{ccccccccccccccccccccccccccc}
x_0 & x_1 & 0 & 0 & 0 & 0 & 0 & 0 & x_0 & x_2\\
0 & x_0 & x_1 & 0 & 0 & 0 & 0 & x_0 & x_2 & 0\\
0 & 0 & -y_2 & -y_1 & 0 & 0 & y_2 & y_0 & 0 & 0\\
0 & 0 & 0 & -y_2 & -y_1 & y_2 & y_0 & 0 & 0 & 0
\end{array}
\right]
$$
As in the previous case, we denote by $(x_0:x_1:x_2)\times(y_0:y_1:y_2)$ the coordinates of the product $\mathbb{P}^2\times \mathbb{P}^2$, keeping in mind that they satisfy the equation $x_0y_0 + x_1y_1 + x_2y_2=0$ of the flag variety.
\end{example}

\subsection{Alternative proof of the existence of $1$-instantons}

In this subsection, using Serre correspondence, we are going to show the existence of stable $1$-instanton bundles with trivial splitting type on the general line from both families.
The proof will be similar to the one given on \cite{Fa2}.

\begin{theorem}\label{existencechargeone}
  Let $F$ be the flag variety. Then there exists a family of dimension $5$ of stable $1$-instantons $\sE$ on $F$. Moreover, they have trivial splitting type on the generic conic and line (from both families). Indeed, $\sE(1,1)$ are Ulrich bundles on $F$.
\end{theorem}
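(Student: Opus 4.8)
The plan is to establish the existence of stable $1$-instanton bundles via Serre correspondence, following the strategy of \cite{Fa2}. The key observation is that a rank $2$ bundle $\sE$ with $c_1(\sE)=0$ and the instantonic vanishing can be reconstructed from a suitable zero-dimensional subscheme $Z\subset F$ via an extension
\begin{equation*}
0\to\sO_F\to\sE(a,b)\to\sI_Z(2a,2b)\to 0,
\end{equation*}
where the twist is chosen so that the extension class lives in an appropriate $\Ext^1$ group; the Cayley-Bacharach condition on $Z$ guarantees that the middle term is locally free. First I would identify, for charge $k=1$ where $c_2(\sE)=h_1h_2$, exactly which length and geometry the subscheme $Z$ must have so that the resulting $\sE$ has the correct Chern classes. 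Since $c_2(\sE)=h_1h_2$ is the class of a conic, the natural candidate is to take $Z$ to be a single reduced point, or more precisely a subscheme whose class matches after the appropriate twist, and then verify via Hirzebruch-Riemann-Roch (using \eqref{eulerchar}) that the Chern class bookkeeping is consistent.

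Next I would produce the extension concretely. The main computational task is to show that the relevant $\Ext^1(\sI_Z(c,d),\sO_F)$ is nonzero and that a general element yields a \emph{locally free} sheaf $\sE$; this is where the Cayley-Bacharach property of $Z$ must be checked, so that no local obstruction forces $\sE$ to be merely reflexive. Having built $\sE$, I would then verify the three defining properties of a $1$-instanton: that $c_1(\sE)=0$ and $c_2(\sE)=h_1h_2$ (from the extension sequence and the Chern class computations already recorded in the Preliminaries), and that $H^1(\sE(-1,-1))=0$. The cohomological vanishing should follow by twisting the defining sequence and invoking Proposition \ref{pLineBundle} to control $H^i(\sO_F(\alpha,\beta))$ together with the cohomology of $\sI_Z$.

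For \emph{stability} I would use the Hoppe-type criterion already stated: it suffices to show $H^0(F,\sE(p,q))=0$ for all $p+q\le 0$ (with strict inequality for stability), and for a general choice of $Z$ and of the extension class this vanishing should hold, ruling out the split or strictly semistable cases characterized earlier in Proposition \ref{stabquad} (recall $k=1$ is not a nonzero perfect square, so genericity should land us at a stable bundle). The claims about trivial splitting on the generic line of each family and on the generic conic follow from the already-proven Proposition on conic splitting together with semicontinuity, as remarked in the text, while the Ulrich assertion for $\sE(1)$ reduces to checking $H^\bullet(\sE(1)(-j,-j))=0$ for the required range, again via Proposition \ref{pLineBundle} applied to the twisted extension. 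Finally I would account for the dimension: the family is cut out by the choice of $Z$ together with the extension data modulo automorphisms, and I would count parameters to confirm the expected dimension $5$, matching the monad-theoretic count given in the Base case subsection. The hard part will be the locally-free check (the Cayley-Bacharach verification) and pinning down the precise subscheme $Z$ and twist so that \emph{all three} instanton conditions hold simultaneously for a genuinely $5$-dimensional family rather than a smaller special locus.
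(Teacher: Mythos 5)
Your overall strategy (Serre correspondence, Hoppe's criterion for stability, semicontinuity for the splitting type, a parameter count for the dimension) is the same as the paper's, but the core of your construction is set up incorrectly and would fail. You propose to build $\sE$ as an extension $0\to\sO_F\to\sE(a,b)\to\sI_Z(2a,2b)\to 0$ with $Z$ a \emph{zero-dimensional} subscheme, invoking Cayley--Bacharach. That is the surface version of the Serre construction. On the threefold $F$, the zero locus of a section of a rank $2$ bundle has codimension $2$, i.e.\ it is a \emph{curve}, and its class equals $c_2$ of the twisted bundle; a zero-dimensional $Z$ has codimension $3$, so $\mathcal{E}xt^1(\sI_Z,\sO_F)\cong\mathcal{E}xt^2(\sO_Z,\sO_F)=0$, every such extension is locally split near $Z$, and the middle term can never be locally free. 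No choice of $Z$ of dimension zero and no Cayley--Bacharach-type condition can rescue this. The paper instead twists so that $c_2(\sE(1,1))=4h_1h_2$ and takes $D$ a smooth elliptic curve of degree $8$ (a general deformation of a curve of class $3l-e_1$ on a hyperplane section of $F$), obtaining $\sE(1,1)$ from $\Ext^1(\sI_{D\mid F}(2,2),\sO_F)=H^1(\omega_D)=\cc$; local freeness is automatic because $D$ is l.c.i.\ and subcanonical, the Ulrich property is checked via the minimal $\sO_{\pp^7}$-resolution, and the dimension $5$ comes from $\dim Hilb^{8t}(F)=16$ at $D$ minus $\dim\pp(H^0(\sE(1)))=11$.

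A second, smaller error: you write that ``$k=1$ is not a nonzero perfect square,'' but $1=1^2$, so strictly $\mu$-semistable $1$-instantons do exist (they are exactly the extensions of $\sO_F(-1,1)$ by $\sO_F(1,-1)$ and vice versa). Genericity alone does not dispose of them without an argument; the paper compares dimensions, noting that $\ext^1(\sO_F(1,-1),\sO_F(-1,1))=h^1(\sO_F(-2,2))=3$, so the properly semistable locus has dimension $2$ and cannot fill the $5$-dimensional family.
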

\begin{proof}
  Let us consider $S$ a hyperplane section of $F$. $S$ is a del Pezzo surface of degree $6$ embedded by the anticanonical line bundle $H_{F_{\mid S}}:=-K_{S}$. Therefore, $S$ can be seen as the blow-up at three generic points of $\pp^ 2$. Using standard terminology, $-K_S=3h-e_1-e_2-e_3$ where $l$ denotes the pullback of the class of a line in $\pp^2$ and $e_i$ are the exceptional divisors. Let us consider a general curve $C$ of class $3l-e_1$. It is a smooth elliptic curve of degree $8$. using the short exact sequence that relates the normal bundles:
$$0\arr\sN_{C,S}\arr\sN_{C,F}\arr\sN_{{S,F}_{\mid C}}\arr 0$$
\noindent we obtain
$$0\arr\sO_C(C)\arr\sN_{C,F}\arr\sO_C(H_C)\arr 0.$$
\noindent By easy Riemann-Roch computations, we obtain $\hh^0(\sN_{C,F})=16$ and $\hh^1(\sN_{C,F})=0$. Deformations theory tells us that the Hilbert scheme $Hilb^{8t}(F)$ of curves on $F$ with Hilbert polynomial $p(t):=8t$ is smooth of dimension $16$ at the point $[C]$.

Take a general deformation $D$ of $C$. It will be a smooth non-degenerate elliptic curve with Chern class $[D]=4h_1h_2\in A^*(F)$. Now, a non-zero element of
$$
\Ext^1(\sI_{D\mid F}(2,2),\sO_F)\cong\Ext^1(\sI_{D\mid F},\omega_F)\cong\HH^1(\omega_D)\cong\cc
$$
\noindent provides, through Serre correspondence, an unique extension

\begin{equation}\label{serre}
  0\arr\sO_F\arr\sG\arr\sI_{D\mid F}(2,2)\arr 0.
\end{equation}

Let us show that $\sG$ is an Ulrich sheaf, using its characterization by the minimal $\sO_{\pp^7}$-resolution (see \cite[Prop. 2.1]{ESW}). In order to do this, we will use that, being $F$ and $D$ quasi-minimal varieties on $\pp^7$, we know their minimal $\sO_{\pp^7}$-resolution. Indeed,
$$
0\arr\sO_{\pp^7}(-6)\arr\sO_{\pp^7}(-4)^{9}\arr\sO_{\pp^7}(-3)^{16}\arr\sO_{\pp^7}(-2)^9\arr\sO_{\pp^7}\arr\sO_F\arr 0,
$$
\noindent and

$$
0\arr\sO_{\pp^7}(-8)\arr\sO_{\pp^7}(-6)^{20}\arr\sO_{\pp^7}(-5)^{64}\arr\sO_{\pp^7}(-4)^{90}\arr\sO_{\pp^7}(-3)^{64}\arr
$$
$$\arr\sO_{\pp^7}(-2)^{20}\arr\sO_{\pp^7}\arr\sO_D\arr 0.
$$

\noindent Applying the mapping cone Theorem to the short exact sequence (which is exact due to $F$ being aCM):
$$
0\arr\sI_{F\mid\pp^7}\arr\sI_{D\mid\pp^7}\arr\sI_{D\mid F}\arr 0
$$

\noindent we obtain the resolution of $\sI_{D\mid F}$:

$$
0\arr\sO_{\pp^7}(-8)\arr\sO_{\pp^7}(-6)^{21}\arr\sO_{\pp^7}(-5)^{64}\arr\sO_{\pp^7}(-4)^{81}\arr\sO_{\pp^7}(-3)^{48}\arr
$$
$$\arr\sO_{\pp^7}(-2)^{11}\arr\sI_{D\mid F}\arr 0.
$$
Finally, applying the horseshoe Lemma to (\ref{serre}), $G$ has a linear $\sO_{\pp^7}$-resolution

$$
0\arr\sO_{\pp^7}(-4)^{12}\arr\sO_{\pp^7}(-3)^{48}\arr\sO_{\pp^7}(-2)^{72}\arr\sO_{\pp^7}(-1)^{48}\arr
$$
$$\arr\sO_{\pp^7}^{12}\arr\sG\arr 0,
$$
\noindent namely $\sG$ is an Ulrich rank $2$ bundle on $F$ with Chern classes $c_1(\sG)=2h_1+2h_2$ and $c_2(\sG)=4h_1h_2$. Now, if we define $\sE:=\sG(-1,-1)$, $\sE$ is an $1$-instanton bundle on $F$. Let us now show that $\sE$ sits on a generically smooth component of $MI(1)$ of dimension $5$. Namely, we compute the dimensions $\ext^i(\sE,\sE) (=\hh^i(\sE\otimes\sE)$ thanks to  $\sE^{\vee}\cong\sE$). First of all, we know from Lemma \ref{simple} that $\sE$ is simple: $\ext^0(\sE,\sE)=1$. On the other hand, tensoring the exact sequence (\ref{serre}) twisted by $\sO_F(-1,-1)$ with $\sE$ and considering  the cohomology vanishings of an Ulrich bundle, we get $\hh^i(\sE\otimes\sE)=\hh^i(\sE\otimes\sI_{D\mid F}(1,1))$. Moreover, from

$$
0\arr\sE\otimes\sI_{D\mid F}(1,1)\arr\sE(1,1)\arr\sE\otimes\sO_D(1,1)\arr 0
$$

we get $\hh^2(\sE\otimes\sI_{D\mid F}(1,1))=\hh^{1}(\sE\otimes\sO_D(1,1))$. But $\sE\otimes\sO_{D}(1,1)\cong\sN_{D,F}$ and $D$, being a general deformation of $C$, verifies $\hh^1(\sN_{D,F})=0$ and $\hh^0(\sN_{D,F})=16$ from where we obtain the result.

Let us observe that, alternatively, the dimension of the component of $MI(1)$ where $\sE$ sits could have also been computed using that, through Serre correspondence, there is a bijection, at least locally, between pairs $(\sE(1,1), s)$ -where $\sE(1,1)$ is an Ulrich bundle and $s\in\pp(\HH^0(\sE(1,1)))$ is a non-zero global section- and pairs $(D,t)$ where $D\subset F$ is an elliptic normal curve and $0\neq t\in\pp(\Ext^1(\sI_{D\mid F}(2,2),\sO_F))$ . Therefore, since $\hh^0(\sE(1,1))=12$, $\Ext^1(\sI_{D\mid F}(2,2),\sO_F)\cong\cc$ and $Hilb^{8t}(F)$ has dimension $16$ at $D$, we get that the dimension of the considered component of $MI(1)$ is $5$.

 We know that Ulrich bundles are semistable (see \cite{CHGS}).  Let us now show that the generic Ulrich bundle constructed above is stable: otherwise, by Hoppe's criterion, it would fit in a short exact sequence of the form:
  $$
  0\longrightarrow \sO_F(l,-l)\longrightarrow\sE\longrightarrow\sO_F(-l,l)\longrightarrow 0
  $$
\noindent with $l\in\Z$. Comparing Chern classes, it turns out that $l\in\{1,-1\}$. But properly semistable bundles sitting on one of these two exact sequences form families of dimension $2$, from the easy calculation $\ext^1(\sO_F(1,-1),\sO_F(-1,1))=\hh^1(\sO_F(-2,2))=3$.

It only remains to prove the trivial splitting type with respect to lines and conics on $F$. Let us start with the conic case. For this, let us consider a generic conic $A$ with class $l-e_1$ on the del Pezzo surface $S$. It will cut $C$ on two points $\{x,y\}$. Tensoring the short exact sequence
$$
0\arr\sI_{C\mid F}(1,1)\arr\sO_F(1,1)\arr\sO_C(1,1)\arr 0
$$

\noindent by $\sO_{A}$ we see that $\sI_{C\mid F}(1,1)\otimes\sO_{A}\cong\sO_A\oplus\sO_{\{x,y\}}$. Tensoring again (\ref{serre}) by $\sO_A(-1,-1)$ we get a surjection $\sE_{|A}$ to $\sO_A\oplus\sO_{\{x,y\}}$. Therefore, the only possibility is $\sE_{|A}\cong\sO_A^2$. So, by semicontinuity, the same will be true for the $1$-instanton bundle associated to a generic deformation $D$ of $C$ inside $Hilb^{8t}(F)$.

For the case of a general line $L$ on any of the two families $\pp^2$, we should change slightly the argument: in this case, we start with a smooth elliptic curve $C$ of degree $8$ on the del Pezzo $S$ from the linear system $5l-3e_1-2e_2-2e_3$. Now, the line $L$ with class $l-e_2-e_3$ will cut $C$ on a single point $x$ and therefore $\sI_{C\mid F}(1,1)\otimes\sO_{L}\cong\sO_L\oplus\sO_{\{x\}}$. Now the previous argument tells us that $\sE_{|L}\cong\sO_L^2$. Notice that, fixed the line $L$ on $S$, we have freedom on the choice
of the presentation of $S$ as a blow-up to assure that $L$ has class $l-e_2-e_3$. Therefore, this argument covers both families of lines on $F$.

\end{proof}

\subsection{The induction step}\label{sec-ind}
We complete here, by induction, the proof of Theorem \ref{existenceinst}. So let us suppose that this theorem is true for instantons of charge $k$, namely we can take an instanton bundle $\sE$ on the flag variety with $c_2(\sE) = k h_1 h_2$ and $c_1(\sE)=0$, stable and satisfying $h^2(\sE \otimes \sE) = 0$, belonging to a family of dimension $8k-3$ . Recall that $\sE$ has trivial restriction on the generic conic, as shown in Proposition \ref{trivial-conic}.

Consider the following short exact sequence

\begin{equation}\label{indstep}
0 \longrightarrow \sF \longrightarrow \sE \longrightarrow \sO_{C}(1) \longrightarrow 0
\end{equation}
which gives us a torsion free sheaf $\sF$ with $c_2(\sF) = (k+1)h_1 h_2$, $c_1(\sF) = c_3(\sF) = 0$ and $H^1(\sF(-1,-1))=0$. Moreover for the generic conic $C$ in the flag variety, we have trivial restrictions, i.e. $\sF_{|C} \simeq \sE_{|C} \simeq \sO_C^{\oplus 2}$. Applying $\Hom(\sE, -)$ and $\Hom(-,\sF)$ to (\ref{indstep}), we obtain $\Ext^2(\sF,\sF)=0$. Indeed, applying the first functor we get
$$
0 = \Ext^1(\sE,\sO_C(1)) \simeq H^1(\sO_C(1)^2) \rightarrow \Ext^2(\sE,\sF) \rightarrow \Ext^2(\sE,\sE) = 0
$$
and applying the second functor
$$
\Ext^2(\sE,\sF) \rightarrow \Ext^2(\sF,\sF) \rightarrow \Ext^3(\sO_C(1),\sF).
$$
By Serre duality $\Ext^3(\sO_C(1),\sF) \simeq \Hom(\sF, \sO_C(1) \otimes \sO_F(-2,-2))$ and, if the last group is not zero, we would have a homomorphism when restricting to the conic. Consider therefore the restriction of (\ref{indstep}) to $C$, getting
$$
0 \rightarrow \mathcal{T}or(\sO_C, \sO_C(1)) \rightarrow \sF_{|C} \rightarrow \sO_C^2 \rightarrow \sO_C(1) \rightarrow 0
$$
and applying $\Hom(-,\sO_C(-3))$ we get
$$
0 = \Hom(\sO_C(-1),\sO_C(-3)) \rightarrow \Hom(\sF_{|C},\sO_C(-3)) \rightarrow \Hom(\mathcal{T}or(\sO_C, \sO_C(1)),\sO_C(-3)) =0
$$
Putting all together, we get that $\Ext^2(\sF,\sF)=0$. Moreover, $\sF$ is stable, being a subsheaf of $\sE$ with the same slope.\\
Our goal will be to prove that inside $M_F^s(2,0,(k+1)h_1h_2,0)$, the moduli space of stable rank 2 sheaves with those fixed Chern classes, $\sF$ can be deformed to a vector bundle.
In order to know the local dimension of $M_F^s(2,0,(k+1)h_1h_2,0)$ at the point $[\sF]$, we would like now to compute
$$
1- \ext^1(\sF,\sF) = \chi (\sF,\sF) = \chi(\sE,\sF) - \chi(\sO_{C}(1),\sF).
$$
Directly from (\ref{indstep}), we compute $\chi(\sE,\sF) = \chi(\sE,\sE) - \chi(\sE,\sO_C(1))= (4-8k) - 4 = -8k$ and $\chi(\sO_{C}(1),\sF) = \chi(\sO_{C}(1),\sE) - \chi(\sO_{C}(1),\sO_C(1)) = 4 - 0$,  using the formula in (\ref{dim-chi-end}), the restriction of $\sE$ on the conic and finally the resolution of the structure sheaf $\sO_C$. We can conclude that $\ext^1(\sF,\sF) = 8k+5$.\\

Let us denote by $\tilde{\sF}$ a general deformation of $\sF$. Being $\tilde{\sF}$ a torsion free sheaf, we have a canonical short exact sequence
$$
0\longrightarrow \tilde{\sF} \longrightarrow \tilde{\sF}^{**} \longrightarrow T \longrightarrow 0
$$
where $T$ is a sheaf supported on a scheme of dimension at most 1. All the required cohomological properties for the subsequent steps will still hold because of the semicontinuity of the dimension of the $\Ext$ groups when considering a deformation in the moduli space of stable rank 2 vector bundles with fixed Chern classes.

The stability of $\tilde{\sF}$ implies $h^0(\tilde{\sF}^{**}(-1,-1)) =0$. On the other hand, being $h^1(\tilde{\sF}(-1,-1))=0$ directly form the definition of $\sF$ and the cohomological instanton condition on $\sE$, we get that $h^0(T(-1,-1))=0$. This tells us that the support of $T$ does not have isolated points, i.e. it is of pure dimension 1 when non empty. Let us  observe that we can suppose that its support is not empty, otherwise $\tilde{\sF}$ would already be a $(k+1)$-instanton bundle, as wanted. Our next goal is to compute $c_2(T)$ to get the degree of such support. First of all, let us notice that $\tilde{\sF}^{**}$ is a 2-rank reflexive sheaf on the flag variety, therefore $c_3(\tilde{\sF}^{**}(\alpha_1,\alpha_2))$ is invariant for each twist by a line bundle $\sO_F(\alpha_1,\alpha_2)$. We will denote such Chern class by $c$. Recall that, by Lemma \ref{Chern-reflexive-flag}, we have $c\geq 0$. Moreover, also $c_2(T(\alpha_1,\alpha_2))$ is invariant by twist, indeed it is minus the class on the Chow ring of the support of the sheaf $T$.

Considering the relation
$$
c_3(T(\alpha_1,\alpha_2)) = c - 2(\alpha_1 h_1 + \alpha_2 h_2)c_2(T)
$$
and using the Hirzebruch-Riemann-Roch formula, we compute
$$
\chi(T(\alpha_1,\alpha_2)) = \frac{1}{2}c - c_2(T)\left((1+\alpha_1)h_1 + (1+\alpha_2)h_2\right).
$$
Choosing $\alpha_1 = \alpha_2 = -1$ we get
$$
-\frac{1}{2} c = -\chi(T(-1,-1)) =h^1(T(-1,-1)) \geq 0,
$$
hence $c=0$, which means that $\tilde{\sF}^{**}$ is locally free.
Denoting $c_2(T) = \beta_1 h_1^2 + \beta_2 h_2^2$ and recalling that $\beta_1 + \beta_2<0$, being related to minus the degree of the support of $T$, we can suppose, without loss of generality, that $\beta_2<0$. Taking a pair of negative $\alpha_1,\alpha_2$ with $\alpha_1+\alpha_2\ll 0$ and $\alpha_1\ll \alpha_2$, we obtain the following cohomological relations
$$
-\chi(T(\alpha_1,\alpha_2)) = h^1(T(\alpha_1,\alpha_2)) \leq h^2(\tilde{\sF}(\alpha_1,\alpha_2)) \leq h_1(\sO_{C}(\alpha_1+\alpha_2+1))
$$
where the inequality on the middle holds because the sheaf $\tilde{\sF}^{**}$ is locally free and the right equality follows from the semicontinuity of the dimension of the $\Ext$ groups. We thus obtain
$$
(1 + \alpha_1)\beta_2 + (1 + \alpha_2)\beta_1 \leq - \alpha_1 - \alpha_2 - 2
$$
that gives us $\beta_2 =  -1$. Therefore $\beta_1 <1$, and we can exclude all cases $\beta_1 \leq -2$, taking $\alpha_2 \ll 0$, hence $\beta_1=-1,0$.

This means that if $\tilde{\sF}$ is not locally free, it would fit in one of the following short exact sequences: if $\beta_1=\beta_2 = -1$ we have
\begin{equation}\label{deform-conic}
0\longrightarrow \tilde{\sF} \longrightarrow \tilde{\sF}^{**} \longrightarrow \sO_{C}(1) \longrightarrow 0
\end{equation}
or
\begin{equation}\label{deform-degconic}
0\longrightarrow \tilde{\sF} \longrightarrow \tilde{\sF}^{**} \longrightarrow \sO_{L_1} \oplus \sO_{L_2} \longrightarrow 0;
\end{equation}
if $\beta_1 =0$ and $\beta_2 =-1$ (resp., if $\beta_1 =-1$ and $\beta_2 =0$) we have
\begin{equation}\label{deform-line2}
0\longrightarrow \tilde{\sF} \longrightarrow \tilde{\sF}^{**} \longrightarrow \sO_{L_2} \longrightarrow 0,
\end{equation}
or, respectively
\begin{equation}\label{deform-line1}
0\longrightarrow \tilde{\sF} \longrightarrow \tilde{\sF}^{**} \longrightarrow \sO_{L_1} \longrightarrow 0.
\end{equation}

We have shown that, if the deformation $\tilde{\sF}$ is not locally free, than it always fits into one of the four exact sequences described in (\ref{deform-conic}), (\ref{deform-degconic}), (\ref{deform-line2}) and (\ref{deform-line1}).\\
Consider the first case. The family defined by such a short exact sequences has dimension $8k+4$. Indeed, this can be computed observing that the dimension of the moduli of $\tilde{\sF}^{**}$, which is a $k$-instanton, is equal $8k-3$. Then we have to add the choice of a conic $C$ (from a space of dimension $4$) plus a surjective morphism $\sE \rightarrow \sO_C(1)$ (i.e, an element of the space $\pp(H^0(\sO_C(1)^2))$, of dimension $3$).\\
Consider the second case. The family defined by such short exact sequences has dimension $8k+1$. Indeed, this can be computed observing that, here again, the dimension of the moduli of vector bundles $\tilde{\sF}^{**}$, which is a $k$-instanton, is equal to $8k-3$, $4$ given by the choice of the two lines $L_1$ and $L_2$, respectively of the first and second family in the flag variety, while a surjective morphism $\sE \rightarrow \sO_{L_1} \oplus \sO_{L_2}$ is still uniquely defined through the restriction $\sE_C \simeq \sO_C^2$.\\
Consider the third case. The family defined by such short exact sequence has dimension $8k+4$. Indeed, this can be computed observing that the dimension of the moduli of vector bundles $\tilde{\sF}^{**}$, with  $c_1(\tilde{\sF}^{**})=0$ and $c_2(\tilde{\sF}^{**})=kh_1^2 + (k+1)h_2^2$ has dimension $8k+1$, $2$ are the lines $L_2$ of the second family and $1$ is the dimension of $\mathbb{P}(H^0(\sE_{|L_2}))$, which represents the choice of one of the two summands of the trivial restriction in order to construct the surjection $\sE \rightarrow \sO_{L_2} $ .\\
The fourth case is completely analogous to the third one, and here again the family defined by such short exact sequence has dimension $8k+4$.\\
For any of the described cases, which we recall are all the possible ones for a non locally free deformation, the defined families do not fill all the possible deformations of the original sheaf $\sF$. We can conclude that we can always deform $\sF$ to a stable locally free sheaf $\tilde{\sF}$, with $c_1(\sF) =0$, $c_2(\sF)=(k+1)h_1h_2$ and $H^1(\tilde{\sF}(-1,-1))$, hence by definition it is a stable $(k+1)$-instanton bundle. It also has trivial splitting type on the generic conic. Moreover, we have constructed such $\tilde{\sF}$ in order to have $\Ext^2(\tilde{\sF},\tilde{\sF})=0$, therefore satisfies all our hypothesis to apply induction.

\section{Jumping rational curves}
Now we are going to define the notion of jumping conic. For instanton bundles on $\pp^3$, jumping lines have been thoroughly studied. Here we propose an analogous definition for conics on the flag variety that deals at once with the irreducible and reducible case:

\begin{definition}
  Let $\sE$ be an instanton bundle on the flag variety $F$. A conic $C\subset F$ (irreducible or not) is a jumping conic of type $(a,b)$ if it satisfies $H^1(\sE_{|C}(-1,0))=a$ and $H^1(\sE_{|C}(0,-1))=b$. $C$ is said to have trivial splitting  type when it has type $(0,0)$.
\end{definition}

Let us give some insight to our definition. Indeed, suppose first that $C\subset F$ is an irreducible conic, $C\cong\pp^1$. In that case, $\sO_F(-1,0)_{|C}=\sO_F(0,-1)_{|C}=\sO_C(-1)$ and for an instanton bundle $\sE$ we have $\sE_{|C}\cong\sO_C(-a)\oplus\sO_C(a)$ if and only if $H^1(\sE_{|C}(-1,0))=H^1(\sE_{|C}(0,-1))=a$ if and only if it is a jumping conic of type $(a,a)$.

On the other hand, for a reducible conic $C=L_1\cup L_2$ for lines $L_i$ intersecting transversely on a single point. In this case, it is well-known that $Pic(C)\cong \mathbb{Z}^2$, where the isomorphism is given by $\sL\to(deg_{L_1}(\sL),deg_{L_2}(\sL))$. Therefore, for an instanton $\sE$ on $F$ the restriction to $C$ is of the form $\sE_C\cong\sO_C(a,b)\oplus\sO_C(-a,-b)$  if and only if it is a jumping conic of type $(a,b)$.

\noindent Let us consider the exact sequence associated to a conic $C$
$$0\to\sO_F(-1,-1)\to\sO_F(0,-1)\oplus \sO_F(-1,0) \to  \sO_F\to\sO_C\to 0$$
Writing the above
sequence in families with respect to global sections of $\sO_F(0,-1)\oplus \sO_F(-1,0) $, we get the description
of the universal conic $\mathcal C\subset F\times H$

\begin{equation}\label{uni}0 \to\sO_F(-1,-1)\boxtimes\sO_H(-1,-1)\to\sO_F(0,-1)\boxtimes\sO_H(0,-1)\oplus \sO_F(-1,0)\boxtimes\sO_H(-1,0) \to  \sO_{F\times H}\to\sO_{\mathcal C}\to  0.\end{equation}
We denote by $\mathcal D_\sE$ the locus of jumping conics of
an instanton $\sE$, and by $i$
its embedding in $H$.

\begin{proposition}\label{jumpingdivisor}
Let $\sE$ be a $k$-instanton on $F$. Then $\mathcal D_{\sE}$ is a divisor of type $(k,k)$  equipped with a sheaf
$G$
fitting into
\begin{equation}\label{jump}0 \to\sO_H(-1,-1)^{\oplus k}\oplus \sO_H(-1,0)^{\oplus k}\to  \sO_H^{\oplus k}\oplus \sO_H(-1,0)^{\oplus k}\to i_*G\to  0.
\end{equation}

\end{proposition}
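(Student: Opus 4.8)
The plan is to exhibit $\mathcal D_\sE$ as the degeneracy locus of a morphism between vector bundles on $\mathcal H$ obtained by pushing the restriction of $\sE$ to the universal conic forward to $\mathcal H$. Denote by $\pi\colon F\times\mathcal H\arr\mathcal H$ and $\rho\colon F\times\mathcal H\arr F$ the two projections. By the definition of jumping conic, a point $[C]\in\mathcal H$ lies in $\mathcal D_\sE$ exactly when $H^1(\sE_{|C}(-1,0))\neq 0$; on a smooth conic $\sO_F(-1,0)_{|C}=\sO_F(0,-1)_{|C}=\sO_C(-1)$, so this single condition already records the full splitting type on the dense open locus of smooth conics. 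The natural sheaf detecting $\mathcal D_\sE$ is therefore $\mathcal J:=R^1\pi_*\big(\rho^*\sE(-1,0)\otimes\sO_{\mathcal C}\big)$, whose fibre at $[C]$ is $H^1(\sE_{|C}(-1,0))$ by base change; it is a torsion sheaf supported on the jumping locus, and I would identify $i_*G$ with (a presentation of) $\mathcal J$.

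To compute it I would tensor the Koszul resolution (\ref{uni}) of $\sO_{\mathcal C}$ with $\rho^*\sE(-1,0)$ and apply $\mathbf R\pi_*$. As $\pi$ is a projection from a product, the projection formula rewrites each term as $H^j(F,\sE(a,b))\otimes M$ for a line bundle $M$ on $\mathcal H$, so the computation reduces to twisted cohomology of $\sE$ on $F$. All the required groups were essentially found in the proof of Theorem \ref{secondmonad}; combined with $\sE\cong\sE^\vee$, $\omega_F=\sO_F(-2,-2)$ and Proposition \ref{pLineBundle} one gets $H^\bullet(\sE(-1,-1))=0$, $h^1(\sE(-1,0))=k$, $h^2(\sE(-2,-1))=k$, and the one genuinely new input $h^1(\sE(-2,0))=h^2(\sE(-2,0))=k$, which I would obtain by twisting the monad of Theorem \ref{secondmonad} by $\sO_F(-2,0)$ and chasing cohomology through its display with Proposition \ref{pLineBundle}. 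Feeding this into the hypercohomology spectral sequence of the resolution, the surviving $E_1$-terms are exactly $\sO_{\mathcal H}^{\oplus k}$ (from $H^1(\sE(-1,0))$), $\sO_{\mathcal H}(-1,-1)^{\oplus k}$ (from $H^2(\sE(-2,-1))$) and two copies of $\sO_{\mathcal H}(-1,0)^{\oplus k}$ (from $H^1$ and $H^2$ of $\sE(-2,0)$); the differentials assemble them into the two--term complex (\ref{jump}) presenting $i_*G$.

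It remains to identify $\mathcal D_\sE$ and its class. The morphism in (\ref{jump}) runs between bundles of equal rank $2k$, so its cokernel is supported on the degeneracy locus, cut out by the determinant, which is a section of $(\det\text{source})^\vee\otimes\det\text{target}=\sO_{\mathcal H}(2k,k)\otimes\sO_{\mathcal H}(-k,0)=\sO_{\mathcal H}(k,k)$; hence $[\mathcal D_\sE]=(k,k)$ provided this determinant is not identically zero. Here genericity of $\sE$ enters: by the already established triviality of the splitting on the generic conic, $\mathcal J$ vanishes at a generic point of $\mathcal H$, i.e. the morphism is generically an isomorphism, so $\mathcal D_\sE$ is a genuine divisor rather than all of $\mathcal H$, and $G$ is the induced torsion--free sheaf on it.

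I expect the main obstacle to be the cohomological bookkeeping of the spectral sequence: proving enough vanishing to collapse it to the four terms above, establishing $h^1(\sE(-2,0))=h^2(\sE(-2,0))=k$, and checking that the induced differentials are precisely the maps displayed in (\ref{jump}) with no hidden higher--differential correction affecting the support. A secondary point to handle with care is the reducible locus $F\subset\mathcal H$, where the two orders $(a,b)$ genuinely differ and the single twist $\sO_F(-1,0)$ must be shown, after passing to closures, to recover the correct divisor of type $(k,k)$.
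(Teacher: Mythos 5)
Your overall strategy is the same as the paper's: realize $\mathcal D_\sE$ as the support of $R^1q_*$ of $\sE(-1,0)$ restricted to the universal conic, compute this via the resolution (\ref{uni}), and read off the $(k,k)$ class from the determinant of a square matrix of line bundles on $\mathcal H$. The difference is in how the relative cohomology is computed: the paper applies the Fourier--Mukai functor term by term to the monad (\ref{mon2}) of Theorem \ref{secondmonad} --- each monad term, restricted to $\mathcal C$ and pushed forward, is a \emph{single} line bundle on $\mathcal H$ concentrated in one degree, independent of $\sE$ --- and then assembles the answer through the two short exact sequences of the display. You instead tensor (\ref{uni}) with $\rho^*\sE(-1,0)$ and reduce everything to $H^\bullet(F,\sE(a,b))$.

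This is where the gap is: your key new input $\hh^1(\sE(-2,0))=\hh^2(\sE(-2,0))=k$ is false for the \emph{generic} instanton. For $k=1$ one sees this two ways. Via the monad, the component $\sO_F(-1,0)\to\sG_2(0,-1)$ of $\alpha$ is generically $\delta$ times the canonical inclusion of (\ref{canonicaltang}); twisting $0\to\sO_F(-1,0)\to\sG_2(0,-1)\to\sO_F(1,-1)\to0$ by $(-2,0)$ and using $H^\bullet(\sO_F(-1,-1))=0$ shows that $H^2(\sO_F(-3,0))\to H^2(\sG_2(-2,-1))$ is an isomorphism, and the long exact sequence of the twisted display then gives $\hh^1(\sE(-2,0))=\hh^2(\sE(-2,0))=0$ whenever $\delta\neq0$. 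Equivalently, via the Serre construction (\ref{serre}), $H^1(\sE(-2,0))\cong H^0(\sO_D(h_2-h_1))$, a degree-zero and generically non-trivial line bundle on the elliptic curve $D$, hence zero. (Euler characteristic only forces $\hh^1(\sE(-2,0))=\hh^2(\sE(-2,0))$; the common value is $k$ minus the rank of the constant $k\times k$ block of the paper's matrix $\gamma$, which is generically invertible.) With the correct values your spectral sequence collapses to the shorter complex $\sO_{\mathcal H}(-1,-1)^{\oplus k}\to\sO_{\mathcal H}^{\oplus k}$ for generic $\sE$ --- still a $(k,k)$ determinant, so your conclusion about the divisor class survives, but you do not obtain the presentation (\ref{jump}) as stated. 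A secondary issue, which you flag but do not resolve, is that when a single Koszul term contributes in two cohomological degrees the surviving $E_1$-terms do not automatically "assemble into a two-term complex": there is a possible $d_2\colon E_2^{-2,2}\to E_2^{0,1}$ and an extension problem for the abutment. The paper's term-by-term computation avoids both difficulties, since every pushforward it encounters is concentrated in a single degree.
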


\begin{proof}
A conic $C$ is jumping for $\sE$ if and only if the point of $H$ corresponding to $C$ lies in the support of $R^1q_*p^*(\sE(-1,0))$.\\Let us consider the Fourier-Mukai transform
$\Phi=q_*(p^*(-\otimes\sO_F(-1,0)))$.
Let us apply $\Phi$ to the terms of the monad (\ref{mon2}). $$R^iq_*((p^*(\sO_F(-1,0)\otimes\sO_F(-1,0)))\cong R^iq_*((\sO_F(-2,0)\boxtimes\sO_H(0,0))).$$ By (\ref{uni}) tensored by $\sO_F(-2,0)\boxtimes\sO_H(0,0)$, since the only non zero cohomology on $F$ is $h^2(\sO_F(-3,0))=1$ we get $R^iq_*((p^*(\sO_F(-1,0)\otimes\sO_F(-1,0)))=0$ for $i\not=1$ and
$$R^1q_*((p^*(\sO_F(-1,0)\otimes\sO_F(-1,0)))\cong \sO_{H}(-1,0).$$

$$R^iq_*((p^*(\sO_F(0,-1)\otimes\sO_F(-1,0)))\cong R^iq_*((\sO_F(-1,-1)\boxtimes\sO_{H}(0,0))).$$ By (\ref{uni}) tensored by $\sO_F(-1,-1)\boxtimes\sO_{H}(0,0)$, since the only non zero cohomology on $F$ is $h^3(\sO_F(-2,-2))=1$ we get $R^iq_*((p^*(\sO_F(0,-1)\otimes\sO_F(-1,0)))=0$ for $i\not=1$ and
$$R^1q_*((p^*(\sO_F(0,-1)\otimes\sO_F(-1,0)))\cong \sO_{H}(-1,-1).$$

$$R^iq_*((p^*(\sO_F\otimes\sO_F(-1,0)))\cong R^iq_*((\sO_F(-1,0)\boxtimes\sO_{H}(0,0))).$$ By (\ref{uni}) tensored by $\sO_F(-1,0)\boxtimes\sO_{H}(0,0)$, since the cohomology on $F$ is all zero we get  for  any $i$
$$R^iq_*((p^*(\sO_F\otimes\sO_F(-1,0)))\cong 0.$$

$$R^iq_*((p^*(\sG_1(-1,0)\otimes\sO_F(-1,0)))\cong R^iq_*((\sG_1(-2,0)\boxtimes\sO_{H}(0,0))).$$ By (\ref{uni}) tensored by $\sG_1(-2,0)\boxtimes\sO_{H}(0,0)$, since the only non zero cohomology on $F$ is $h^1(\sG_1(-2,0))=1$ we get $R^iq_*((p^*(\sG_1(-1,0)\otimes\sO_F(-1,0)))=0$ for $i\not=1$ and
$$R^1q_*((p^*(\sG_1(-1,0)\otimes\sO_F(-1,0)))\cong \sO_{H}$$
$$R^iq_*((p^*(\sG_2(0,-1)\otimes\sO_F(-1,0)))\cong R^iq_*((\sG_2(-1,-1)\boxtimes\sO_{H}(0,0))).$$ By (\ref{uni}) tensored by $\sG_2(-1,-1)\boxtimes\sO_{H}(0,0)$, since the only non zero cohomology on $F$ is $h^2(\sG_2(-2,-1))=1$ we get $R^iq_*((p^*(\sO_F(0,1)\otimes\sO_F(-1,0)))=0$ for $i\not=1$ and
$$R^1q_*((p^*(\sG_2(0,-1)\otimes\sO_F(-1,0)))\cong \sO_{H}(-1,0).$$

Now if we apply $\Phi$ to the sequence
$$0 \to K \rightarrow  \sG_1(-1,0)^{\oplus k}\oplus \sG_2(0,-1)^{\oplus k}
\xrightarrow{\beta} \sO_F^{\oplus 2k-2} \to 0$$  we get $R^iq_*((p^*(K\otimes\sO_F(-1,0)))=0$ for $i\not=1$ and
$$R^1q_*((p^*(K\otimes\sO_F(-1,0)))\cong \sO_{H}(-1,0)\oplus\sO_{H}.$$

From
$$0 \to \sO_F(0,-1)^{\oplus k}\oplus \sO_F(-1,0)^{\oplus k} \rightarrow  K
\rightarrow E\to 0$$  we get

$$0 \to R^0q_*((p^*(\sE\otimes\sO_F(-1,0)))\to \sO_{H}(-1,0)^{\oplus k}\oplus\sO_{H}(-1,-1)^{\oplus k} \xrightarrow{\gamma} \sO_{H}(-1,0)^{\oplus k}\oplus\sO_{H}^{\oplus k}.$$ So $\gamma$ is a $(2k)\times (2k)$ matrix made by a $(k)\times (k)$ matrix linear in the first variables, a $(k)\times (k)$ linear in the second variables a $(k)\times (k)$ matrix of degree $0$ and a $(k)\times (k)$ matrix of bidegree $(1,1)$. Hence $\ker(\gamma)$ is zero and
$\coker(\gamma)$, which is $R^1q_*((p^*(\sE\otimes\sO_F(-1,0)))$, is an extension to $H$ of a rank 1 sheaf, which we call $\mathcal{G}$, on $\mathcal D_\sE$, that is a divisor of type $(k,k)$ given by the vanishing of the determinant of $\gamma$.

\end{proof}

\begin{remark}
  An  analogous proof to the one of the previous Proposition was obtain for instanton bundles on $\pp^3$ (see \cite[Theorem 2.2.4]{OSS}) and other Fano threefolds (see \cite[Prop 2.4 and Lemma 4.8]{Fa2}). Moreover, it works also for higher rank instanton bundles on $F$ whose restriction to a general conic is trivial.
\end{remark}

\noindent For properly semistable instanton bundles we can say much more about the divisor of jumping conics. For this recall that such an instanton fits on a short exact sequence (\ref{stabquad}) and in particular has charge $k=l^2$. We are going to see that the support of $\mathcal D_{\sE}$ is exactly the set of \emph{reducible conics}, namely conics of the form $C=L_1\cup L_2$ for lines $L_i$ intersecting transversely on a single point:

\begin{proposition}
  Let $\sE$ be a properly semistable $k$-instanton on $F$, with $k=l^2$. Then $C\in\mathcal D_{\sE}$ if and only if $C$ is reducible: $C=L_1\cup L_2$. In particular, if $\sE$ is defined as the extension
  $$
  0 \to \sO_F(l,-l) \to \sE \to \sO_F(-l,l) \to 0
  $$
  the possible splitting types are given by
  $$
  \sE_{|C}\cong\sO_C(l,-a) \oplus \sO_C(-l,a)
  $$
  with $0\leq a\leq l$.\\
  Moreover, it is possible to find properly semistable instanton bundles $\sE$ on the flag and $l+1$ different reducible conics $C_j$ on $F$, such that
  $$
  \sE_{|C_{j}}\cong\sO_{C_j}(l,-j) \oplus \sO_{C_{ j}}(-l,j),
  $$
  for $j=0,\ldots,l$, i.e. we have all the possible splittings.\\
  The family of properly semistable bundles given by the other extension behaves analogously.
\end{proposition}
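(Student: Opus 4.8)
The plan is to restrict the defining extension $0\to\sO_F(l,-l)\to\sE\to\sO_F(-l,l)\to0$ to conics, handling smooth and reducible conics separately, then to read off the splitting type on the reducible ones from the two rulings, and finally to realize every admissible type by choosing the extension class carefully. Since the quotient term is locally free, restriction to any conic $C$ stays exact. If $C\cong\pp^1$ is smooth then $h_1|_C=h_2|_C=\sO_C(1)$, so both outer terms become $\sO_C$ and the extension of $\sO_C$ by $\sO_C$ splits because $H^1(\sO_{\pp^1})=0$; thus $\sE|_C\cong\sO_C^{\oplus2}$ and $C\notin\mathcal{D}_{\sE}$. If instead $C=L_1\cup L_2$ is reducible, I would restrict to each ruling: on a line $L$ of class $h_2^2$ one has $\sO_F(l,-l)|_L=\sO_L(l)$ and $\sO_F(-l,l)|_L=\sO_L(-l)$, and since $\Ext^1(\sO_L(-l),\sO_L(l))=H^1(\sO_{\pp^1}(2l))=0$ the restricted sequence splits as $\sO_L(l)\oplus\sO_L(-l)$. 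As $l\ge1$ this already forces $\sE|_C\neq\sO_C^{\oplus2}$, so every reducible conic is jumping. Together with the Lemma identifying reducible conics with the points of $F\subset\mathcal{H}$, this proves the equivalence $C\in\mathcal{D}_{\sE}\iff C$ reducible, with support equal to $F$.

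For the splitting type I would restrict to the other ruling. On a line $L$ of class $h_1^2$ the sequence becomes $0\to\sO_L(-l)\to\sE|_L\to\sO_L(l)\to0$, and now $\Ext^1(\sO_L(l),\sO_L(-l))=H^1(\sO_{\pp^1}(-2l))\neq0$, so $\sE|_L\cong\sO_L(c)\oplus\sO_L(-c)$ with $c$ possibly nonzero. A short argument bounds $c$: if $c>l$ the maximal sub-line-bundle $\sO_L(c)$ would map to zero in the quotient $\sO_L(l)$, hence factor through $\sO_L(-l)$, which is impossible; so $0\le c\le l$. A rank two bundle on the nodal conic $C$ splits as a sum of two line bundles (as in the remark preceding the statement, which one also sees from the Bruhat decomposition of the gluing matrix at the node); comparing the degrees on the two components and using the sub-bundle $\sO_F(l,-l)|_C$ to pin down the pairing then gives $\sE|_C\cong\sO_C(l,-a)\oplus\sO_C(-l,a)$ with $a=c\in\{0,\dots,l\}$.

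It remains to realize all types, which I regard as the heart of the matter. I would reduce it to producing a single $\sE$, i.e. a single class $\xi\in\Ext^1(\sO_F(-l,l),\sO_F(l,-l))=H^1(\sO_F(2l,-2l))$, whose restrictions to the fibres $L_p$ of $p_1$ attain every value $c=0,\dots,l$: once a point $p_j\in\pp^2$ with $\sE|_{L_{p_j}}\cong\sO(j)\oplus\sO(-j)$ is found, any reducible conic having $L_{p_j}$ as a component realizes the type $a=j$. Pushing $\xi$ down along $p_1$ exhibits it as a section of a rank $2l-1$ bundle on $\pp^2$, with $c(p)$ controlled by the rank of the connecting (catalecticant-type) map $H^0(\sO_{L_p}(l))\to H^1(\sO_{L_p}(-l))$ attached to $\xi(p)$; the loci $Z_j=\{p:c(p)\ge j\}$ are then nested closed degeneracy loci. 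The hard part is precisely that for $l\ge2$ the top stratum $Z_l=\{\xi(p)=0\}$ has expected codimension $2l-1>2$, so a generic $\xi$ never attains $c=l$; one must instead build a special $\xi$ whose associated net of binary forms meets every catalecticant-rank stratum, arranging $Z_0\supsetneq Z_1\supsetneq\cdots\supsetneq Z_l\neq\emptyset$. I would construct such a $\xi$ explicitly, adapting the net to a chain of points realizing the successive ranks and checking nonemptiness stratum by stratum. The analogous statement for the other extension follows by exchanging the roles of the two rulings.
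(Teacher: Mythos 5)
Your first two paragraphs reproduce the paper's argument: restricting the defining extension to a smooth conic yields an extension of $\sO_C$ by $\sO_C$, which splits; on a reducible conic one ruling forces $\sO_L(l)\oplus\sO_L(-l)$ because $\Ext^1(\sO_L(-l),\sO_L(l))=H^1(\sO_{\pp^1}(2l))=0$, hence every reducible conic jumps; and on the other ruling the middle term is $\sO_L(c)\oplus\sO_L(-c)$ with $0\le c\le l$. Your bound on $c$ and the gluing discussion at the node are fine and in fact more detailed than what the paper writes.

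The problem is the last step, which you yourself call the heart of the matter and then do not carry out. You reduce the realization of all splitting types to producing a special class $\xi\in\Ext^1(\sO_F(-l,l),\sO_F(l,-l))=H^1(\sO_F(2l,-2l))$ whose associated degeneracy loci $Z_0\supsetneq\cdots\supsetneq Z_l$ are all nonempty, observe (correctly) that a generic $\xi$ will not do because $Z_l$ has expected codimension $2l-1>2$, and then write that you ``would construct such a $\xi$ explicitly \dots checking nonemptiness stratum by stratum.'' No construction is given, so the existence claim --- the only nontrivial assertion in the proposition --- remains unproven. Your degeneracy-locus framing also makes the problem look harder than it is: you do not need to control the strata over all of $\pp^2$, only to hit $l+1$ prescribed values at $l+1$ prescribed fibres.

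The paper closes this gap with elementary linear algebra. For a line $L$ in the relevant family, the Koszul resolution of $\sO_L$ twisted by $\sO_F(2l,-2l)$, combined with the vanishings of Proposition 1.1 (namely $H^\bullet(\sO_F(2l-2,-2l))=0$ and $H^i(\sO_F(2l-1,-2l))=H^i(\sO_F(2l,-2l))=0$ for $i\ne 1$), shows that the restriction map $\varphi_L\colon H^1(\sO_F(2l,-2l))\to H^1(\sO_L(-2l))=\Ext^1(\sO_L(l),\sO_L(-l))$ is surjective, from a space of dimension $(2l-1)(2l+1)$ onto one of dimension $2l-1$. Choosing distinct lines $L_0,\dots,L_l$ and, for each $j$, a class $\alpha_j\in H^1(\sO_{L_j}(-2l))$ whose extension has middle term $\sO_{L_j}(j)\oplus\sO_{L_j}(-j)$, the condition that $\varphi_{L_j}(\xi)$ land on $\alpha_j$ cuts out a linear subspace of codimension at most $2l-1$ in $\pp\bigl(H^1(\sO_F(2l,-2l))\bigr)$, and since $(l+1)(2l-1)\le(2l+1)(2l-1)-1$ the intersection of these $l+1$ subspaces is nonempty. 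Any $\xi$ in that intersection gives an instanton realizing every type $j=0,\dots,l$. You should replace your stratum-by-stratum plan with this codimension count, or else actually exhibit the special net of binary forms you allude to.
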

\begin{proof}
  Restricting the short exact sequence (\ref{stabquad}) that defines $\sE$ to an smooth conic $C$, we see that $\sE_{|C}\cong\sO_{C}^2$. On the other hand, suppose that $C=L_1\cup L_2$ with each of the $L_i$ belonging to one of the two families of lines on $F$.  Then for the suitable $L_i$ we have
  $$
  0\to\sO_{L_i}(l)\to \sE_{|L_i}\to \sO_{L_i}(-l)\to 0.
  $$
\noindent Then the sequence should split and $\sE_{L_i}\cong\sO_{L_i}(l)\oplus \sO_{L_i}(-l)$. For the other line, the corresponding exact sequence can have central term $\sE_{|L_j}\cong\sO_{L_j}(-a)\oplus \sO_{L_j}(a)$ for any $0\leq a\leq l$. \\

To conclude the proof, consider the semistable family, the other one is studied in the same way, given as extensions of type
$$
0 \rightarrow \sO_F(l,-l) \rightarrow \sE \rightarrow \sO_F(-l,l) \rightarrow 0.
$$
Restricting at the lines of second family, we have that $\sE_{|L_2}\simeq \sO_{L_2}(l) \oplus \sO_{L_2}(-l)$. For the other family of lines we have
$$
0 \rightarrow \sO_{L_1}(-l) \rightarrow \sE_{|L_1} \rightarrow \sO_{L_1}(l) \rightarrow 0,
$$
hence $\sE_{|L_1} \in \Ext^1(\sO_{L_1}(l),\sO_{L_1}(-l)) = H^1(\sO_{L_1}(-2l))$ and $\sE \in \Ext^1(\sO_F(-l,l),\sO_F(-l,-l)) = H^1(\sO_F(2l,-2l))$. The two extensions are related by the following exact sequence
$$
0 \rightarrow \sO_F(2l-2,-2l) \rightarrow \sO_F(2l-1,-2l) \rightarrow \sO_F(2l,-2l) \rightarrow \sO_{L_1}(-2l) \rightarrow 0
$$
From Proposition \ref{pLineBundle} we can compute that $H^i(\sO_F(2l-2,-2l)) =0$ for $i=0,1,2,3$ and $H^i(\sO_F(2l-1,-2l)) = H^i(\sO_F(2l,-2l)) = 0$ for $i=0,2,3$, we therefore have a surjective linear map
\begin{equation}\label{sur-splitt}
\varphi : H^1(\sO_F(2l,-2l)) \rightarrow H^1(\sO_{L_1}(-2l))
\end{equation}

\noindent between vector spaces of respective dimensions $(2l-1)(2l+1)$ and $2l-1$ and let us consider the associated linear projective map $\tilde{\varphi}$ between the associated projective spaces (of dimension one lower). Take lines $L_j$, $j=0,\dots,l$ from this family and points $\alpha_i\in\pp(H^1(\sO_{L_i}(-2l)))$ parameterizing the extension of type $\sO_{L_i}(-i)\oplus\sO_{L_i}(i)$ (recall that the isomorphism class of an extension is classified by this projective space). Then $A_i:=\tilde{\varphi}^{-1}(\alpha_i)$ will be a linear projective subspace of $\pp(H^1(\sO_F(2l,-2l)))$ of codimension $2l-1$. The intersection $\cap_{i=0}^l A_i$ will have codimension $(2l-1)(l+1)\leq (2l-1)(2l+1) -1$ and therefore will be nonempty. The elements of the intersection will parameterize $l^2$-instanton bundles with the desired restriction to the reducible conics $L_i\cup L_i^{'}$ ($L_i^{'}$ is the unique line from the other family that intersects $L_i$).

\end{proof}

Nevertheless, thanks to the following result, we can observe that the divisor of jumping conics of type at least $(-1,1)$ of the generic instanton does not coincide with the family of type at least $(-2,2)$. Hence, we have \emph{proper} jumping conics of type $(-1,1)$. This result should be compared to \cite{BH}.

We denote by $\mathcal S_\sE$ the locus of jumping conics of type at least $(-2,2)$ of an instanton $\sE$, and by $j$ its embedding in $H$.

\begin{proposition}
Let $\sE$ be a generic $k$-instanton on $F$. Then $\mathcal S_{\sE}$ is a Cohen Macaulay curve equipped with a torsion-free sheaf
$G$
fitting into
\begin{equation}\label{jump}0\to B \to\sO_{H}(-1,0)^{\oplus k}\oplus \sO_{H}(0,-1)^{\oplus k}\to  \sO_{H}^{\oplus 2k-2}\to i_*G\to  0.
\end{equation} where $B$ is a rank two torsion free sheaf on $H$.

\end{proposition}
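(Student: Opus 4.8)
The plan is to run the argument of Proposition~\ref{jumpingdivisor} once more, but with the relative Fourier--Mukai functor along the universal conic $\mathcal{C}\subset F\times\mathcal{H}$ replaced by a variant $\Psi$ adapted to \emph{second order} jumping. The guiding computation is that on a smooth conic $C\cong\pp^1$ one has $\sE_{|C}\cong\sO_C(m)\oplus\sO_C(-m)$, and $C\in\mathcal{S}_{\sE}$ exactly when $m\ge 2$, equivalently when $H^1(\sE_{|C}(-1,0))\ge 2$ (and symmetrically for the second factor); I would encode this as the non-surjectivity locus of a bundle map on $\mathcal{H}$, so that $\mathcal{S}_{\sE}$ appears as the support of a higher direct image of $\sE$. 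Concretely I would apply $\Psi$ to the monad (\ref{mon2}) of $\sE$, twisting the kernel $\sO_{\mathcal{C}}$ so that the intermediate term $\sG_1(-1,0)^{\oplus k}\oplus\sG_2(0,-1)^{\oplus k}$ is annihilated while the outer terms survive; the reducible conics are treated simultaneously because (\ref{uni}) is written in families over all of $\mathcal{H}$.

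First I would compute $R^\bullet\Psi$ of each constituent line bundle by tensoring the resolution (\ref{uni}) of $\sO_{\mathcal{C}}$ with the appropriate twist, pushing forward, and reading the fibrewise cohomology off Proposition~\ref{pLineBundle}; the tautological sequences (\ref{canonicaltang}) let me reduce the $\sG_i$ to line bundles. The target of this bookkeeping is to check that the middle term transforms to zero and that the two outer terms transform to $\sO_{\mathcal{H}}(-1,0)^{\oplus k}\oplus\sO_{\mathcal{H}}(0,-1)^{\oplus k}$ and to $\sO_{\mathcal{H}}^{\oplus 2k-2}$ respectively. Splitting (\ref{mon2}) into its two short exact sequences and chasing the long exact sequences of $R^\bullet\Psi$, exactly as in Proposition~\ref{jumpingdivisor}, then assembles the four term complex
$$0 \to B \to \sO_{\mathcal{H}}(-1,0)^{\oplus k}\oplus \sO_{\mathcal{H}}(0,-1)^{\oplus k}\xrightarrow{\ \delta\ } \sO_{\mathcal{H}}^{\oplus 2k-2}\to i_*G\to 0,$$
in which $\delta$ is the induced connecting map, $B=\kernel\delta$ and $i_*G=\coker\delta$.

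The remaining content is determinantal. Since $\delta$ goes from a bundle of rank $2k$ to one of rank $2k-2$ and is generically surjective, its kernel $B$ is a rank two torsion-free sheaf (a subsheaf of a locally free sheaf), and $i_*G=\coker\delta$ is supported precisely on $\mathcal{S}_{\sE}=\{\,\rk\delta\le 2k-3\,\}$, the vanishing of the maximal minors of $\delta$. For a map between bundles of these ranks the expected codimension of this degeneracy locus is $\bigl(2k-(2k-3)\bigr)\bigl((2k-2)-(2k-3)\bigr)=3$, so $\mathcal{S}_{\sE}$ is expected to be a curve in the fourfold $\mathcal{H}$; and a determinantal scheme of the expected codimension defined by maximal minors is Cohen--Macaulay by the Eagon--Northcott theory, carrying $G$ as a rank one torsion-free sheaf. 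The main obstacle is exactly the genericity input: one must show that for a \emph{generic} $k$-instanton the map $\delta$ is generically surjective and its degeneracy locus has the \emph{expected} codimension $3$ (neither empty nor of excess dimension), since only then do the Cohen--Macaulayness and the rank count for $B$ follow. I would establish this by producing a single instanton with the expected behaviour---building on the charge one bundles of Theorem~\ref{existencechargeone} and the inductive construction of Section~\ref{sec-ind}---and concluding by semicontinuity; this is the analogue on $F$ of the second kind jumping analysis of \cite{BH}. A secondary, purely computational hurdle is verifying that the cohomology bookkeeping of the first two steps yields exactly the stated bundles, in particular the vanishing of the contribution of the $\sG_i$.
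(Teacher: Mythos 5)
Your overall architecture is the right one and matches the paper's: push the monad (\ref{mon2}) forward along the universal conic, read off a four-term sequence from the display, and analyse the resulting map determinantally. Your Eagon--Northcott endgame (expected codimension $\bigl(2k-(2k-3)\bigr)\bigl((2k-2)-(2k-3)\bigr)=3$ in the fourfold $\mathcal H$, hence a Cohen--Macaulay curve, with the genericity to be supplied by one example plus semicontinuity) is in fact a more careful justification of the Cohen--Macaulayness than the paper itself records. But the central computational claim of your plan is wrong, and it is not a mere slip of wording. The correct functor is the \emph{untwisted} transform $\Phi=q_*p^*(-)$, and under it the term of the monad that is annihilated is the \emph{first} one, $\sO_F(-1,0)^{\oplus k}\oplus\sO_F(0,-1)^{\oplus k}$ (all cohomology of $\sO_F(-1,0)$ and $\sO_F(0,-1)$ tensored with the terms of (\ref{uni}) vanishes), while the \emph{middle} term survives: $R^0q_*p^*\bigl(\sG_1(-1,0)^{\oplus k}\oplus\sG_2(0,-1)^{\oplus k}\bigr)\cong\sO_{\mathcal H}(-1,0)^{\oplus k}\oplus\sO_{\mathcal H}(0,-1)^{\oplus k}$ (coming from $h^1(\sG_1(-2,0))=h^2(\sG_2(0,-2))=1$), and $R^0q_*p^*\sO_F^{\oplus 2k-2}\cong\sO_{\mathcal H}^{\oplus 2k-2}$. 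It is precisely because the first term dies that $R^i\Phi(K)\cong R^i\Phi(\sE)$, and because the middle and last terms live only in degree $0$ that the sequence $0\to K\to\sG_1(-1,0)^{\oplus k}\oplus\sG_2(0,-1)^{\oplus k}\to\sO_F^{\oplus 2k-2}\to 0$ yields the four-term complex of the statement with $B=R^0\Phi(\sE)$ and $i_*G=R^1\Phi(\sE)$. If instead you choose a twist that annihilates the middle term, as you propose, the long exact sequences of the display do not assemble into a map from the transform of the first term to the transform of the third (the connecting maps land in the wrong degrees), and you cannot recover the stated presentation.

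A related confusion feeds this error: you characterize $\mathcal S_{\sE}$ by $h^1(\sE_{|C}(-1,0))\ge 2$ and then say you will realize it as the support of a higher direct image. The support of $R^1q_*p^*(\sE(-1,0))$ is where $h^1(\sE_{|C}(-1,0))\ge 1$, which is the divisor $\mathcal D_{\sE}$ of Proposition \ref{jumpingdivisor}, not $\mathcal S_{\sE}$; a ``$h^1\ge 2$'' condition is a higher degeneracy locus, not a support. The correct observation is that for the \emph{untwisted} restriction, $\sE_{|C}\cong\sO_C(m)\oplus\sO_C(-m)$ has $h^1(\sE_{|C})=m-1$ for $m\ge 1$, so $h^1(\sE_{|C})\neq 0$ exactly when $m\ge 2$; this is what makes $\mathcal S_{\sE}$ the support of $R^1q_*p^*\sE$ and forces the untwisted transform, which in turn dictates which monad term is killed. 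Fix the choice of twist and redo the bookkeeping of which constituents vanish; the rest of your argument then goes through.
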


\begin{proof}
A conic $C$ belong to $S_{\sE}$ if and only if the point of $H$ corresponding to $C$ lies in the support of $R^1q_*p^*(\sE)$.\\Let us consider the Fourier-Mukai transform
$\Phi=q_*(p^*())$.
Let us apply $\Phi$ to the terms of the monad (\ref{mon2}). By (\ref{uni}) tensored by $\sO_F(-1,0)\boxtimes\sO_{H}(0,0)$, since the cohomology on $F$ is all zero we get  for  any $i$
$$R^iq_*((p^*(\sO_F))\cong 0.$$

By (\ref{uni}) tensored by $\sO_F(0,-1)\boxtimes\sO_{H}(0,0)$, since the cohomology on $F$ is all zero we get  for  any $i$
$$R^iq_*((p^*(\sO_F\otimes\sO_F(0,-1)))\cong 0.$$

 By (\ref{uni}) tensored by $\sO_F\boxtimes\sO_{H}(0,0)$, since the only non zero cohomology on $F$ is $h^0(\sO_F))=1$ we get $R^iq_*((p^*(\sO_F))=0$ for $i\not=0$ and
$$R^0q_*((p^*(\sO_F))\cong \sO_{H}.$$

 By (\ref{uni}) tensored by $\sG_1(-1,0)\boxtimes\sO_{H}(0,0)$, since the only non zero cohomology on $F$ is $h^1(\sG_1(-2,0))=1$ we get $R^iq_*((p^*(\sG_1(-1,0)))=0$ for $i\not=0$ and
$$R^0q_*((p^*(\sG_1(-1,0)))\cong \sO_{H}(-1,0).$$

 By (\ref{uni}) tensored by $\sG_2(0,-1)\boxtimes\sO_{H}(0,0)$, since the only non zero cohomology on $F$ is $h^2(\sG_2(0,-2))=1$ we get $R^iq_*((p^*(\sO_F(0,1)))=0$ for $i\not=0$ and
$$R^0q_*((p^*(\sG_2(0,-1)))\cong \sO_{H}(0,-1).$$

Now if we apply $\Phi$ to the sequence
$$0 \to \sO(0,-1)^{\oplus k}\oplus \sO(-1,0)^{\oplus k} \rightarrow  K
\rightarrow \sE\to 0$$  we get $R^iq_*((p^*(\sE))\cong R^iq_*((p^*(K))$ for any $i$.  From
$$0 \to K \rightarrow   \sG_1(-1,0)^{\oplus k}\oplus \sG_2(0,-1)^{\oplus k}
\xrightarrow{\beta} \sO^{\oplus 2k-2} \to 0$$  we get
$$0 \to R^0q_*((p^*(\sE))\to \sO_{H}(-1,0)^{\oplus k}\oplus\sO_{H}(0,-1)^{\oplus k} \xrightarrow{\gamma} \sO_{H}^{\oplus (2k-2)}.$$

So $\gamma$ is a $(2k-2)\times (2k)$ matrix made by a $(k-1)\times (k)$ matrix linear in the first variables, a $(k-1)\times (k)$ linear in the seconds variables. Hence $\ker(\gamma)$ is a rank two bundle and
$\coker(\gamma)$, which is $R^1q_*((p^*(\sE))$, is an extension to $H$ of a torsion free sheaf supported on the curve $\mathcal S_\sE$.

\end{proof}

\bibliographystyle{amsplain}

\end{document}